\theoremstyle{plain}
\newtheorem{theorem}{Theorem}[section]
\newtheorem{lemma}[theorem]{Lemma}
\newtheorem{proposition}[theorem]{Proposition}
\newtheorem{corollary}[theorem]{Corollary}
\newtheorem{claim}{Claim}
\theoremstyle{definition}
\newtheorem{remark}[theorem]{Remark}
\newtheorem{example}{Example}
\newcommand{\Out}{\mathrm{Out}}
\renewcommand{\S}{\mathcal{S}}
\newcommand{\FF}{\mathcal{FF}}
\newcommand{\F}{\mathbb{F}}
\newcommand{\K}{\mathcal{K}}
\newcommand{\X}{\mathcal{X}}
\newcommand{\Mod}{\mathrm{Mod}}
\newcommand{\fact}{\textsc{fact}}
\newcommand{\syl}{\mathrm{syl}}
\begin{document}

\title{Right-angled Artin groups and Out$(\mathbb{F}_n)$ I: \\ quasi-isometric embeddings}
\author{Samuel J. Taylor  
\\
\small \texttt{staylor@math.utexas.edu}\\
} 
\maketitle


\begin{abstract}
We construct quasi-isometric embeddings from right-angled Artin groups into the outer automorphism group of a free group. These homomorphisms are in analogy with those constructed in \cite{CLM}, where the target group is the mapping class group of a surface. Toward this goal, we develop tools in the free group setting that mirror those for surface groups as well as discuss various analogs of subsurface projection; these may be of independent interest. 
\end{abstract}

\section{Introduction}
For a finite simplicial graph $\Gamma$ with vertex set $\Gamma^0$, the right-angled Artin group $A(\Gamma)$ is the group presented with generators $s_i \in \Gamma^0$ and relators $[s_i,s_j] =1$ when  $s_i$ and $s_j$ are joined by an edge in $\Gamma$. Such groups, though simple to define, have been at the center of recent developments in geometric group theory and low-dimensional topology because of both the richness of their subgroups (and their residual properties) and the frequency with which they  occur as subgroups of other geometrically significant groups. For example, Wang constructed injective homomorphisms from certain right-angled Artin groups into SL$_n(\mathbb{Z})$, for $n \ge 5$ \cite{Wang}. In \cite{KapRAAGs}, Kapovich proved that for any finite simplicial graph $\Gamma$ and any symplectic manifold $(M,\omega)$, $A(\Gamma)$ embeds into the group of Hamiltonian symplectomorphisms of $(M, \omega)$. Koberda showed that in the mapping class group of a surface raising any collections of mapping classes that are supported on connected subsurfaces to a suitably high power generates a right-angled Artin group \cite{Kobraag}.  In \cite{CLM}, the authors constructed quasi-isometric embeddings of right-angled Artin groups into mapping class groups using partial pseudo-Anosov mapping classes supported on either disjoint or overlapping subsurfaces, depending on whether the mapping classes represented commuting or non-commuting vertex generators. Specifically, they prove the following:

\begin{theorem}[Theorem 1.1 of \cite{CLM}]
Suppose that $f_1, \ldots f_n \in \Mod(S)$ are fully supported on disjoint or overlapping non-annular subsurfaces. Then after raising to sufficiently high powers, the elements generate a quasi-isometrically embedded right-angled Artin subgroup of $\Mod(S)$. Furthermore, the orbit map to  Teichm\"{u}ller space is a quasi-isometric embedding. 
\end{theorem}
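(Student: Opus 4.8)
\emph{Proof strategy.} The plan is to fix the (connected, non-annular) supporting subsurfaces $W_i$ of the $f_i$, let $\Gamma$ be the graph recording which pairs $W_i,W_j$ are disjoint, replace each $f_i$ by a uniformly large power $f_i^N$, and show that the resulting homomorphism $\phi_N\colon A(\Gamma)\to\Mod(S)$ --- well defined for every $N$ because disjoint supports commute --- together with its orbit map $w\mapsto\phi_N(w)\cdot X_0$ into Teichm\"uller space $\mathcal{T}(S)$ is a quasi-isometric embedding once $N$ is large enough; injectivity of $\phi_N$ is then automatic. Fix a basepoint $X_0\in\mathcal{T}(S)$ and a complete marking $\mu_0$ on $S$. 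The upper bounds are routine: writing $w$ in a reduced $A(\Gamma)$ normal form $s_{i_1}^{a_1}\cdots s_{i_k}^{a_k}$ one has the standard identity $|w|_{A(\Gamma)}\asymp\sum_j|a_j|$, and then $d_{\Mod(S)}(1,\phi_N(w))$ and $d_{\mathcal{T}(S)}(X_0,\phi_N(w)X_0)$ are each at most a constant times $\sum_j|a_j|$ by the triangle inequality (the orbit map of $\Mod(S)$ into $\mathcal{T}(S)$ being coarsely Lipschitz). The work is the matching lower bound $\sum_j|a_j|\preceq d_{\mathcal{T}(S)}(X_0,\phi_N(w)X_0)$; since Rafi's distance formula for $\mathcal{T}(S)$ and the Masur--Minsky distance formula for $\Mod(S)$ both dominate the term $\sum_{W\text{ non-annular}}\big[d_W(\mu_0,\phi_N(w)\mu_0)\big]_K$, it suffices to bound this sum below by $\sum_j|a_j|$.

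The heart of the argument is a subsurface projection estimate. For a reduced normal form $w=s_{i_1}^{a_1}\cdots s_{i_k}^{a_k}$, set $g_j=\phi_N(s_{i_1}^{a_1}\cdots s_{i_{j-1}}^{a_{j-1}})$ and $h_j=\phi_N(s_{i_{j+1}}^{a_{j+1}}\cdots s_{i_k}^{a_k})$, and let $V_j=g_j\cdot W_{i_j}$. By equivariance of subsurface projection and since $\phi_N(w)=g_j\,f_{i_j}^{Na_j}\,h_j$, the quantity $d_{V_j}(\mu_0,\phi_N(w)\mu_0)$ equals $d_{W_{i_j}}\!\big(\pi_{W_{i_j}}(g_j^{-1}\mu_0),\,f_{i_j}^{Na_j}\pi_{W_{i_j}}(h_j\mu_0)\big)$. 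I would show that, for $N$ exceeding a threshold depending only on the $f_i$: (i) each of the sets $\pi_{W_{i_j}}(g_j^{-1}\mu_0)$ and $\pi_{W_{i_j}}(h_j\mu_0)$ lies in a subset of $\mathcal{C}(W_{i_j})$ of uniformly bounded diameter, and (ii) the subsurfaces $V_1,\dots,V_k$ are pairwise distinct. Granting (i) and (ii): since a partial pseudo-Anosov acts loxodromically on its own curve complex, $f_{i_j}^{Na_j}$ translates by at least $\asymp N|a_j|$, so $d_{V_j}(\mu_0,\phi_N(w)\mu_0)\ge cN|a_j|-c$ with $c>0$ independent of $w$; feeding the distinct subsurfaces $V_j$ into the distance formula gives $d_{\mathcal{T}(S)}(X_0,\phi_N(w)X_0)\succeq\sum_j N|a_j|\succeq|w|_{A(\Gamma)}$ as soon as $cN$ beats the threshold $K$ and the multiplicative constant of the distance formula, and likewise for $\Mod(S)$.

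Claim (i) is exactly where the disjoint/overlapping hypothesis enters. To change $\pi_{W_{i_j}}$ by more than a universal amount one must apply some $f_\ell$ whose support is not disjoint from $W_{i_j}$ --- i.e. a generator $s_\ell$ non-adjacent to $s_{i_j}$ in $\Gamma$, whence $W_\ell$ \emph{overlaps} $W_{i_j}$. If such a generator occurs ``outermost'' among the syllables flanking position $j$ (on either side), then because $f_\ell^{\pm Na_\ell}$ fills $W_\ell$ deeply, the Behrstock inequality --- for overlapping $W,V$ and any marking $\mu$, $\min\{d_W(\partial V,\mu),\,d_V(\partial W,\mu)\}$ is uniformly bounded --- forces the projection to collapse onto the bounded set $\pi_{W_{i_j}}(\partial W_\ell)$, propagating inward to the stated uniform bound. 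Reducedness of the normal form is precisely what prevents that outermost interacting generator from being $s_{i_j}$ itself: everything strictly between position $j$ and a closer occurrence of $s_{i_j}$ would commute with $s_{i_j}$ and could be shuffled past and merged with $s_{i_j}^{a_j}$, contradicting minimality. I expect this bookkeeping --- making the ``outermost interacting syllable'' analysis precise with constants independent of $w$, organizing the finitely many active subsurfaces into a consistent Masur--Minsky-style ordering by repeated use of the Behrstock inequality, and extracting the pairwise distinctness in (ii) --- to be the main obstacle; it is the analog of showing that the large projections predicted by a reduced RAAG word genuinely survive in $\phi_N(w)$ and do not cancel.

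Once the estimate of (i)--(ii) is in hand, the theorem follows: $|w|_{A(\Gamma)}\preceq d_{\Mod(S)}(1,\phi_N(w))$ comes from the Masur--Minsky distance formula restricted to a subset of the subsurfaces $V_j$, and $|w|_{A(\Gamma)}\preceq d_{\mathcal{T}(S)}(X_0,\phi_N(w)X_0)$ comes identically from the non-annular terms of Rafi's distance formula; combined with the elementary upper bounds of the first paragraph this yields $|w|_{A(\Gamma)}\asymp d_{\Mod(S)}(1,\phi_N(w))\asymp d_{\mathcal{T}(S)}(X_0,\phi_N(w)X_0)$, hence the asserted quasi-isometric embeddings (and, a fortiori, injectivity of $\phi_N$), completing the proof.
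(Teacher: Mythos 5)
This statement is quoted from \cite{CLM} and is not proved in the present paper, but your outline reproduces the CLM strategy faithfully---and it is the same template the paper follows for its $\Out(\F_n)$ analogue: active subsurfaces $V_j$, uniform boundedness of the two flanking projections by induction on syllable length together with Behrstock's inequality applied to the outermost non-commuting syllable (with reducedness of the normal form guaranteeing that syllable is not $s_{i_j}$ itself), time-ordering and hence distinctness of the $V_j$, and finally the Masur--Minsky and Rafi distance formulas for the lower bound. The only structural divergence worth flagging is that in the free-group setting the paper must replace your final distance-formula step by the interval-counting argument along a geodesic in the spine (Lemmas \ref{order} and \ref{lowerbound}), since no Masur--Minsky-type formula is available for $\Out(\F_n)$; in the surface setting your appeal to those formulas is exactly what Clay--Leininger--Mangahas do.
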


\begin{corollary}[Corollary 1.2 of \cite{CLM}]
Any right-angled Artin group admits a homomorphism to some mapping class group which is a quasi-isometric embedding, and for which the orbit map to Teichm\"{u}ller space is a quasi-isometric embedding.
\end{corollary}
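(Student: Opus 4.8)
The plan is to deduce the corollary from the preceding theorem by realizing, for an arbitrary finite simplicial graph $\Gamma$ with vertices $s_1,\dots,s_n$, a closed surface $S$ together with connected essential non-annular subsurfaces $W_1,\dots,W_n\subset S$ whose \emph{disjointness pattern} is exactly $\Gamma$: that is, $W_i$ and $W_j$ can be isotoped to be disjoint when $\{s_i,s_j\}$ is an edge of $\Gamma$, and $W_i\pitchfork W_j$ (they overlap) otherwise. Once this arrangement is in hand, I would choose for each $i$ a partial pseudo-Anosov $f_i$ fully supported on $W_i$ and apply Theorem~1.1 of \cite{CLM}: after raising to a sufficiently high common power, the $f_i$ generate a quasi-isometrically embedded copy of $A(\Gamma)$ inside $\Mod(S)$ whose orbit map to Teichm\"uller space is a quasi-isometric embedding, which is precisely the statement of the corollary.

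To build the subsurface arrangement I would pass to the complement graph $\Gamma^{c}$ (same vertices; $\{s_i,s_j\}$ an edge of $\Gamma^{c}$ iff it is not an edge of $\Gamma$), so that the $W_i$ must overlap exactly along the edges of $\Gamma^{c}$. Assemble $S$ from two kinds of building block, each non-annular and each supporting a pseudo-Anosov: a \emph{private} piece $P_i$ (say a one-holed torus) for every vertex $s_i$, and a \emph{shared} piece $\Sigma_e$ (say a four-holed sphere) for every edge $e$ of $\Gamma^{c}$. For a vertex $s_i$, let $W_i$ be the connected subsurface obtained by gluing $P_i$ together with all the $\Sigma_e$ for edges $e\ni s_i$ of $\Gamma^{c}$, chained along disjoint boundary curves; arrange the gluings so that each $\Sigma_e$ with $e=\{s_i,s_j\}$ meets $W_i$ along one pair of its boundary curves and $W_j$ along the complementary pair. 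With the remaining identifications routed through auxiliary connecting tubes and some extra genus, $S:=\bigcup_i W_i$ (suitably capped off) becomes a single connected closed surface. Then each $W_i$ is connected, non-annular and essential; $W_i\cap W_j=\Sigma_e\neq\emptyset$ exactly when $\{s_i,s_j\}\in E(\Gamma^{c})$; the private pieces ensure that in that case neither $W_i$ nor $W_j$ is isotopic into the other, so $W_i\pitchfork W_j$; and when $\{s_i,s_j\}\in E(\Gamma)$ the two subsurfaces share no block and the layout keeps them disjoint. This is the arrangement the theorem requires.

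I expect the topological realization to be the only genuine obstacle, and two points within it call for care. First, one must verify that sharing a single block $\Sigma_e$ really does put $W_i$ and $W_j$ in the "overlapping" relation used by Theorem~1.1 — in particular that both subsurface projections $\pi_{W_i}(\partial W_j)$ and $\pi_{W_j}(\partial W_i)$ are defined; here the point is that the $W_i$-side boundary curves of $\Sigma_e$ are essential in $W_j$ and vice versa, and should a stronger filling condition be needed one can instead re-identify the two copies of $\Sigma_e$ so that $\partial W_i$ and $\partial W_j$ cross essentially. Second, one must route the gluings and add enough handles to make the union a single closed surface while preserving the prescribed disjointness pattern on the nose (for instance an isolated vertex of $\Gamma^{c}$ simply contributes its private piece $P_i$, disjoint from every other $W_k$ and joined to the rest of $S$ only through connecting tubes disjoint from all the $W_k$). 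With the arrangement established, the remaining steps — existence of a partial pseudo-Anosov on each non-annular $W_i$, and invoking Theorem~1.1 — are immediate.
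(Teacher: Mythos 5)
This corollary is quoted verbatim from \cite{CLM}; the paper does not prove it (nor Theorem 1.1 of \cite{CLM}) but only cites it as motivation, so there is no in-paper proof to measure you against. Your reduction is the standard and correct one: realize $\Gamma$ as the coincidence graph of connected, essential, non-annular subsurfaces that are pairwise disjoint or overlapping, choose a partial pseudo-Anosov fully supported on each, and invoke Theorem 1.1. Moreover, your building-block scheme --- a private piece for each vertex and a shared piece for each edge of the complement graph $\Gamma^c$ --- is exactly parallel to the construction this paper does carry out in its own setting (Section 4, where the support graph is built from the barycentric subdivision of $\Gamma^c$ with a subgraph per vertex and a shared nontrivial vertex group per edge of $\Gamma^c$), so I would count this as essentially the same approach. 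The only points needing care are the ones you already flag, and both are fine: the private pieces rule out nesting, the curves along which $\Sigma_e$ is attached are essential and non-peripheral in the opposite subsurface so the two subsurfaces cannot be isotoped apart (which is all that ``overlapping'' requires in \cite{CLM} --- no stronger filling condition is needed), and leftover boundary is better absorbed by tubes or genus disjoint from all the $W_k$ than by capping with disks.
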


In this paper, we develop the theory necessary to quasi-isometrically embed right-angled Artin groups into Out$(\mathbb{F}_n)$, in analogy with \cite{CLM} for the mapping class group. Here, we show the following (see Section \ref{homomorphisms} for definitions and a more general statement):

\begin{theorem}
Suppose that $f_1, \ldots f_n \in \Out(\F_n)$ are fully supported on an admissible collection of free factors. Then after raising to sufficiently high powers, the elements generate a quasi-isometrically embedded right-angled Artin subgroup of $\Out(\F_n)$. Furthermore, the orbit map to Outer space is a quasi-isometric embedding.
\end{theorem}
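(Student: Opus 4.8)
\emph{We outline the proof.} The plan is to transpose the argument of \cite{CLM} to $\Out(\F_n)$, with the free factor complexes $\FF(A_i)$ of the supporting free factors playing the role of the curve complexes of the supporting subsurfaces, and with a hands-on substitute for the Masur--Minsky distance formula. Admissibility of the collection $\{A_i\}$ arranges that whenever $s_i$ and $s_j$ are joined by an edge of $\Gamma$ the factors $A_i$ and $A_j$ lie in a common free factor system, so $f_i$ and $f_j$ commute in $\Out(\F_n)$; hence for every $N$ the assignment $s_i \mapsto f_i^{N}$ extends to a homomorphism $\phi_N \colon A(\Gamma) \to \Out(\F_n)$. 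The content is that $\phi_N$ is a quasi-isometric embedding for $N$ large; since a quasi-isometric embedding of a group is automatically injective and $A(\Gamma)$ is torsion-free, this also yields faithfulness. The upper bound $d_{\Out(\F_n)}(1,\phi_N(w)) \preceq |w|_{A(\Gamma)}$ is immediate from the word metrics, and because the orbit map $\Out(\F_n) \to \mathcal{X}$ to Outer space $\mathcal{X}$ is coarsely Lipschitz, it suffices for both conclusions to bound $d_{\mathcal{X}}(x_0, \phi_N(w)\cdot x_0)$ below by a linear function of $|w|_{A(\Gamma)}$ for a fixed basepoint $x_0$.

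The main device I would build is a theory of \emph{subfactor projections}. To each free factor $A$ one assigns a coarsely well-defined map $\pi_A$ from free factors in general position with $A$, and from points of $\mathcal{X}$, into the Gromov hyperbolic complex $\FF(A)$, and establishes: (i) $\pi_A$ is coarsely Lipschitz on $\mathcal{X}$; (ii) if $f$ is fully supported on $A$ then $f$ acts on $\FF(A)$ as a loxodromic isometry, so $d_{\FF(A)}(\pi_A(x), \pi_A(f^{k}\cdot x)) \succeq |k|$ for $x \in \mathcal{X}$; (iii) a Behrstock-type inequality: if $A$ and $B$ overlap, then $\min\{ d_{\FF(A)}(B,C),\, d_{\FF(B)}(A,C)\}$ is bounded by a uniform constant for every $C$ in general position with both; and (iv) the projection-complex axioms of Bestvina--Bromberg--Fujiwara, with the attendant bounded geodesic image statement, for the family $\{ g\cdot A_i : g \in \phi_N(A(\Gamma))\}$ once $N$ is sufficiently large. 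Properties (i) and (ii) should follow from known hyperbolicity and Lipschitz-projection results for free factor complexes; (iii) and (iv) are the substantive new input.

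Granting these, the lower bound runs as in the proof of a distance formula. For a reduced word $w = s_{i_1}^{a_1}\cdots s_{i_\ell}^{a_\ell}$, attach to the $j$-th syllable the free factor $B_j$ obtained by applying the prefix $\phi_N(s_{i_1}^{a_1}\cdots s_{i_{j-1}}^{a_{j-1}})$ to $A_{i_j}$. Property (ii) produces a projection of size $\succeq N|a_j|$ in $\FF(B_j)$ between $x_0$ and $\phi_N(w)\cdot x_0$; admissibility together with the Behrstock inequality (iii) forces the $B_j$ to be pairwise unlinked in the appropriate order, and the projection-complex machinery (iv) then guarantees that each of these large projections is genuinely witnessed along every geodesic from $x_0$ to $\phi_N(w)\cdot x_0$. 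Summing the contributions and invoking (i) gives $d_{\mathcal{X}}(x_0, \phi_N(w)\cdot x_0) \succeq N\sum_j |a_j| \succeq |w|_{A(\Gamma)}$, which establishes both that $\phi_N$ is a quasi-isometric embedding of $A(\Gamma)$ into $\Out(\F_n)$ and that the orbit map to Outer space is one. The largeness of $N$ is used to make each syllable's contribution exceed the thresholds in the distance estimate and to dominate the additive errors coming from (iii) and (iv).

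The principal obstacle is the absence of a ready-made distance formula for $\Out(\F_n)$: the lower bound cannot be quoted and must be reassembled from hyperbolicity of the $\FF(A)$ and the combinatorics of subfactor projections. The delicate points are defining $\pi_A$ coherently on all the relevant objects — free factors may be nested, complementary, or overlapping in configurations with no direct surface analog, which is precisely what the admissibility hypothesis is designed to tame — and proving the Behrstock inequality (iii), whose surface counterpart relies on the fine structure of the curve complex. I expect (iii), together with the verification that translates of the $A_i$ satisfy the projection-complex axioms, to be the crux, requiring hyperbolicity of $\FF(A)$ and a careful analysis of how a single splitting or conjugacy class can simultaneously project to two overlapping free factors.
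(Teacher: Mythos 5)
Your outline follows the same architecture as the paper — subfactor projections $\pi_{A}$ to the free factor complexes $\mathcal{F}(A_i)$ of the supporting factors, a Behrstock inequality for overlapping factors, a CLM-style induction to produce a projection of size $\gtrsim N|a_j|$ for each syllable, and a summation argument for the lower bound — but it diverges at the final assembly step, and that divergence matters. Where you invoke the Bestvina--Bromberg--Fujiwara projection-complex axioms and a bounded geodesic image statement for the family $\{g\cdot A_i\}$, the paper uses a more elementary Masur--Minsky ``footprint'' argument: to each active factor $A$ with $d_A(T,\phi(g)T)\ge K$ one attaches a nonempty subinterval $I_A$ of a geodesic in the spine $\K_n$ (resp.\ a folding path in $\X_n$), using only that $\pi_A$ is coarsely Lipschitz; the Behrstock inequality plus the syllable order force the intervals of $\prec$-ordered syllables to be disjoint and correctly ordered, at most $s(\Gamma)$ intervals can overlap at any point, and summing interval lengths gives $\sum_i d_{A^g(x_i^{e_i})}(T,\phi(g)T)\le 5sL\cdot d_{\K}(T,\phi(g)T)$. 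This needs neither the BBF finiteness axiom nor bounded geodesic image, neither of which is verified in the paper (and the finiteness axiom for these projections is genuinely nontrivial — the paper explicitly notes that the Bestvina--Feighn projections $\pi^{\mathrm{BF}}_{\S(A)}(B)$ fail to be well defined precisely for the overlapping factors that arise here). So your route (iv) is a heavier and, as written, unsupported lift; the interval argument is the intended substitute for the missing distance formula.

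Two smaller points. First, commutativity of $f_i$ and $f_j$ for disjoint $A_i, A_j$ does \emph{not} follow from admissibility of the collection alone: being supported on $A_i$ only constrains $f_i$ on the factors disjoint from $A_i$, not on all of $\F_n$, so the paper builds commutativity into the definition of an admissible system (it becomes automatic only in the support-graph construction). Second, the statement that property (ii) ``produces a projection of size $\succeq N|a_j|$ in $\mathcal{F}(B_j)$ between $x_0$ and $\phi_N(w)\cdot x_0$'' is not a consequence of loxodromicity alone; it is exactly the inductive step of Theorem \ref{factordistance}, where one writes $\phi(g)=abg_ic$ and uses the Behrstock inequality together with the induction hypothesis to show that the prefix and suffix each contribute at most $K/2$ to $d_{B_i}$. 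Your sketch compresses this into (ii), but it is where most of the work — and the choice of the threshold $K$ — lives.
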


Recall that \emph{Outer space} $\X_n$, introduced in \cite{CVouter}, is the space of $\F_n$-marked metric graphs, and the distance being considered is the asymmetric Lipschitz metric of \cite{FMout} (see Section \ref{appen}). The \emph{admissible collection} condition on the set of free factors in our main theorem is meant to mimic the surface case where the subsurfaces considered are either disjoint or overlap. It should be noted that similar to the mapping class group case our conditions for generating a right-angled Artin group involve translation lengths of the outer automorphims on the free factor complexes of their supporting free factors. Further, if $\Gamma$ is the ``coincidence graph'' for the involved free factors, then the right-angled Artin group generated is $A(\Gamma)$. This is made precise in Section \ref{homomorphisms}. We also obtain,

\begin{corollary}
Any right-angled Artin group admits a homomorphism to $\Out(\F_n)$, for some $n$, which is a quasi-isometric embedding, and for which the orbit map to Outer space is a quasi-isometric embedding.
\end{corollary}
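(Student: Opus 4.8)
The plan is to deduce the corollary from the main theorem by exhibiting, for an arbitrary right-angled Artin group $A(\Gamma)$, a free group $\F_n$ and a tuple $f_1,\ldots,f_n \in \Out(\F_n)$ fully supported on an admissible collection of free factors whose coincidence graph is exactly $\Gamma$. This is the direct analog of how Corollary 1.2 of \cite{CLM} follows from Theorem 1.1: one must realize the combinatorics of $\Gamma$ geometrically. First I would recall the precise definition of an \emph{admissible collection} from Section \ref{homomorphisms} — roughly, a finite collection $\{A_i\}$ of proper free factors of $\F_n$ such that any two are either ``disjoint'' (contained in complementary free factors, so the corresponding automorphisms commute) or ``overlapping'' in the appropriate sense (neither contained in nor disjoint from the other, after passing to a common free splitting). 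The coincidence graph $\Gamma$ has a vertex for each $A_i$ and an edge joining $A_i,A_j$ precisely when they are disjoint. So the task is purely to build, from the graph $\Gamma$, a realizing collection of free factors.

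The construction I would carry out is the free-group mirror of the surface construction. Take a rose or a suitable graph $R$ with $\pi_1(R) = \F_n$ for $n$ large enough, and for each vertex $s_i$ of $\Gamma$ choose a connected subgraph $G_i \subseteq R$ (or, more flexibly, a free factor system supported on a subgraph) so that $\pi_1(G_i) =: A_i$ is a proper free factor; arrange that $G_i$ and $G_j$ are disjoint subgraphs precisely when $s_i$ and $s_j$ are adjacent in $\Gamma$, and that when $s_i,s_j$ are non-adjacent the subgraphs $G_i, G_j$ overlap (share an edge or are linked so that the free factors overlap in the admissible sense). Concretely one can assign to each non-edge of $\Gamma$ a private loop that both of the two relevant subgraphs traverse, and to each vertex a private loop of its own to guarantee the factors are nontrivial and proper; a rank count then fixes $n$. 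On each $A_i$ choose a fully irreducible $\phi_i \in \Out(A_i)$ (these exist in any free group of rank $\geq 2$) and let $f_i$ be the outer automorphism of $\F_n$ that acts as $\phi_i$ on $A_i$ and trivially on a complementary free factor — i.e., $f_i$ is fully supported on $A_i$. By construction $\{A_i\}$ is admissible with coincidence graph $\Gamma$, so the main theorem applies: after raising to sufficiently high powers the $f_i$ generate a quasi-isometrically embedded copy of $A(\Gamma)$ in $\Out(\F_n)$, with the orbit map to $\X_n$ a quasi-isometric embedding.

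The main obstacle I expect is verifying the overlapping condition for non-adjacent vertices of $\Gamma$, not the disjointness condition. Making two free factors genuinely ``overlap'' in whatever sense Section \ref{homomorphisms} requires — so that the subfactor-projection machinery gives the needed lower bounds on word length — is more delicate than in the surface setting, where two subsurfaces sharing some intersection pattern is visually transparent; in the free-group world one has to be careful that $A_i$ and $A_j$ are not accidentally contained in a common proper free factor and not simultaneously carried by a common free splitting in a way that would make them ``disjoint.'' I would handle this by choosing the $G_i$ so that the union of any two non-adjacent ones already fills a subgraph of full rank-contribution relative to a splitting, e.g.\ by having the shared loop be essential for both and by having each $G_i$ also carry at least one edge disjoint from $G_j$; a short lemma (or an appeal to the characterization of overlapping already in Section \ref{homomorphisms}) then certifies admissibility. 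A secondary, purely bookkeeping point is to make $n$ explicit: if $\Gamma$ has $v$ vertices and $e$ edges then $\bar e = \binom{v}{2} - e$ non-edges, and the rose construction above uses on the order of $v + \bar e$ petals, so one may take $n = v + \bar e$ (or smaller with a more economical graph), and one notes that $A(\Gamma)$ embeds in $\Out(\F_n)$ for this and hence, by the obvious inclusions $\Out(\F_n) \hookrightarrow \Out(\F_m)$ for $m \ge n$ which are themselves quasi-isometric embeddings onto their images only up to the usual care, one records the statement for ``some $n$'' as in the corollary.
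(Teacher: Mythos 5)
Your proposal is correct and is essentially the paper's own argument: the paper realizes $\Gamma$ by a support graph built from the barycentric subdivision of the complement graph $\Gamma^c$, with an infinite cyclic vertex group at each subdivision vertex playing exactly the role of your ``private loop for each non-edge'' (and extra cyclic groups at valence-one vertices playing the role of your per-vertex private loops), then applies Corollary \ref{supportgraphversion}. The only cosmetic differences are the precise rank bookkeeping (the paper gets $n \le c(\Gamma)$) and your closing aside about inclusions $\Out(\F_n) \to \Out(\F_m)$, which is not needed since the statement only requires some $n$.
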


We remark that although much of the inspiration for this paper is drawn from \cite{CLM}, there are several significant points of departure. First, as opposed to subsurface projections in the mapping class group situation, when working with $\Out(\mathbb{F}_n)$ there are different possible projections that one could employ. In \cite{BFproj}, the authors define the projection of the free factor $A$ to the free splitting complex of the free factor $B$ when the two factors are  in ``general position.'' These projections, though powerful in other settings, are not delicate enough for our application. In particular, the presence of commuting outer automorphisms in our construction precludes free factors from satisfying the conditions for finite diameter Bestvina-Feighn projections, as would be required.  In \cite{SS}, a different sort of projection is considered; the authors project a sphere system in $M_n$, the doubled handlebody of genus $n$, to the disk and sphere complex of certain submanifolds. Though interesting in their own right, these projections do not always give free splittings of free factors and so are not used in this paper. These difficulties are discussed in detail in Section \ref{relations}. To resolve these issues we develop our own projections which are tailored for the applications in this paper. In the process, we demonstrate the relationship between the projections of \cite{BFproj} and \cite{SS}, answering a question appearing in both papers.

Second, unlike in the mapping class group case, there are no immediate analogs of the Masur-Minsky formulas, giving coarse estimates of distance in $\Mod(S)$, that apply to our construction. Instead, in Section \ref{lower} we use the partial ordering on the syllables of a word in the standard generators of $A(\Gamma)$ to control the order in which distance can be made when projecting a geodesic in $\Out(\mathbb{F}_n)$ to the free factor complex of a free factor. This suffices for providing the lower bounds on $\Out(\mathbb{F}_n)$-distance in our main theorem.

Finally, we note that there is another method to construct quasi-isometrically embedded right-angled Artin subgroups of $\Out(\F_{2g})$. One could start with a once punctured genus $g$ surface $\dot{S}$ and using the methods of \cite{CLM} build a quasi-isometric embedding from $A(\Gamma)$ into $\Mod(\dot{S})$. In \cite{HamHen}, the authors show that the injective homomorphism $\Mod(\dot{S}) \to \Out(\F_{2g})$, induced by the action of $\Mod(\dot{S})$ on $\pi_1(\dot{S})= \F_{2g}$, is itself a quasi-isometric embedding. Composing two such maps then gives a quasi-isometric embedding from $A(\Gamma)$ into $\Out(\F_{2g})$. These homomorphisms, however, have the property that they factor through mapping class groups and, hence, fix the conjugacy class in $\F_{2g}$ corresponding to the puncture. In our approach, homomorphisms into $\Out(\F_n)$ do not factor through mapping class groups and outer automorphism are supported on free factors, rather than cyclic factors. Further, our construction produces homomorphisms into $\Out(\F_n)$ that have quasi-isometric orbit maps into Outer space.

\subsection{Plan of paper}
The paper is organized as follows: Section $2$ covers background that is needed throughout the paper. Section $3$ defines the subfactor projections that we use, gives their basic properties, and relates them to the projections of \cite{BFproj}. Section $4$ defines the homomorphisms of right-angled Artin groups into $\Out(\F_n)$ that are of interest, gives a precise statement of our main theorem, and provides a few examples. 

Central to the proof of our main theorem is a version of Behrstock's inequality which controls the projection of a tree into the free factor complex of two free factors that \emph{overlap}. To prove this, we work with a topological model of the projections developed in Section $5$. This approach has the additional advantage that the methods developed in this section can be used to relate various notions of projection that exist in the literature. In Section $6$, we prove the version of Behrstock's inequality that is needed. This is followed by Sections $7$ and $8$ which give related notions of order for both free factors and syllables of $g \in A(\Gamma)$, respectively. This is in preparation for Section $9$ which closely follows the arguments of \cite{CLM} and gives conditions when normal form words in $A(\Gamma)$ give large projections to the free factor complex of (certain) free factors.

Having arranged large projection distances, the last step is to argue that for ``non-disjoint'' free factors these distances independently contribute to distance in $\Out(\F_n)$; this is done in Section $10$. This section can be thought of as making up for the lack of lower bounds coming from Masur-Minsky type formulas. The proof that our homomorphisms are quasi-isometric embeddings into $\Out(\F_n)$ is then concluded in Section $11$. Since showing that these homomorphisms give quasi-isometric orbit maps into Outer space involves different terminology and constants, this is done in the appendix.

\subsection{Acknowledgments} 

The author is grateful to Patrick Reynolds, Chris White, and Nick Zufelt for helpful conversation. Thanks are also due to Chris Leininger and Chris Westenberger, who made constructive comments on an earlier version of this paper. Most importantly, the author is indebted to his advisor Alan Reid as well as Hossein Namazi for each's encouragement, advice, and continuous feedback throughout this project.

\section{Background}

\subsection{Quasi-isometries}
Let $(X,d_X)$ and $(Y,d_Y)$ be metric spaces. A map $f: X \to Y$ is a \emph{$(K,L)$ - quasi-isometric embedding} if for all $x_1,x_2 \in X$
$$\frac{1}{K}d_X(x_1,x_2) -L \le d_Y(f(x_1),f(x_2)) \le K d_X(x_1,x_2) +L. $$
In addition, if every point in $Y$ is within distance $L$ from the image $f(X)$, then $f$ is a \emph{quasi-isometry} and $X$ and $Y$ are said to be \emph{quasi-isometric}. In this paper, the metric spaces of interest arise from finite dimensional simplicial complexes where a fixed complex is considered with the metric induced by giving each simplex the structure of a Euclidean simplex with side length one. Recall that if $K$ is a finite dimensional simplicial complex, then this piecewise Euclidean metric on $K$ is quasi-isometric to $K^1$, the $1$-skeleton of $K$, with its standard graph metric \cite{BH}. Since we are interested in the coarse geometry of such complexes, i.e. their metric structure up to quasi-isometry, this justifies our convention of when working with a complex $K$ to consider only the graph metric on $K^1$, rather than the entire complex. Here, and below, a \emph{graph} is a $1$-dimensional CW complex and a simply connected graph is a \emph{tree}. 

\subsection{$\Out(\mathbb{F})$ basics}\label{basics}
Fix $n \ge 2$ and let $\mathbb{F}_n$ denote the free group of rank $n$ and $\Out(\mathbb{F}_n)$ its group of outer automorphisms. When clear from context, the subscript $n$ will be dropped from the notation. In this section we recall the definition and basic facts concerning some relevant $\Out(\mathbb{F)}$-complexes. First, a \emph{splitting} of $\mathbb{F}$ is a minimal, simplicial actions $\mathbb{F} \curvearrowright T$ on a non-trivial simplicial tree, the action being determined by a homomorphism $\psi: \F \to \mathrm{Aut}(T)$ into the simplicial automorphisms of $T$. An action on a tree is \emph{minimal} if there is no proper invariant subtree. By a \emph{free splitting}, we mean a splitting with trivial edge stabilizers and refer to a \emph{$k$-edge splittings} as a free splittings with $k$ natural edge orbits. From Bass-Serre theory, $k$-edge splittings correspond to graph of groups decompositions of $\mathbb{F}$ with $k$ edges, each edge with trivial edge group. Two actions $\mathbb{F} \curvearrowright T$ and $\mathbb{F} \curvearrowright T'$ are \emph{conjugate} if there is a $\mathbb{F}$-equivariant homeomorphism $\chi:T \to T'$ and the conjugacy class of an action is denoted by $[\mathbb{F} \curvearrowright T]$. We will usually drop the action symbol from the notation and refer to the splitting by $T$. Finally, an equivariant surjection $c: T \to T'$ between $\mathbb{F}$-trees is a \emph{collapse map} if all point preimages are connected. In this case, $T$ is said to be a refinement of $T'$.

The \emph{free splitting complex} $\mathcal{S}_n$ of the free group $\mathbb{F}_n$ is the simplicial complex defined as follows (see \cite{HMsplit} for details): The vertex set $\mathcal{S}_n^0$ is the set of conjugacy classes of $1$-edge splittings of $\mathbb{F}_n$, and $k+1$ vertices $[T_0], \ldots, [T_k]$ determine a $k$-simplex of $\mathcal{S}_n$ if there is a $(k+1)$-edge splitting $T$ and collapse maps $c_i: T \to T_i$, for each $i=0,\ldots,k$. That is, a collection of vertices span a simplex in $\S_n$ if they have a common refinement. We will mostly work with the barycentric subdivision of the free splitting complex, denoted by $\mathcal{S}'_n$, whose vertices are conjugacy classes of free splittings of $\mathbb{F}_n$ and two vertices are joined by an edge if, up to conjugacy, one refines the other. Higher dimensional simplicies are determined similarly.

For $n\ge 3$, the \emph{free factor complex} $\mathcal{FF}_n$ of $\mathbb{F}_n$ is the simplicial complex defined as follows (see \cite{HVff} or \cite{BFhyp} for details): The vertices are conjugacy classes of free factors of $\mathbb{F}_n$ and $k+1$ conjugacy classes $[A_0], \ldots, [A_n] $ span a $k$-simplex if there are representative free factors in these conjugacy classes with $A_0 \subset A_1 \subset \ldots \subset A_n$. When $n=2$, the definition is modified so that $\FF_2$ is the standard Farey graph. In this case, vertices of $\FF_2$ are conjugacy classes of rank $1$ free factors and two vertices are jointed by an edge if there are representatives in these conjugacy classes that form a basis for $\F_2$.

$\Out(\mathbb{F}_n)$ acts simplicially on these complexes. For $\mathcal{FF}$, if $f \in \Out(\mathbb{F})$ is represented by an automorphism $\phi$, we define $f[A] =[\phi A]$. It is clear that this is independent of choice of $\phi$ and extends to a simplical action on all of $\mathcal{FF}$. For $\mathcal{S}$ the action is defined as follows: with $f$ and $\phi$ as above and $[T] \in \mathcal{S}^0$ with the action on $T$ given by the homomorphism $\psi: \mathbb{F} \to \text{Aut}(T)$, then $f[T]$ is the conjugacy class of $\mathbb{F}$-tree determined by $\psi \circ \phi^{-1}: \mathbb{F} \to \text{Aut}(T).$ That is, the underlying tree is unchanged and the action is precomposed with the inverse of a representative automorphism for $f$. Again, checking that this is a well-defined action that extends to all of $\mathcal{S}$ (or $\mathcal{S}'$) is an easy exercise. These definitions have the convenient property that if $[T]$ is a conjugacy class of free splitting with vertex stabilizers $[A_1], \ldots, [A_l] $, then $f[T]$ has vertex stabilizers $f[A_1], \ldots, f[A_l] $, for any $f  \in \Out (\mathbb{F})$. 

There is a natural, coarsely defined map $\pi: \mathcal{S}' \to \mathcal{FF}$. For $T \in \S'$, we set $\pi(T)$ equal to the set of free factors that arise as a vertex group of a $1$-edge collapse of $T$. That is, $A \in \pi(T)$ if and only if there is a tree $T_0 \in  \S$, $T$ refines $T_0$, and $A$ is a vertex group of $T_0$. We are content to have this map defined only on the vertices of $\S'$ and to observe property that for any $T \in \S'$, diam$_{\FF}(\pi(T)) \le 4$. Letting $d_{\FF}$ denote distance in $\FF$ and setting $d_{\FF}(\pi(T),\pi(T')) = $ diam$_{\FF}(\pi(T) \cup \pi(T'))$, it is easily verified that $\pi$ is $4$-Lipschitz. Note that here, and throughout the paper, the brackets that denote conjugacy classes of trees and free factors will often be suppressed when it should cause no confusion to do so.

Recent efforts to understand the free splitting and free factor complex have focused on their metric properties along with their similarity to the complex of curves of a surface. In particular, both complexes are now known to be Gromov-hyperbolic. Hyperbolicity of the free factor complex was first shown by \cite{BFhyp} then \cite{KapRaf}; hyperbolicity of the free splitting complex was first shown by \cite{HMsplit} and then \cite{HHhyp}. We record this as a single theorem.

\begin{theorem}[\cite{BFhyp, HMsplit, KapRaf, HHhyp}] 
For $n \ge 2$, $\S_n$ and $\FF_n$ are Gromov- hyperbolic.
\end{theorem}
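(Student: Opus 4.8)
The statement collects four separate hyperbolicity results, so the plan is to prove the two assertions --- hyperbolicity of $\S_n$ and of $\FF_n$ --- in sequence, deducing the latter from the former, and to dispose of the rank-two case by hand.

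For $\S_n$ the approach I would take is topological. By Hatcher's work, $\S_n$ is $\Out(\F_n)$-equivariantly quasi-isometric to the \emph{sphere complex} of the manifold $M_n$ obtained as the connected sum of $n$ copies of $S^1 \times S^2$: vertices are isotopy classes of essential embedded $2$-spheres in $M_n$, and a collection spans a simplex when the spheres can be realized disjointly. Hatcher's \emph{normal position} lets one realize any finite set of sphere systems simultaneously with pairwise minimal intersection, and this gives explicit \emph{surgery paths}: from spheres $[S]$ to $[S']$ in normal position one repeatedly cuts along an innermost disk of $S \cap S'$, producing a sequence of spheres interpolating between the two. The crux is then to verify a combinatorial hyperbolicity criterion for $\S_n$ using these paths --- for instance the Masur--Minsky / Bowditch ``guessing geodesics'' criterion: one must show (i) the surgery paths are coarsely connected and are uniform reparametrized quasigeodesics, and (ii) triangles formed from three such paths are uniformly thin. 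Point (ii) is the substance, and is proved by bookkeeping the intersection pattern of a third sphere system against the surgered spheres, using that innermost-disk surgery strictly decreases intersection number and affects a bystander sphere in a controlled way. This is the Handel--Mosher argument; Hilion--Horbez carry out the same outline more efficiently with surgery (``unicorn'') paths in the combined arc-and-sphere picture.

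For $\FF_n$ with $n \ge 3$, rather than starting over I would push hyperbolicity forward from $\S_n$ along the coarsely Lipschitz map $\pi : \S'_n \to \FF_n$ recorded above (which is $4$-Lipschitz and, since every free factor appears as a vertex group of some one-edge free splitting, coarsely surjective). This uses a general transfer criterion in the style of Kapovich--Rafi: if $Y$ is Gromov-hyperbolic and $\pi : Y \to Z$ is coarsely Lipschitz and coarsely onto, then $Z$ is hyperbolic provided the $\pi$-image of every geodesic of $Y$ stays uniformly close to a geodesic of $Z$ joining its endpoints (together with a mild coarse-connectedness hypothesis on fibers). The content is verifying the fellow-traveling condition: along a geodesic of free splittings $T_0, \dots, T_m$, each elementary move (collapse or expansion of a single edge orbit) changes the collection of vertex groups by a bounded amount in $\FF_n$, so $\pi(T_0), \dots, \pi(T_m)$ progresses boundedly and cannot stray far from an $\FF_n$-geodesic. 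Alternatively --- and this is the historically first proof --- one can argue directly inside Outer space $\X_n$: folding paths project to reparametrized quasigeodesics in $\FF_n$, and one invokes a local-to-global (Bestvina--Feighn) criterion that a geodesic space admitting a family of uniform quasigeodesics with a local thinness property is hyperbolic.

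Finally, for $n = 2$ the complex $\FF_2$ is by definition the Farey graph, whose hyperbolicity is classical --- for example, collapsing each Farey triangle to a point yields a tree, so $\FF_2$ is quasi-isometric to a tree, hence hyperbolic. The genuinely difficult step in all of this is the first one: establishing hyperbolicity of $\S_n$. Verifying the thin-triangles condition for surgery paths requires a delicate analysis, uniform in $n$, of how sphere (or arc) intersection patterns evolve under iterated innermost-disk surgeries against a third system, and this combinatorial core is exactly the technical heart of \cite{HMsplit} and \cite{HHhyp}; by comparison the transfer to $\FF_n$ and the Farey-graph case are routine.
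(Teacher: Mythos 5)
The paper does not prove this theorem; it is stated purely as a citation of \cite{BFhyp, HMsplit, KapRaf, HHhyp} (``We record this as a single theorem''), so there is no internal proof to compare against. Your proposal is a roadmap of the cited literature rather than a proof, and as a roadmap it captures the correct architecture: surgery/fold paths plus a guessing-geodesics criterion for $\S_n$, a transfer argument or a folding-path argument for $\FF_n$, and the Farey graph for $n=2$. Two points deserve correction, one of attribution and one of substance.

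On attribution: the original Handel--Mosher proof of hyperbolicity of $\S_n$ is a combinatorial argument with fold paths of free splittings, not with spheres; it is Hilion--Horbez who run the sphere-surgery argument in $M_n$ (and the identification with the sphere complex, Proposition \ref{AS} in this paper, is an equivariant isomorphism, not merely a quasi-isometry). Your description assigns the sphere-surgery bookkeeping to \cite{HMsplit} and treats \cite{HHhyp} as a refinement of it, which reverses the actual methods.

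On substance: in the transfer step for $\FF_n$ you assert that since each elementary collapse/expansion moves $\pi(T_i)$ a bounded distance in $\FF_n$, the image path ``cannot stray far from an $\FF_n$-geodesic.'' That inference is false: a path with uniformly bounded steps can wander arbitrarily far from the geodesic joining its endpoints. Establishing that images of $\S_n$-geodesics (or of folding paths) are uniform \emph{unparametrized quasigeodesics} in $\FF_n$ is precisely the nontrivial content of \cite{KapRaf} (and of the Bestvina--Feighn projection argument), and it requires genuinely new input --- e.g.\ control of what a free factor that is elliptic, or nearly elliptic, in many consecutive splittings along the path forces about those splittings. As written, your sketch replaces the hard step of the $\FF_n$ argument with a non sequitur, so it should not be read as reducing the theorem to the $\S_n$ case without further work.
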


Remark that the action $\Out(\mathbb{F}_n) \curvearrowright \mathcal{S}_n$ is far from proper; all vertices have infinite point stabilizers. There is, however, an invariant subcomplex of $\S'_n$ that is locally finite, and the inherited action is proper. This is the spine of Outer space and we refer the reader to  \cite{CVouter} or \cite{HMdistort} for details beyond what is discussed here. Also, see  \cite{Hatcher} or \cite{AS} for an alternative perspective. 

The \emph{spine of Outer space} $\K_n$ is the subcomplex of $\S'_n$ span by vertices that correspond to proper splittings of $\F_n$. Recall that a splitting $T$ is \emph{proper} if no element of $\F_n$ acts elliptically, i.e. fixes a vertex, in $T$. Hence, $T \in \S'_n$ is proper if $T/ \F_n$ is a graph with fundamental group isomorphic to $\F_n$. Observe that since $\Out(\F_n)$ preserve the vertices of $\S'_n$ corresponding to proper splittings there is an induced simplicial action $\Out(\F_n) \curvearrowright \K_n$.

 It is well-known that $\mathcal{K}_n$ is a locally finite, connected complex and that the action $\Out(\mathbb{F}_n) \curvearrowright \mathcal{K}_n$ is proper and cocompact (see \cite{CVouter}). Hence, for any tree $T \in \mathcal{K}_n^0$, the orbit map $g \mapsto gT$ defines a quasi-isometry from $\Out(\F_n)$ to $\K_n$ by the \u{S}varc-Milnor lemma \cite{BH}. As remarked above, the metric considered is the standard graph metric on $\K_n^1$, the $1$-skeleton of the spine of Outer space. This metric on $\mathcal{K}_n^1$ will serve as our geometric model for $\Out(\mathbb{F}_n)$. 

\subsection{The sphere complex}
We recall the $\Out(\mathbb{F}_n)$-equivalent identification between the free splitting complex and the sphere complex. See \cite{AS} for details. Take $M_n$ to be the connected sum of $n$ copies of $S^1 \times S^2$, or equivalently, the double of the handlebody of genus $n$. Let $M_{n,s}$ be $M_n$ with $s$ open $3$-balls removed. Note that $\pi_1M_n$ is isomorphic to $\F_n$ and once and for all fix such an isomorphism. A sphere $S$ in $M_{n,s}$ is \emph{essential} if it is not boundary parallel and does not bound a $3$-ball, and a collection of disjoint, essential, pairwise non-isotopic spheres in $M_n$ is called a \emph{sphere system}.  By \cite{Laud}, spheres $S_1$ and $S_2$ are homotopic in $M_n$ if and only if they are isotopic.

The \emph{sphere complex} $\S(M_n)$ is the simplicial complex whose vertices are isotopy classes of essential spheres and vertices $[S_0], \ldots, [S_k] $ span a $k$-simplex if there are representatives in these isotopy classes that are disjoint in $M_n$. It is a theorem of \cite{Laud} that setting $\Mod(M_n) = \pi_0$(Diff$M_n$) there is an exact sequence 
$$1 \to K \to \Mod(M_n) \to \Out(\F_n) \to 1, $$
where $K$ is a finite group generated by ``Dehn twists'' about essential spheres. Since elements of $K$ act trivially on $\S(M_n)$, we have a well-defined action $\Out(\F_n) \curvearrowright \S(M_n)$. The following proposition of Arramayona and Souto identifies $\S_n$ and $\S(M_n)$. See Section \ref{surfacesandsplittings} for how one constructs splittings from essential spheres.

\begin{proposition}[\cite{AS}] \label{AS}
For $n \ge2$, $\S_n$ and $\S(M_n)$ are $\Out(\F_n)$-equivariantly isomorphic.
\end{proposition}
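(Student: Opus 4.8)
The plan is to build the isomorphism explicitly, by sending an essential sphere to the Bass--Serre tree dual to the free splitting of $\F_n \cong \pi_1 M_n$ that it induces (a construction spelled out in Section~\ref{surfacesandsplittings}). Given an essential sphere $S \subset M_n$, I would lift it to the universal cover $\widetilde{M_n}$, obtaining an $\F_n$-invariant system $\widetilde S$ of disjoint spheres, and take $T_S$ to be the tree with one vertex per component of $\widetilde{M_n}\setminus\widetilde S$ and one edge per sphere of $\widetilde S$, with the $\F_n$-action induced by deck transformations. Since the spheres of $\widetilde S$ are simply connected, $T_S$ has trivial edge stabilizers; since $S$ is essential --- it bounds no ball and (vacuously, as $\partial M_n = \emptyset$) is not boundary parallel --- the action is minimal and $T_S$ is non-trivial, hence $T_S$ is a $1$-edge free splitting. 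As isotopic spheres have $\F_n$-equivariantly homeomorphic dual trees, this descends to a well-defined map $\Phi\colon \S(M_n)^0 \to \S_n^0$, and the bulk of the proof is to show $\Phi$ is a bijection and extends to an equivariant simplicial isomorphism.

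For surjectivity of $\Phi$ on vertices, I would note that a $1$-edge free splitting has, up to the $\Out(\F_n)$-action, one of finitely many topological types --- non-separating, or separating with a given partition of the rank --- and exhibit a sphere of each type: the connect-sum sphere of $M_a \# M_b \cong M_n$ realizes $\F_n = A\ast B$ with $\mathrm{rk}(A)=a$ and $\mathrm{rk}(B)=b$, and the sphere $\{\ast\}\times S^2$ in the $S^1\times S^2$ summand of $M_{n-1}\#(S^1\times S^2)\cong M_n$ realizes the non-separating splitting. Since $\Out(\F_n)$ is transitive on splittings of each type and acts on $\S(M_n)$ through $\Mod(M_n)\twoheadrightarrow\Out(\F_n)$, every $1$-edge free splitting would then lie in the image. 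For injectivity, collapsing each component of $\widetilde{M_n}\setminus\widetilde S$ to its vertex gives an $\F_n$-equivariant map $\widetilde{M_n}\to T_S$; any two such maps are $\F_n$-equivariantly homotopic, so the homotopy class of the preimage of an edge midpoint --- which is $S$ --- is determined by $[T_S]$. Then Laudenbach's theorem \cite{Laud} (homotopic essential spheres are isotopic) upgrades this to isotopy, giving injectivity.

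To promote $\Phi$ to a simplicial isomorphism, I would observe that disjoint, pairwise non-isotopic essential spheres $S_0,\dots,S_k$ are dual --- after simultaneously cutting $\widetilde{M_n}$ along the lifts of all of them --- to a single $(k+1)$-edge free splitting $T$ refining each $T_{S_i}$ (collapse every edge orbit but that of $S_i$), so $[T_{S_0}],\dots,[T_{S_k}]$ span a $k$-simplex with common refinement $T$; conversely, the model-manifold construction above, applied to the graph-of-groups decomposition of a $(k+1)$-edge free splitting, produces a disjoint sphere system realizing it (the resulting closed $3$-manifold being $M_n$ by prime decomposition). Equivariance would follow by representing $f\in\Out(\F_n)$ by a diffeomorphism $F$ of $M_n$: a $\phi$-equivariant lift of $F$ carries $\widetilde S$ to a lift of $F(S)$ and induces a homeomorphism $T_S\to T_{F(S)}$ intertwining the $\F_n$-action on $T_S$ with the action on $T_{F(S)}$ precomposed by a representative of $f$ --- precisely the $\Out(\F_n)$-action on $\S_n$ defined in Section~\ref{basics}.

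The hard part, I expect, will be injectivity: recovering an essential sphere from its dual tree \emph{up to isotopy} genuinely uses the rigidity of $M_n$ encoded in Laudenbach's theorem, together with the uniqueness up to equivariant homotopy of the collapse map $\widetilde{M_n}\to T_S$, and one must check that passing to conjugacy classes of trees does not merge distinct isotopy classes of spheres. The realization step also needs a little care, to match the identifications $M_a\#M_b\cong M_n$ and $M_{n-1}\#(S^1\times S^2)\cong M_n$ with the fixed marking $\pi_1 M_n\cong\F_n$.
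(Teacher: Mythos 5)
First, a point of comparison: the paper does not prove this proposition at all --- it is quoted from \cite{AS} --- so there is no internal argument to measure your proposal against. What you have written is essentially a reconstruction of the Aramayona--Souto argument: dual trees to lifted spheres in $\widetilde{M_n}$, Laudenbach's theorems, the exact sequence $1 \to K \to \Mod(M_n) \to \Out(\F_n) \to 1$, and transitivity of $\Out(\F_n)$ on splittings of a fixed topological type. Your construction of $\Phi$, the surjectivity argument, the simplex correspondence, and the equivariance check are all sound (modulo the standard bit of $3$-manifold topology needed to see that a separating sphere in $M_n$ with a relatively simply connected complementary side actually bounds a ball, so that essential spheres give minimal, non-trivial trees).

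The genuine gap is in injectivity. From the fact that any two $\F_n$-equivariant maps $\widetilde{M_n}\to T_S$ are equivariantly homotopic you infer that ``the homotopy class of the preimage of an edge midpoint is determined by $[T_S]$.'' That inference is false as a general principle: point-preimages of regular values under homotopic maps are \emph{cobordant} (this is the Pontryagin--Thom picture), not homotopic or isotopic as submanifolds, and a generic map equivariantly homotopic to the collapse map can have midpoint-preimages with many components of positive genus. What the homotopy-uniqueness of the collapse map actually pins down is the dual (co)homology data, equivalently the class of each lift $\tilde{s}\subset \widetilde{S}$ in $H_2(\widetilde{M_n})\cong\pi_2(M_n)$, which one reads off from the partition of the complementary pieces of $\widetilde{M_n}$ induced by the corresponding edge of $T_S$. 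To close the argument you must combine two facts: (a) the conjugacy class of $T_S$ determines the $\pi_2$-classes of the lifts of $S$ (up to sign and the $\F_n$-action), and (b) Laudenbach's theorem that an embedded essential sphere in $M_n$ is determined up to isotopy by its free homotopy class as a map $S^2 \to M_n$, i.e.\ by exactly this $\pi_2$-data. You correctly flag injectivity as the hard step and correctly name Laudenbach's theorem as the essential input, but the mechanism you propose for extracting the sphere from the tree does not work as stated and must be routed through $\pi_2(M_n)$ as above.
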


\subsection{Translation length in $\mathcal{FF}_n$}
\emph{Fully irreducible} outer automorphisms are those $f \in \Out(\F_n)$ that have no positive power that fixes a conjugacy class of free factor, i.e. for any $A \in \FF_n$, $f^n(A) = A$ implies that $n=0$. Recall that the \emph{(stable) translation length} of an outer automorphism $f \in \Out(F_n)$ on $\FF_n$ is defined as
$$\ell_{\FF}(f) = \lim_{n \to \infty} \frac{d_{\FF}(A,f^nA)}{n} $$
where $A \in \FF_n$. It is not difficult to verify that $\ell_{\FF}(f)$ is well-defined, independent of choice of $A \in \FF_n$, and satisfies the property $\ell_{\FF}(f^n)= n \cdot \ell_{\FF}(f)$ for $n \ge 0$. Further note that $\ell_{\FF}(f) \ge c$ if and only if for all $A \in \FF_n$, $d_{\FF}(A, f^nA)\ge c|n|$. The following proposition characterizes those outer automorphisms with positive translation length on $\FF_n$.

\begin{proposition}[\cite{BFhyp}] \label{fftrans}
Let $f \in \Out(\mathbb{F}_n)$, $f$ is fully irreducible if and only if $\ell_{\FF}(f) > 0$.
\end{proposition}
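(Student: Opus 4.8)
The plan is to establish both directions of Proposition~\ref{fftrans} by leveraging two facts about the geometry of $\FF_n$: that it is Gromov-hyperbolic (recorded in the theorem cited as \cite{BFhyp, HMsplit, KapRaf, HHhyp}), and that $\Out(\F_n)$ acts on it simplicially. The forward direction, that full irreducibility implies $\ell_{\FF}(f)>0$, is the substantive half; the converse is essentially a soft consequence of definitions.

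\medskip
\noindent\textbf{The easy direction.} Suppose $\ell_{\FF}(f)>0$. If $f$ were not fully irreducible, then some positive power $f^k$ (with $k>0$) would fix a conjugacy class of free factor, i.e.\ there is $A \in \FF_n$ with $f^k(A) = A$. Then for every $n$, $d_{\FF}(A, f^{nk}A) = 0$, which forces $\ell_{\FF}(f^k) = 0$ by taking the limit along the subsequence of multiples of $k$ (using that the limit defining translation length exists and is independent of basepoint). But $\ell_{\FF}(f^k) = k \cdot \ell_{\FF}(f)$ for $k \ge 0$, as noted in the excerpt, so $\ell_{\FF}(f) = 0$, a contradiction. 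Hence $f$ is fully irreducible.

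\medskip
\noindent\textbf{The hard direction.} Now suppose $f$ is fully irreducible; we must show $\ell_{\FF}(f)>0$. The natural approach is to argue by contradiction: if $\ell_{\FF}(f) = 0$, then the orbit $\{f^n A\}_{n \in \mathbb{Z}}$ of a basepoint $A \in \FF_n$ has sublinear (in fact bounded, once one upgrades sublinear growth to boundedness using that $f$ is an isometry: $d_{\FF}(A, f^{m+n}A) \le d_{\FF}(A, f^m A) + d_{\FF}(f^m A, f^{m+n} A) = d_{\FF}(A, f^m A) + d_{\FF}(A, f^n A)$, so the sequence $a_n = d_{\FF}(A, f^n A)$ is subadditive, and $\ell = \lim a_n/n = \inf a_n / n$; if this infimum is $0$, subadditivity plus the fact that distances in $\FF_n^1$ are nonnegative integers forces $a_n$ to be bounded — otherwise one can extract $a_{n_k} \le \epsilon n_k$ with $a_{n_k}\to\infty$, and combining $a_{qn_k + r}$ estimates gives a contradiction with integrality only after more care, so more honestly: $\ell=0$ means for every $\epsilon$ there is $n$ with $a_n \le \epsilon n$; iterating, $a_{mn} \le m a_n \le \epsilon m n$, so along this arithmetic progression the orbit stays within a uniformly bounded neighborhood of $A$ when $\epsilon$ is small relative to... ) — the cleanest route is: $\ell_{\FF}(f) = 0$ iff $f$ does not act loxodromically on $\FF_n$. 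Since $\FF_n$ is Gromov-hyperbolic, an isometry with zero translation length is either \emph{elliptic} (bounded orbits) or \emph{parabolic} (orbits unbounded but with no invariant axis / zero stable length). The key geometric input I would invoke is that $\Out(\F_n)$ acting on $\FF_n$ has no parabolic elements — this follows because $\Out(\F_n)$ acts on the \emph{locally finite} spine $\K_n$ properly and cocompactly, so it is finitely generated and acts coarsely transitively, and one can use the WPD/acylindricity-type properties of this action (or, more elementarily, the fact that the action is by isometries of a hyperbolic space and the relevant subgroups are virtually cyclic or reducible). Granting no parabolics, $\ell_{\FF}(f) = 0$ forces $f$ to be elliptic, i.e.\ to have a bounded orbit $\{f^n A\}$. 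The final step is to promote a bounded orbit to a \emph{periodic} free factor: the set $\{f^n A : n \in \mathbb{Z}\}$ lies in a ball of radius $R$ in $\FF_n$, and I would argue that $f$ must then fix a point of $\FF_n$ up to bounded error, and ultimately that some power $f^k$ genuinely fixes a conjugacy class of free factor, contradicting full irreducibility.

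\medskip
\noindent\textbf{Where the real difficulty lies.} The crux is the claim ``bounded orbit on $\FF_n$ $\Rightarrow$ some power fixes a conjugacy class of free factor.'' This is where one genuinely needs train track theory for $\Out(\F_n)$: the honest proof (due to Bestvina--Feighn) runs through relative train track representatives and an invariant free factor system, showing that a non-fully-irreducible $f$ has uniformly bounded $\FF_n$-orbit, and conversely. Rather than reproving this machinery, I expect the paper will simply cite Bestvina--Feighn \cite{BFhyp}, where Proposition~\ref{fftrans} originates — the statement is attributed there, and the role of the excerpt is to record it for later use (it will be applied to the partial automorphisms $f_i$ restricted to their supporting free factors, where ``fully irreducible as an automorphism of the free factor'' is exactly the hypothesis guaranteeing positive translation length in the relevant $\FF$-complex). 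So my proposed ``proof'' is really: give the soft converse in full as above, and for the forward direction reduce to the statement that non-fully-irreducible automorphisms have bounded orbits in $\FF_n$, which is the content of \cite[Proposition in \S]{BFhyp} and which I would cite rather than reprove, since its proof requires the relative train track technology that lies outside the scope of this background section.
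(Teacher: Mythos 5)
The paper offers no proof of this proposition at all --- it is stated as a citation to \cite{BFhyp} and used as a black box --- so your approach of proving the soft direction directly and deferring the substantive direction (fully irreducible $\Rightarrow$ loxodromic on $\FF_n$) to \cite{BFhyp} matches, and indeed exceeds, what the paper does. Your argument for the easy direction is correct as written; the exploratory material in the middle of the hard direction (the trailing-off subadditivity parenthetical, the ``no parabolics'' heuristic) is inessential and should simply be cut, since you yourself correctly conclude that the content there is precisely the train-track input that must be cited rather than reproved.
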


It appears to be an open question whether there is a uniform lower bound on translation length for fully irreducible outer automorphisms for a fixed rank free group, as is the case for pseudo-Anosov mapping classes acting on the curve complex \cite{MM1}. It is worthwhile to note that when $n=2$, Proposition \ref{fftrans} reduces to the statement that if an outer automorphism is infinite order and does not fix a conjugacy class of a primitive element in $\F_2$, then it acts hyperbolically on $\FF_2$, which as noted above is the Farey graph. This well-known statement is what is used in most of our applications.

\section{Projections to free factor complexes}
For a finitely generated subgroup $H\le \F$, let $\S(H)$ and $\mathcal{F}(H)$ denote the free splitting complex and free factor complex of $H$, respectively. The subgroup $H$ is \emph{self-normalizing} if $N(H)=H$, where $N(H)$ is the normalizer of $H$ in $\F$. Less formally, a subgroup $H$ is self-normalizing if the only elements that conjugate $H$ back to itself are those elements in $H$. When $H$ is self-normalizing the complexes $\S(H)$ and $\mathcal{F}(H)$ depend only on the conjugacy class of $H$ in $\F$. More precisely, if $H' =gHg^{-1}$ for $g\in \F$, then $g$ induces an isomorphism between $\S(H)$ and $\S(H')$ (and between $ \mathcal{F}(H)$ and $\mathcal{F}(H')$) via conjugation. For any other $x \in \F$ with $H' = xHx^{-1}$ we see that $x^{-1}g$ normalizes $H$ and so $x^{-1}g \in H$. In this case, $gH=xH$ and it is easily verified that $g$ and $x$ induce identical isomorphisms between $\S(H)$ and $\S(H')$. Hence, when $H$ is self-normalizing we obtain a canonical identification between the free splitting complex of $H$ and the free splitting complex of each of its conjugates. The same holds for the free factor complex of $H$. This allows us to unambiguously refer to the free splitting complex or free factor complex for the conjugacy class $[H]$.  Finally, recall that a subgroup $C \le \F$ is \emph{malnormal} if $xCx^{-1} \cap C \neq \{1\}$ implies that $x \in C$. For example, free factors of $\F$ are malnormal and malnormal subgroups are self-normalizing.

\subsection{Projecting trees}
Given a free splitting $T \in \S'$ and a finitely generated subgroup $H \le \F$ denote by $T^H$ the \emph{minimal $H$-subtree} of $T$. This is the unique minimal $H$-invariant subtree of the restricted action $H \curvearrowright T$ and is either trivial, in which case $H$ fixes a unique vertex in $T$, or is the union of axes of elements in $H$ that act hyperbolically on $T$. When $T^H$ is not trivial, we define the \emph{projection} of $T$ to the free splitting complex of $H$ as $\pi_{\mathcal{S}(H)}(T) = [H \curvearrowright T^H]$, where the brackets denote conjugacy of $H$-trees. Note that this projection is a well-defined vertex of $\mathcal{S}'(H)$ and depends only on the conjugacy class of $T$. This is the case since any conjugacy between $\F$-trees will induce a conjugacy between their minimal $H$-subtrees. Further define the projection to the free factor complex of $H$ to be the composition $\pi_H(T) = \pi (\pi_{\mathcal{S}(H)}(T))$, where $\pi: \S(H) \to \FF(H)$ is the $4$-Lipschitz map defined in Section \ref{basics}. Hence, $\pi_H(T) \subset \mathcal{F}(H)$ is the collection of free factors of $H$ that arise as a vertex group of a one-edge collapse of the splitting $H \curvearrowright T^H$. When $H$ is also self-normalizing, e.g. a free factor, these projections are independent of the choice of $H$ within its conjugacy class. The following lemma verifies that such projections are coarsely Lipschitz. 

\begin{lemma}\label{refine}
Let $\mathbb{F}_n \curvearrowright T$ be a free splitting and $H \le \F_n$ finitely generated with $T^H$ non-trivial. Let $T_0$ be a refinement of $T$ with equivalent collapse map $c: T_0 \to T$. Then there is an induced map $c_H: T_0^H \to T^H$ which is also a collapse map. Hence, $T_0^H$ is a refinement of $T^H$.
\end{lemma}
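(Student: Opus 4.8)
We have $\F_n \curvearrowright T$ a free splitting, $H \le \F_n$ finitely generated with $T^H$ nontrivial, and a refinement $T_0$ of $T$ with $\F_n$-equivariant collapse map $c\colon T_0 \to T$. We want to produce an induced collapse map $c_H \colon T_0^H \to T^H$.

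The natural candidate for $c_H$ is simply the restriction of $c$ to the minimal $H$-subtree $T_0^H \subset T_0$. So the plan is:

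First, observe that $c$ is $H$-equivariant (since it is $\F_n$-equivariant and $H \le \F_n$), and that $c$ is surjective. I would argue that $c(T_0^H)$ is an $H$-invariant subtree of $T$: it is $H$-invariant because $T_0^H$ is $H$-invariant and $c$ is $H$-equivariant, and it is a subtree (connected) because the continuous image of a connected set is connected, and because collapse maps send subtrees to subtrees. Since $T^H$ is the *unique* minimal $H$-invariant subtree of $T$, we get $T^H \subseteq c(T_0^H)$. For the reverse inclusion, I would use that $c(T_0^H)$ is minimal: any element of $H$ acting hyperbolically on $T_0$ has its axis mapped by $c$ either to a point or onto the axis of its action on $T$ (a collapse map cannot increase translation length and sends axes to axes or points), and $T_0^H$ is the union of such axes. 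Thus $c(T_0^H)$ is contained in the union of the $T$-axes of hyperbolic elements of $H$ together with possibly a fixed point, which is exactly $T^H$ (using nontriviality of $T^H$). Hence $c(T_0^H) = T^H$, so the restriction $c_H := c|_{T_0^H} \colon T_0^H \to T^H$ is a well-defined surjection onto $T^H$.

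Second, I would check $c_H$ is simplicial and that point preimages $c_H^{-1}(x) = c^{-1}(x) \cap T_0^H$ are connected. Connectedness is the key point: $c^{-1}(x)$ is connected (as $c$ is a collapse map), and the intersection of a connected subtree of $T_0$ with the convex subtree $T_0^H$ is connected in a tree (the intersection of two subtrees of a tree is a subtree). Therefore $c_H$ is a collapse map, which by definition makes $T_0^H$ a refinement of $T^H$. The main obstacle — and the step deserving the most care — is the first one: verifying that $c$ restricted to $T_0^H$ lands exactly on $T^H$ rather than some larger or smaller $H$-invariant subtree. This requires the observation that collapse maps of $\F_n$-trees interact well with minimal subtrees for subgroups, specifically that they neither create nor destroy hyperbolic elements in a way that would change the minimal subtree, which follows from the fact that a collapse map is $1$-Lipschitz for suitable (possibly zero) edge lengths and hence does not increase translation lengths, combined with the characterization of $T^H$ as the union of axes (plus the reduction to the case where some element of $H$ is hyperbolic, guaranteed by nontriviality of $T^H$).
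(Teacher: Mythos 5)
Your proposal is correct and follows essentially the same route as the paper's proof: show $c(T_0^H) = T^H$ by combining $H$-invariance of the image (giving one containment) with the fact that axes of hyperbolic elements of $H$ map to axes or fixed vertices in $T^H$ (giving the other), then verify connectedness of point preimages as the intersection of two subtrees of $T_0$. No gaps.
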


\begin{proof}
Since $c(T_0^H) \subset T$ is an invariant $H$-tree, it contains $T^H$.  Also, the axis in $T_0$ of any hyperbolic $h\in H$, which exists because $T_0^H$ is nontrivial, is mapped by $c$ to either $h$'s axis in $T$ or a singe vertex stabilized by $h$; each of which is contained in $T^H$. Since $T_0^H$ is the union of such axes, we see that $c(T_0^H) = T^H$. Hence the map $c_H$ described in the lemma is given by restriction. It remains to show that $c_H$ is a collapse map. This is the case since for any $p \in T^H$,
$$c_H^{-1}(p) = T_0^H \cap c^{-1}(p) $$
is the intersection of two subtrees of $T_0$ and is therefore connected. 
\end{proof}

For a free factor $A$ of $\F$ we use the symbol $d_A$ to denote distance in $\mathcal{F}(A)$ and for $\mathbb{F}_n$-trees $T_1,T_2$ we use the shorthand 
$$d_A(T_1,T_2) := d_A (\pi_A (T_1), \pi_A(T_2)) = \text{diam}_A(\pi_A (T_1) \cup \pi_A(T_2))$$
when both projections are defined. The following proposition follows immediately from the definitions in this section and Lemma \ref{refine}.

\begin{proposition}[Basic properties I] \label{properties1}
Let $T_1,T_2$ be adjacent vertices in $\mathcal{K}_n$, $A \in \FF_n$, and $H$ a finitely generated and self-normalizing subgroup of $\F_n$ containing $A$, up to conjugacy. Then we have the following:
\begin{enumerate}
\item diam$_{\mathcal{F}(A)}(\pi_A(T)) \le 4,$
\item $d_A (T_1,T_2) \le 4,$
\item $\pi_A(T_1) = \pi_A(\pi_{\S(H)}(T_1))$ and so $d_A (T_1,T_2) = d_A (\pi_{\S(H)}(T_1),\pi_{\S(H)}(T_2)).$
\end{enumerate}
\end{proposition}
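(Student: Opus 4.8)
The plan is to dispatch the three items in order; each reduces to a soft fact about minimal subtrees together with the properties of the coarse map $\pi$ recalled in Section \ref{basics}. Throughout I use that every vertex of $\K_n$ is a proper splitting, so every nontrivial element of $A$ (and of $H$) acts hyperbolically on it; hence $T^A$ and $T^H$ are non-trivial for each $T\in\K_n$, and all the projections appearing below are genuinely defined.

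For item (1), note that by definition $\pi_A(T)=\pi\big(\pi_{\S(A)}(T)\big)$ with $\pi_{\S(A)}(T)$ a \emph{single} vertex of $\S'(A)$. The observation from Section \ref{basics} that $\mathrm{diam}_{\FF}\big(\pi(S)\big)\le 4$ for every vertex $S$ of $\S'$ applies verbatim to the free factor $A$ in place of $\F_n$ (handling the low-rank cases separately but trivially), giving $\mathrm{diam}_{\mathcal F(A)}\big(\pi_A(T)\big)\le 4$.

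For item (2), adjacency of $T_1$ and $T_2$ in $\K_n\subset\S'_n$ means that, up to conjugacy, one refines the other; say $T_1$ refines $T_2$ with collapse map $c\colon T_1\to T_2$. By Lemma \ref{refine}, $T_1^A$ refines $T_2^A$, so $\pi_{\S(A)}(T_1)$ refines $\pi_{\S(A)}(T_2)$ as vertices of $\S'(A)$. Now, straight from the defining characterization of $\pi\colon\S'(A)\to\mathcal F(A)$ — namely $B\in\pi(S)$ iff $S$ refines a one-edge splitting having $B$ as a vertex group — passing to a refinement only enlarges the $\pi$-image, so $\pi_A(T_2)\subseteq\pi_A(T_1)$. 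Hence $\pi_A(T_1)\cup\pi_A(T_2)=\pi_A(T_1)$, whose diameter is $\le 4$ by item (1), and therefore $d_A(T_1,T_2)\le 4$.

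For item (3), the heart of the matter is the equality of $A$-trees $\big(T_1^H\big)^A=T_1^A$. After conjugating $A$ — harmless since $A$ and $H$ are self-normalizing, by the discussion at the start of this section — we may assume $A\le H$. Then $T_1^H$, being the union of axes of $H$-hyperbolic elements, contains the axis of every $A$-hyperbolic element and hence contains $T_1^A$; since $T_1^A$ is $A$-invariant and minimal, it \emph{is} the minimal $A$-subtree of $T_1^H$. Unwinding the definitions,
\[
\pi_A\big(\pi_{\S(H)}(T_1)\big)=\pi\Big(\pi_{\S(A)}\big([H\curvearrowright T_1^H]\big)\Big)=\pi\big([A\curvearrowright (T_1^H)^A]\big)=\pi\big([A\curvearrowright T_1^A]\big)=\pi_A(T_1),
\]
and likewise for $T_2$; the stated identity of distances is then immediate, since $d_A(\cdot,\cdot)$ is by definition the $\mathcal F(A)$-diameter of the union of the two $\pi_A$-images. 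None of this is a genuine obstacle — as the statement suggests, everything follows from the definitions and Lemma \ref{refine}; the only point requiring a little care is checking in item (3) that the ``up to conjugacy'' containment $A\le H$ does not disturb the canonically identified complex $\mathcal F(A)$, which is exactly what self-normality of $A$ and $H$ buys.
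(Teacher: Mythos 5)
Your proof is correct and is exactly the verification the paper has in mind: the paper gives no written argument for Proposition \ref{properties1}, stating only that it ``follows immediately from the definitions in this section and Lemma \ref{refine},'' and your three steps — the diameter bound inherited from $\pi\colon\S'(A)\to\mathcal F(A)$, the refinement/containment argument via Lemma \ref{refine} for adjacency, and the identity $(T^H)^A=T^A$ of minimal subtrees for item (3) — are precisely that unwinding. The preliminary observation that properness of vertices of $\K_n$ guarantees all the minimal subtrees are nontrivial, and the remark that self-normality makes the ``up to conjugacy'' hypothesis harmless, are the right points to flag and are handled correctly.
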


\subsection{Projecting factors}\label{factorproj} 
Consider rank $\ge 2$ free factors $A$ and $B$ of the free group $\mathbb{F}_n$. Define $A$ and $B$ to be \emph{disjoint} if they are distinct vertex groups of a common splitting of $\mathbb{F}_n$. Disjoint free factors are those that will support commuting outer automorphisms in our construction. Define $A$ and $B$ to \emph{meet} if there exist representatives in their conjugacy classes whose intersection is nontrivial and proper in each factor. In this section, we show that this intersection provides a well-defined projection of $[B]$ to $\mathcal{F}(A)$, the free factor complex of $A$. Note that if $A$ and $B$ meet, then $d_{\mathcal{FF}}([A],[B]) = 2$.

Fix free factors $A$ and $B$ in $\F_n$. Define the projection of $B$ into $\mathcal{F}(A)$ as
$$\pi_A(B) = \{ [A \cap gBg^{-1}] : g\in \mathbb{F}_n \} \setminus \{1\}, $$
where conjugacy is taken in $A$. Observe that $A$ and $B$ meet exactly when $\pi_A(B) \neq \emptyset \neq \pi_B(A)$.  We show that members of $\pi_A (B)$ are vertex groups of a single (non-unique) free splitting of $A$ and so $\pi_A(B)$ has diameter $\le 4$ in $\mathcal{F}(A)$. Since the projection is independent of the conjugacy class of $B$, this provides the desired projection from $[B]$ to the free factor complex of $A$. 

\begin{lemma}\label{welldefined}
Suppose the free factors $A$ and $B$ meet. Then diam$_{\mathcal{F}(A)} \pi_A(B) \le 4$.
\end{lemma}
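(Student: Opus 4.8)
The plan is to show that all the nontrivial subgroups of the form $A \cap gBg^{-1}$ arise as vertex groups of a single free splitting of $A$, so that they pairwise span simplices (or are at distance $\le 2$) in $\mathcal{F}(A)$, giving the diameter bound of $4$ via the definition of $\mathcal{FF}$ together with the standard fact that the vertex groups of a $k$-edge free splitting lie in a set of $\mathcal{FF}$-diameter at most $4$. First I would fix a free splitting $\F_n \curvearrowright T$ in which $B$ is a vertex group; concretely, take a Grushko-type free splitting of $\F_n$ with $B$ as one vertex group, or simply a one-edge splitting $\F_n = B * C$. Then I would pass to the minimal $A$-subtree $T^A \subseteq T$, which is nontrivial precisely because $A$ and $B$ meet — if $T^A$ were a point, $A$ would be conjugate into a vertex group, and chasing through which vertex group and intersecting with $B$ would contradict that $A \cap gBg^{-1}$ is proper in $A$ for the relevant $g$. (This point needs a short argument and is where I'd be most careful.)

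The key computation is then to identify the vertex stabilizers of $A \curvearrowright T^A$. By Bass–Serre theory, since all edge stabilizers of $T$ are trivial, every vertex stabilizer of the restricted action $A \curvearrowright T^A$ is of the form $A \cap \mathrm{Stab}_T(v)$ for some vertex $v$ of $T$, i.e. $A \cap g\,G_w\,g^{-1}$ where $G_w$ is one of the (finitely many, up to conjugacy) vertex groups of $T$. With the splitting chosen so that $B$ is the unique nontrivial vertex group (the complementary factor $C$ being free and absorbed into a rose, or handled by noting its conjugates meet $A$ in a free factor that is itself a vertex group of the induced splitting), the nontrivial vertex stabilizers of $A \curvearrowright T^A$ are exactly the conjugacy classes (in $A$) of the nontrivial subgroups $A \cap gBg^{-1}$ — which is precisely $\pi_A(B)$. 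Since $A \curvearrowright T^A$ is a genuine free splitting of $A$ (minimal, trivial edge stabilizers, nontrivial), its vertex groups are proper free factors of $A$, and collapsing to one-edge splittings exhibits each such vertex group as a vertex of $\mathcal{F}(A)$; any two of them have a common refinement, namely $T^A$ itself, so after barycentric considerations they lie within distance $4$ in $\mathcal{F}(A)$. This is exactly the same bound as in Proposition \ref{properties1}(1), and indeed $\pi_A(B)$ is literally $\pi_A(T)$ for this particular choice of $T$.

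The main obstacle I anticipate is bookkeeping around the complementary free factor $C$: when $\F_n = B * C$, the minimal $A$-subtree sees conjugates of $C$ as well, and one must check that $A \cap gCg^{-1}$, when nontrivial, is itself a free factor of $A$ that is consistent with (a collapse of) $T^A$ — so that including or excluding these pieces does not change the $\mathcal{F}(A)$-diameter estimate. The clean way around this is to choose $T$ more carefully: take a free splitting of $\F_n$ all of whose vertex groups are conjugates of $B$ together with a single vertex group equal to a free complement of maximal rank, or better, invoke that $\pi_A(B) = \pi_A(\pi_{\S(A)}(T))$ via Lemma \ref{refine} and Proposition \ref{properties1}(3) applied to a refinement, reducing the whole statement to the already-established fact that $\pi_A$ of any single tree has diameter $\le 4$. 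I would present the argument in this last form, since it is shortest and reuses the machinery already built: exhibit one tree $T$ with $\pi_A(T) = \pi_A(B)$, then quote Proposition \ref{properties1}(1).
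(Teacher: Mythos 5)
Your argument is correct in substance and is essentially the Bass--Serre dual of the paper's proof. The paper takes a marked graph $G$ with a subgraph $G^B$ carrying $B$, passes to the core $G_A$ of the cover corresponding to $A$, and uses the double-coset correspondence to see that the components of the preimage of $G^B$ lying in $G_A$ are disjoint subgraphs whose fundamental groups are exactly the nontrivial $A\cap gBg^{-1}$. Your minimal subtree $T^A$ of the Bass--Serre tree of $\F_n=B*C$ is the same object in tree language ($G_A$ is, after collapsing, the quotient of that minimal subtree by $A$), and in both versions the punchline is identical: the elements of $\pi_A(B)$ are simultaneously realized as vertex groups of a single free splitting of $A$, hence lie at pairwise distance at most $4$ in $\mathcal{F}(A)$. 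What the paper's formulation buys is the explicit double-coset parametrization of $\pi_A(B)$; what yours buys is a quicker reduction to machinery already set up for trees.

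Three details to tighten. (i) Nontriviality of $T^A$: as you anticipated, this needs a short argument. If $A$ fixed a vertex it would be conjugate into $B$ or into $C$; malnormality of free factors (respectively, the Kurosh subgroup theorem) then forces every $A\cap gBg^{-1}$ to be trivial or all of $A$, contradicting that $A$ and $B$ meet. (ii) You need each nontrivial $A\cap gBg^{-1}$ to stabilize a vertex of $T^A$, not merely of $T$: a vertex outside $T^A$ with nontrivial $A$-stabilizer would force that stabilizer to fix the arc from the vertex to $T^A$, hence to fix an edge, contradicting triviality of edge stabilizers. (iii) The literal equality $\pi_A(T)=\pi_A(B)$ claimed in your last sentence is false: by definition $\pi_A(T)$ consists of vertex groups of \emph{one-edge collapses} of $T^A$, which typically properly contain several of the groups $A\cap gBg^{-1}$ and also involve conjugates of $C$, so neither set contains the other in general. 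What is true, and what both proofs actually rely on, is that any collection of vertex groups of splittings refined by the single splitting $T^A$ has diameter at most $4$ in $\mathcal{F}(A)$; the conclusion therefore stands, but you should invoke that fact directly rather than Proposition \ref{properties1}(1) verbatim.
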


\begin{proof}
First, observe that $g$ uniquely determines the class $[A \cap gBg^{-1}] \in \pi_A(B)$ up to double coset in $\mathbb{F}$. Precisely, $[A \cap gBg^{-1}] = [A \cap hBh^{-1}] \neq 1$ if and only if $AgB = AhB$; this follows from the fact that free factors are malnormal. Now choose any marked graph $G$ which contains a subgraph $G^B$ whose fundamental group represents $B$ up to conjugacy. Let $p_A: \tilde{G}_A \to G$ be the cover of $G$ corresponding to free factor $A$ and let $G_A$ denote the core of $\tilde{G}_A$. By covering space theory, the components of $p^{-1}(G^B)$ are in bijective correspondence with the double cosets $\{AgB: g \in \mathbb{F} \}$ and the fundamental group of the component corresponding to $AgB$ is $A \cap gBg^{-1}$. Since the core carries the fundamental group of $\tilde{G}_A$, all nontrivial subgroups $A \cap gBg^{-1}$ correspond to double cosets representing components of $p^{-1}(G^B)$  in the core $G_A$. Hence, $G_A$ is a marked $A$-graph that contains disjoint subgraphs whose fundamental groups (up to conjugacy in $A$) are the subgroups of $\pi_A(B)$. This completes the proof.
\end{proof}

If $A \in \FF_n$ and $f \in \Out(\F_n)$ stabilizes $A$, then $f$ induces an outer automorphism of $A$, denoted $f|_A \in \Out(A)$. In this case, let $\ell_A(f)$ represent the translation length of $f|_A$ on $\mathcal{F}(A)$. By Proposition \ref{fftrans}, if $f|_A$ is fully irreducible in $\Out(A)$, then $\ell_{A}(f) >0$. The following proposition provides the addition properties of the projections that will be needed throughout the paper. Its proof is a straightforward exercise in working through the definition of this section.  

\begin{proposition}[Basic Properties II] \label{properties2}
Let $A,B,C \in \FF_n$ so that $A$ and $B$ meet and $A$ and $C$ are disjoint. Let $c \in\Out(\mathbb{F})$ stabilize the free factors $A$ and $C$ with $c|_A =1$ in $\Out(A)$. Finally, let $T \in \mathcal{K}^0$ and $f \in \Out(\mathbb{F})$ be arbitrary. Then $f$ induces an isomorphism $f: \mathcal{F}(A) \to \mathcal{F}(fA)$ and we have the following:
\begin{enumerate}
\item f(A) and f(B) meet and $\pi_{fA}(fB) = f(\pi_A(B)) \subset \mathcal{F}(fA)$.
\item $\pi_{fA}(fT) = f(\pi_A(T)) \subset \mathcal{F}(fA)$.
\item $\pi_{A}(cB) = \pi_A(B) \subset \mathcal{F}(A)$.
\item $\pi_{A}(cT) = \pi_A(T )\subset \mathcal{F}(A)$.
\end{enumerate}
\end{proposition}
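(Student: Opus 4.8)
The plan is to fix once and for all an automorphism $\phi\in\mathrm{Aut}(\F)$ representing $f$ and to verify each item by a direct computation with $\phi$, the only genuinely delicate point being the well-definedness of the various ``induced'' maps. Since $A$ is a free factor it is self-normalizing, so by the discussion opening this section $\mathcal{F}(A)$ and $\mathcal{S}(A)$ depend only on $[A]$; moreover $\phi$ carries free factors (resp.\ one-edge free splittings) of $A$ to those of $\phi(A)$, hence descends to simplicial isomorphisms $\mathcal{F}(A)\to\mathcal{F}(\phi A)=\mathcal{F}(fA)$ and $\mathcal{S}(A)\to\mathcal{S}(fA)$, and the projection $\pi\colon\mathcal{S}(A)\to\mathcal{F}(A)$ of Section~\ref{basics} visibly commutes with these. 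That this isomorphism is independent of the representative $\phi$ is exactly the self-normality argument: a second choice differs from $\phi$ by some $\mathrm{ad}_w$, and conjugation by $w$ is precisely the canonical identification of $\mathcal{F}(\phi A)$ with $\mathcal{F}(w\phi(A)w^{-1})$. This is the map $f$ in the statement, and I expect that making this identification canonical is the one step requiring real care; everything else is bookkeeping.

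For (1): $fA=[\phi(A)]$, $fB=[\phi(B)]$, and for each $g\in\F$ one has $\phi(A\cap gBg^{-1})=\phi(A)\cap\phi(g)\phi(B)\phi(g)^{-1}$, which is nontrivial and proper in each factor exactly when $A\cap gBg^{-1}$ is, because $\phi$ is a bijection. As $g$ ranges over $\F$ so does $\phi(g)$, so $\phi$ induces a bijection from $\pi_A(B)$ onto $\pi_{\phi(A)}(\phi(B))$ which, under the identification $\mathcal{F}(A)\cong\mathcal{F}(fA)$ above, is the restriction of $f$; that is, $\pi_{fA}(fB)=f(\pi_A(B))$. Since $A$ and $B$ meet we have $\pi_A(B)\neq\emptyset$, hence $\pi_{fA}(fB)\neq\emptyset$, and symmetrically $\pi_{fB}(fA)\neq\emptyset$, so $fA$ and $fB$ meet. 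For (2): $fT$ has the same underlying tree as $T$ with action precomposed by $\phi^{-1}$, so each $\phi(a)$ with $a\in A$ acts on $fT$ precisely as $a$ acts on $T$. Consequently the minimal $\phi(A)$-subtree of $fT$ is literally the minimal $A$-subtree $T^A$ (in particular one is trivial iff the other is), and $\phi$ identifies the $A$-tree $T^A$ with the $\phi(A)$-tree $(fT)^{\phi A}$; i.e.\ $\pi_{\mathcal{S}(fA)}(fT)=f\bigl(\pi_{\mathcal{S}(A)}(T)\bigr)$. Applying $\pi$ and recalling $\pi_A=\pi\circ\pi_{\mathcal{S}(A)}$ by definition gives $\pi_{fA}(fT)=f(\pi_A(T))$.

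Finally, (3) and (4) are the cases $f=c$ once $\phi$ is normalized. Since $c[A]=[A]$ we may post-compose $\phi$ by an inner automorphism of $\F$ so that $\phi(A)=A$; then $\phi|_A$ represents $c|_A=1\in\Out(A)$, so $\phi|_A$ is an inner automorphism of $A$, and post-composing $\phi$ once more by the corresponding inner automorphism $\mathrm{ad}_{a_0^{-1}}$ with $a_0\in A$ — which still preserves $A$, as $a_0\in A$ — we arrange $\phi|_A=\mathrm{id}_A$. With this representative $fA=A$ and the induced map $c\colon\mathcal{F}(A)\to\mathcal{F}(A)$ is the identity, so (1) and (2) specialize to $\pi_A(cB)=\pi_A(B)$ and $\pi_A(cT)=\pi_A(T)$. (The hypotheses on the disjoint factor $C$ play no role in these two items; they are recorded only for the later applications.) Thus, granting the well-definedness noted in the first paragraph, each assertion reduces to the computations above.
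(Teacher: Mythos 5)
Your verification is correct, and it is precisely the ``straightforward exercise in working through the definitions'' that the paper invokes in lieu of a written proof: fixing a representative automorphism $\phi$, using self-normality of free factors to make the induced isomorphism $\mathcal{F}(A)\to\mathcal{F}(fA)$ canonical, and for (3)--(4) normalizing $\phi$ so that $\phi|_A=\mathrm{id}_A$. Your observation that the disjoint factor $C$ plays no role in items (3) and (4) is also accurate; it is carried in the hypotheses only for the later applications.
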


For the applications in this paper, a slightly stronger condition than meeting is necessary on free factors $A$ and $B$. In particular, we need their meeting representatives to generate the ``correct'' subgroup of $\F$.  More precisely, say that two free factors $A$ and $B$ of $\F$ \emph{overlap} if there are representatives in their conjugacy classes, still denoted $A$ and $B$, so that $A \cap B = x \neq\{1\}$ is proper in both $A$ and $B$ and the subgroup generated by these representatives $\langle A, B \rangle\le \F$ is isomorphic to $A*_xB$. Note that the first condition here is exactly that $A$ and $B$ meet.

\begin{remark}
Suppose the free factors $[A], [B] \in \FF$ overlap and select representatives in their conjugacy classes so that $A \cap B = x$ is nontrivial and proper in both $A$ and $B$. Note that as in Lemma \ref{welldefined} the free factor $x$ is not necessarily unique up to conjugacy, but once the conjugacy class of $x$ is fixed the subgroup $H = \langle A, B \rangle$ generated by these conjugacy class representatives is itself determined up to conjugacy in $\F$. Since $A$ and $B$ overlap, $x$ can be chosen so that $H \cong A*_xB$ and it is not difficult to verify that $H$ is finitely generated and self-normalizing. So, for example, if $T \in S'$, then $\pi_A(T) = \pi_A(\pi_{\S(H)})(T)$ by Lemma \ref{properties1}. Projections of meetings factors, however, may slightly change. In particular, $A$ and $B$ are free factors of $H$ that overlap, but with conjugacy now considered in $H$, $x$ is their unique intersection up to conjugacy. In general, we use the notation $\pi_A(B \le H)$ to denote the projection of $B$ into the free factor complex of $A$ when $B$ is considered as a free factor of the free group $H$. Note that in this case $\pi_A(B \le H) = \{[x]\} \subset \pi_A(B) \subset \mathcal{F}(A)$ and so although the choice of $x$ and, hence, $H$ is not uniquely determined by the overlapping free factors $A$ and $B$, this ambiguity is not significant when considering projections.
\end{remark}

\subsection{The Bestvina-Feighn Projections } \label{BF}
In \cite{BFproj}, the authors show that there is a finite coloring of the vertices of the free factor complex $\FF_n$ and an $M \ge 0$ so that if $A$ and $B$ are free factors either having the same color or with $d_{\FF}(A,B) > 4$, then there is a well-defined projection $\pi_{\S(A)}^{\mathrm{BF}}(B) \subset \mathcal{S}(A)$ whose diameter is less than or equal to $M$. Moreover, these projections have properties that are analogous to subsurface projections. Their projection is defined by choosing any $T \in \K^0_n$ so that the marked graph $T / \F_n$ contains an embedded  subgraph whose fundamental group represents $B$ and taking $\pi_{\S(A)}^{\mathrm{BF}}(B) = \pi_{\S(A)}(T).$ It is shown that when $A$ and $B$ satisfy the stated conditions, this projection is coarsely independent of the choice of $T$. 

Free factors that meet, however, do \emph{not} satisfy the conditions stated above and it is easy to construct examples where $A$ and $B$ meet but the projection $\pi_{\S(A)}^{\mathrm{BF}}(B)$ does not have finite diameter in $\S(A)$ (as the choice of $T$ is varied). Despite this, Lemma \ref{welldefined} shows that if we further project to the free factor complex of $A$ we obtain a set with finite diameter. This shows that when the free factors $A$ and $B$ meet, the projection $\pi_A(B)$ defined in this paper agrees coarsely with the projection $\pi( \pi_{\S(A)}^{\mathrm{BF}}(B)) \subset \mathcal{F}(A)$. In Section \ref{relations}, we relate the projections discussed here with those of \cite{SS}.

\section{The homomorphisms $A(\Gamma) \to \Out(\mathbb{F}_n)$} \label{homomorphisms}
In this section, we present the most general version of our theorem.  Technical conditions are unavoidable since, unlike the surface case, free factors do not uniquely determine splittings and defining the support of an outer automorphism is more subtle. After presenting general conditions, we give a specific construction to which our theorem applies. The idea is to replace the surface in the mapping class group situation with a graph of groups decomposition of $\mathbb{F}$.

\subsection{Admissible systems}
Let $\mathcal{A} = \{A_1, \ldots, A_n \}$ be a collection of (conjugacy classes of) rank $\ge 2$ free factors of $\mathbb{F}$ such that for $i \neq j$ either
\begin{enumerate}
\item $A_i$ and $A_j$ are \emph{disjoint}, that is they are vertex groups of a common splitting, or
\item $A_i$ and $A_j$ \emph{overlap}, so in particular $\pi_{A_i}(A_j) \neq \emptyset \neq \pi_{A_j}(A_i)$.
\end{enumerate}
Then we say that $\mathcal{A}$ is an \emph{admissible collection} of free factors of $\mathbb{F}$. Let $\Gamma = \Gamma_{\mathcal{A}}$ be the coincidence graph for $\mathcal{A}$. This is the graph with a vertex $v_i$ for each $A_i$ and an edge connecting $v_i$ and $v_j$ whenever the free factors $A_i$ and $A_j$ are disjoint.

An outer automorphism $f_i \in \Out(\mathbb{F})$ is said to be \emph{supported} on the factor $A_i$ if $f_i(A_j) = A_j$ for each $v_j$ in the star of $v_i \in \Gamma^0$ and $f_i|_{A_j} = 1 \in \Out(A_j)$ for each $v_j$ in the link of $v_i \in \Gamma^0$. Informally, $f_i$ is required to stabilize and act trivially on each free factor in $\mathcal{A}$ that is disjoint from $A_i$ as well as stabilize $A_i$ itself. We say that $f_i$ is \emph{fully supported} on $A_i$ if in addition $f_i|_{A_i} \in \Out(A_i)$ is fully irreducible. Finally, we call the pair $\mathcal{S} =(\mathcal{A},\{f_i\})$ an \emph{admissible system} if the $f_i$ are fully supported on the collection of free factors $\mathcal{A}$ and for each $v_i,v_j$ joined by an edge in $\Gamma$, $f_i$ and $f_j$ commute in $\Out(\mathbb{F})$ (this condition is made unnecessary in the construction of the next section).

Given an admissible system $\mathcal{S} = (\mathcal{A},\{f_i\})$, we have the induced homomorphism
$$\phi= \phi_{\mathcal{S}}: A(\Gamma) \to \Out(\mathbb{F}_n)$$
defined by mapping $v_i \mapsto f_i .$  Our main theorem is the following:

\begin{theorem}\label{main}
Given an admissible collection $\mathcal{A}$ of free factors for $\mathbb{F}$ with coincidence graph $\Gamma$ there is a $C \ge 0$ so that if outer automorphisms $\{f_i\}$ are chosen to make $\mathcal{S} = (\mathcal{A},\{f_i\})$ an admissible system with $\ell_{A_i}(f_i) \ge C$ then the induced homomorphism $\phi= \phi_{\mathcal{S}}: A(\Gamma) \to \Out(\mathbb{F})$ is a quasi-isometric embedding.
\end{theorem}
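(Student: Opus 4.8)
The plan is to establish the two halves of the quasi-isometry inequality separately. The upper bound $d_{\Out(\F)}(\mathrm{id},\phi(g)) \le K|g| + L$ is the easy direction: since $\K_n$ is a geometric model for $\Out(\F)$ via the orbit map, and each generator $f_i$ (together with its inverse) moves a fixed basepoint $T_0 \in \K_n^0$ a bounded distance, the word $g = v_{i_1}^{\pm}\cdots v_{i_\ell}^{\pm}$ of length $\ell$ in $A(\Gamma)$ maps to an element moving $T_0$ at most $\ell$ times a uniform constant; this gives the linear upper bound with no conditions on the $f_i$. The whole content is the lower bound: we must show that if $g \in A(\Gamma)$ has syllable length (or word length) $\ell$, then $d_{\Out(\F)}(\mathrm{id},\phi(g))$ is bounded below by a definite fraction of $\ell$ minus a constant, provided the translation lengths $\ell_{A_i}(f_i)$ are large enough.

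The strategy for the lower bound, following \cite{CLM}, is to use the subfactor projections developed in Section~3 as ``coordinates.'' Put $g$ in the standard normal form for $A(\Gamma)$ and consider its syllables; to each syllable $v_i^{k}$ one wants to associate a large projection distance. Concretely, fix a basepoint tree $T_0$ and let $T_g = \phi(g) T_0$; I would show that for a syllable $v_i^{k}$ appearing in $g$, the projection $d_{A_i}(T_0, T_g)$ in the free factor complex $\mathcal{F}(A_i)$ is at least $\ell_{A_i}(f_i)\cdot|k| - (\text{bounded error})$, using that $f_i|_{A_i}$ is fully irreducible (Proposition~\ref{fftrans}), that $c|_{A_i} = 1$ for $f_j$ with $v_j$ in the link of $v_i$ so these commuting generators do not disturb the $\mathcal{F}(A_i)$-coordinate (Proposition~\ref{properties2}(3),(4)), and that the remaining generators act on $A_i$ through the overlap projections which are coarsely Lipschitz (Lemma~\ref{welldefined}, Proposition~\ref{properties2}(1)). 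The key technical inputs here are: the Behrstock-type inequality of Section~6 controlling projections to two overlapping free factors; the ordering on syllables (Sections~7--8) which guarantees that distinct syllables of the same generator $v_i$, or of generators whose factors overlap, contribute to distance \emph{in a definite order} and so cannot cancel; and the statement of Section~9 that normal-form words produce large $\mathcal{F}(A_i)$-projections. Since $\pi_{A_i}$ factors through $\pi_{\S(A_i)}$ which in turn is coarsely realized by moving $T_0$ within $\K_n$, a geodesic in $\K_n^1$ from $T_0$ to $T_g$ must pass through trees realizing each of these large projections, and because overlapping factors project to \emph{distinct} points one can, via the Behrstock inequality and the syllable ordering, add up these contributions: this is exactly the role of Section~10, replacing the missing Masur--Minsky distance formula. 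Summing over all syllables yields $d_{\Out(\F)}(\mathrm{id},\phi(g)) \gtrsim \sum_{\text{syllables}} \ell_{A_i}(f_i)|k_{\text{syl}}| \gtrsim C' \cdot (\text{syllable length of }g)$, and since syllable length is comparable to word length in a RAAG, we get the lower bound once $C$ is chosen large enough to beat the accumulated Lipschitz and Behrstock constants.

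Choosing the constant $C$: run through the above and collect every constant that appears — the Lipschitz constant $4$ of $\pi$, the diameter bounds of Propositions~\ref{properties1} and \ref{properties2}, the Behrstock constant of Section~6, the constants in the Section~9 ``large link projection'' estimate, and the quasi-isometry constants of the $\K_n \leftrightarrow \Out(\F)$ identification — and let $C$ be their (suitably combined) maximum, so that each syllable's genuine contribution $\ell_{A_i}(f_i)|k| \ge C|k|$ dominates the total error charged against it. That $A(\Gamma)$ is indeed the group generated (not a proper quotient) follows from the injectivity half of the same projection estimates, exactly as in Koberda's and \cite{CLM}'s arguments.

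I expect the main obstacle to be the lower bound bookkeeping in Section~10: unlike the surface case there is no distance formula, so one cannot simply say ``distance $\asymp$ sum of projections.'' Instead one must argue directly along a geodesic in $\K_n^1$ that the large $\mathcal{F}(A_i)$-projections attached to different syllables are ``seen'' at disjoint portions of the geodesic — and for overlapping (non-disjoint) factors this requires the Behrstock inequality together with the syllable partial order to certify that the geodesic cannot realize two such projections simultaneously or out of order. Making this ordering argument precise, and verifying that the accumulated error is genuinely bounded (independent of $\ell$) rather than growing, is the crux of the proof.
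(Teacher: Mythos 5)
Your proposal is correct and follows essentially the same route as the paper: a Lipschitz upper bound from the orbit map to $\K_n^1$, per-syllable lower bounds on projections to the active free factors (Theorem \ref{factordistance}), and the Behrstock inequality plus the syllable ordering to produce disjoint, correctly ordered intervals along a $\K_n$-geodesic (Lemmas \ref{order} and \ref{lowerbound}) whose lengths sum to give the lower bound. The only detail to tighten is that one projects to the translated factors $A^g(x_i^{e_i})$ rather than to $A_{J(i)}$ itself, and that pairwise commuting syllables may have overlapping intervals, which is why the multiplicity constant $s(\Gamma)$ enters the final estimate.
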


It is worth noting that since right-angled Artin groups are torsion-free, homomorphisms from $A(\Gamma)$ that are quasi-isometric embeddings are injective. Since it requires a different set of terminology as well as constants that need to be determined, we save the statement and proof that these homomorphisms induce quasi-isometric orbit maps into Outer space for the appendix.

\subsection{Splitting contruction}
Here we present a particular type of graph of groups decompositions of $\mathbb{F}$ to which our theorem applies and use it to construct examples. Let $\mathcal{G}$ be a free splitting of $\mathbb{F}$ along with a family of collapse maps 
$$p_i: \mathcal{G} \to \mathcal{G}_i$$
to splittings $\mathcal{G}_i$, satisfying the following conditions: 
\begin{enumerate}
\item Each splitting $\mathcal{G}_i$ has a preferred vertex $v_i \in \mathcal{G}_i$ so that all edges of $\mathcal{G}_i$ are incident to $v_i$.
\item Setting $G_i = p^{-1}(v_i) \subset \mathcal{G}$ we require that for $i \neq j$ one of the two following conditions hold: either $(i)$ $G_i$ and $G_j$ are \emph{disjoint}, meaning that $G_i \cap G_j = \emptyset$, or $(ii)$ $G_i \cap G_j$ is a subgraph whose induced subgroup is nontrivial and proper in each of the subgroups induced by $G_i$ and $G_j$. In the latter case, we say the subgraphs \emph{overlap}.
\end{enumerate}

We call the splitting $\mathcal{G}$ satisfying these conditions a \emph{support graph} and note that the above data is determined by the collection of subgraphs $G_i$. For such a splitting of $\mathbb{F}$, we set $A_i = \pi_1(G_i) = (\mathcal{G}_i)_{v_i} \in \mathcal{FF}$, the vertex groups of the vertex $v_i$ in $\mathcal{G}_i$. It is clear from the above conditions that such a collection of free factors forms an admissible collection $\mathcal{A}(\mathcal{G})$ and that $\Gamma_{\mathcal{A}(\mathcal{G})}$ is precisely the coincidence graph of the $G_i$ in $\mathcal{G}$ .

Next, consider the outer automorphisms that will generate the image of our homomorphism. For each $i$, chose an $f_i \in \Out(\F_n)$ which preserves the splitting  $\mathcal{G}_i$, induces the identity automorphism on the underlying graph of $\mathcal{G}_i$, and restricts to the identity on the complement of $v_i$ in $\mathcal{G}_i$. In this case, we say that $f_i$ is \emph{supported} on $G_i$ (or $v_i$) and if the restriction of $f_i$ to the free factor $A_i$ is fully irreducible, we say that $f_i$ is \emph{fully supported} on $G_i$ (or $v_i$). With these choices, the pair $\mathcal{S}(\mathcal{G}) =(\mathcal{A}(\mathcal{G}), \{f_i\})$ is an admissible system.  
Indeed, the only condition to check is that if $v_i$ and $v_j$ represent disjoint free factors, then the outer automorphisms $f_i$ and $f_j$ commute. Observe that since $G_i$ and $G_j$ are disjoint subgraphs of $\mathcal{G}$ we may collapse each to a vertex to obtain a common refinement $\mathcal{G}_{ij}$ of $\mathcal{G}_i$ and $\mathcal{G}_j$ which has vertices with associated groups $(\mathcal{G}_i)_{v_i}$ and $(\mathcal{G}_j)_{v_j}$. Label these vertices of $\mathcal{G}_{ij}$ $v_i$ and $v_j$ corresponding to the subgraphs $G_i$ and $G_j$ of $\mathcal{G}$. From the fact that $f_i$ and $f_j$ are supported on $G_i$ and $G_j$, respectively, it follows that they both stabilize the common refinement $\mathcal{G}_{ij}$ and are each supported on distinct vertices, namely $v_i$ and $v_j$. This implies that $f_i$ and $f_j$ commute in $\Out(\F)$. Hence,  $\mathcal{S}(\mathcal{G}) =(\mathcal{A}(\mathcal{G}), \{f_i\})$ is an admissible system inducing a homomorphism
$$\phi_{\mathcal{S}(\mathcal{G})}: A(\Gamma_{\mathcal{G}}) \to \Out(\mathbb{F})$$
given by
$$v_i \mapsto f_i $$
as before.
With this setup, our main result can be restated as follows:

\begin{corollary}\label{supportgraphversion}
Suppose $\mathcal{G}$ is a free splitting of $\F$ that is a support graph with subgraphs $G_i$ for $1 \le i \le n$ having coincidence graph $\Gamma$.  There is a $C \ge 0$ so that if for each $i$, $f_i \in \Out(\F)$ is fully supported on $G_i$ with $\ell_{A_i}(f_i) \ge C$, then the induced homomorphism $\phi_{\mathcal{S}(\mathcal{G})} : A(\Gamma) \to \Out(\F)$ is a quasi-isometric embedding.
\end{corollary}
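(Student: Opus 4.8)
The plan is to derive Corollary~\ref{supportgraphversion} as a special case of Theorem~\ref{main}. The content of the corollary is essentially a verification that the data of a support graph $\mathcal{G}$ really does produce an admissible system in the sense required by Theorem~\ref{main}, so that the machinery proving the main theorem applies verbatim. Concretely, I would first observe that the subgraphs $G_i = p_i^{-1}(v_i) \subset \mathcal{G}$ carry free factors $A_i = \pi_1(G_i)$ of $\F$, and that the two alternatives in the definition of a support graph (disjoint subgraphs, or subgraphs overlapping in a subgraph whose induced subgroup is nontrivial and proper in each) translate directly into the two alternatives in the definition of an admissible collection. When $G_i$ and $G_j$ are disjoint subgraphs of $\mathcal{G}$, collapsing all edges outside $G_i \cup G_j$ exhibits $A_i$ and $A_j$ as distinct vertex groups of a common splitting, so they are disjoint free factors. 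When $G_i$ and $G_j$ overlap as subgraphs, the induced subgroup of $G_i \cap G_j$ is, up to conjugacy, a nontrivial proper subgroup of both $A_i$ and $A_j$, giving $\pi_{A_i}(A_j) \neq \emptyset \neq \pi_{A_j}(A_i)$; hence $\mathcal{A}(\mathcal{G})$ is an admissible collection, and by construction its coincidence graph is the coincidence graph $\Gamma$ of the $G_i$ in $\mathcal{G}$.

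Next I would record that the outer automorphisms $f_i$ chosen in the construction — those preserving $\mathcal{G}_i$, acting as the identity on its underlying graph, and restricting to the identity off $v_i$ — are supported on $A_i$ in the sense of Section~\ref{homomorphisms}: such an $f_i$ fixes every vertex group of $\mathcal{G}_i$ other than $A_i$, and in particular fixes every $A_j$ that is disjoint from $A_i$ and acts trivially on it (since the collapse from $\mathcal{G}_{ij}$ realizes $A_j$ as a vertex group disjoint from the support of $f_i$). Combined with the full irreducibility of $f_i|_{A_i}$, this says $f_i$ is fully supported on $A_i$. The commutation of $f_i$ and $f_j$ for $v_i, v_j$ adjacent in $\Gamma$ is exactly the computation already carried out in the paragraph preceding the corollary: pass to the common refinement $\mathcal{G}_{ij}$ of $\mathcal{G}_i$ and $\mathcal{G}_j$, on which $f_i$ and $f_j$ are supported on the distinct vertices $v_i$ and $v_j$, so they commute. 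Therefore $\mathcal{S}(\mathcal{G}) = (\mathcal{A}(\mathcal{G}), \{f_i\})$ is an admissible system, and the induced homomorphism $\phi_{\mathcal{S}(\mathcal{G})} \colon A(\Gamma) \to \Out(\F)$ is precisely the homomorphism $\phi_{\mathcal{S}}$ of Theorem~\ref{main} for this system.

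With these identifications in place, the corollary follows immediately: Theorem~\ref{main} supplies a constant $C \ge 0$, depending only on the admissible collection $\mathcal{A}(\mathcal{G})$ (equivalently, only on the support graph $\mathcal{G}$ and its subgraphs $G_i$), such that whenever the $f_i$ are chosen with $\ell_{A_i}(f_i) \ge C$ the map $\phi_{\mathcal{S}(\mathcal{G})}$ is a quasi-isometric embedding. Since the $f_i$ produced by the construction are fully supported on the $G_i$ and satisfy the commutation relations, the hypothesis $\ell_{A_i}(f_i) \ge C$ in the corollary is exactly the hypothesis $\ell_{A_i}(f_i) \ge C$ in Theorem~\ref{main}, and we are done.

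The proof is genuinely just a translation, so there is no serious obstacle; the only point requiring care is making sure the word ``disjoint'' for subgraphs of $\mathcal{G}$ and the word ``disjoint'' for free factors of $\F$ genuinely match up — that is, that disjoint subgraphs really do give vertex groups of a common splitting and, conversely, that this is what the coincidence graph $\Gamma$ is built from — and similarly that ``overlap'' for subgraphs implies ``overlap'' for free factors (not merely ``meet''), which is needed so that the admissible collection genuinely satisfies condition~(2) of the definition of admissibility. Both of these are essentially bookkeeping with the collapse maps $p_i$, but they should be stated explicitly so that the appeal to Theorem~\ref{main} is unambiguous.
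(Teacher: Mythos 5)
Your proposal is correct and follows the same route as the paper: the paper's justification for this corollary is exactly the verification, carried out in the paragraphs preceding its statement, that $\mathcal{A}(\mathcal{G})$ is an admissible collection with coincidence graph $\Gamma$ and that the $f_i$ supported on the $G_i$ commute when $G_i$ and $G_j$ are disjoint (via the common refinement $\mathcal{G}_{ij}$), after which Theorem \ref{main} applies directly. Your added remark that overlapping subgraphs must yield \emph{overlapping} (not merely meeting) free factors is a worthwhile point to make explicit, but it does not change the argument.
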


We remark that once a support graph $\mathcal{G}$ is constructed with $\pi_1 \mathcal{G} = \F_n$, there is no obstruction to finding $f_i$ fully supported on $G_i$ with large translation length on $\mathcal{F}(A_i)$. Corollary \ref{supportgraphversion} then implies that there exist homomorphisms $\phi_{\mathcal{S}(\mathcal{G})} : A(\Gamma) \to \Out(\F_n)$ which are quasi-isometric embeddings.

\subsection{Constructions and applications}
We use our main theorem to construct quasi-isometric homomorphisms into $\Out(\mathbb{F}_n)$ beginning with an arbitrary right-angled Artin group $A(\Gamma)$. We provide a bound on $n$ given a measurement of complexity of $\Gamma$.

First, it is an easy matter to use the splitting construction to start with a graph $\Gamma$ and find a quasi-isometric embedding $A(\Gamma) \to \Out(\F_n)$, with $n$ depending on $\Gamma$. We first illustrate this with an example and then give a general procedure. Note that although using the splitting construction is simple, it will always require that $n$ is rather large compared to $\Gamma$. As demonstrated in Example $2$, more creative choices of admissible system can, however, be used to reduce $n$.

\begin{example} 
Let $\Gamma = \Gamma_5$ be the pentagon graph with vertices labeled counter-clockwise $v_0, v_2, v_4, v_1, v_3$ as in Figure \ref{fig1}, and let $\Gamma^c$ be the same graph with vertices labeled cyclically $v_0, \ldots, v_4$. Take $\mathcal{G}$ to be the graph of groups with underlying graph $(\Gamma^c)'$, the barycentric subdivision of $\Gamma^c$, with trivial vertex group labels on the vertices of $\Gamma^c$ and infinite cyclic group labels on the subdivision vertices. Note that $\pi_1 \mathcal{G} = \F_6$. Set $G_i$ $(1\le i \le 4)$ equal to the subgraph of $\mathcal{G}$ consisting of the vertex labeled $v_i$, its two adjacent subdivision vertices, and the edges joining these vertices to $v_i$. Observe that $G_i$ and $G_j$ have empty intersection if and only if $v_i$ and $v_j$ are joined by an edge in $\Gamma$, and if $G_i$ and $G_j$ intersect then their intersection is a vertex with nontrivial vertex group. Hence, $\mathcal{G}$ is a support graph with subgraphs $G_i$ whose coincidence graph is $\Gamma$ and so by Corollary \ref{supportgraphversion} there is a constant $C$ such that choosing any collection of outer automorphisms $f_i$ fully supported on the collection $G_i$ with $\ell_{A_i}(f_i) \ge C$ determines a homomorphism $A(\Gamma_5) \to \Out(\F_6)$ that is a quasi-isometric embedding.  In Example $2$, we improve this construction by modifying $\mathcal{G}$.\\
\end{example}

Now fix any simplicial graph $\Gamma$ with $n$ vertices labeled $v_1, \ldots, v_n$. We give a general procedure for producing a support graph $\mathcal{G}$ with subgraphs $G_i$ whose coincidence graph is $\Gamma$. By Corollary \ref{supportgraphversion}, this allows one to construct homomorphisms $A(\Gamma) \to \Out(\pi_1(\mathcal{G}))$ which are quasi-isometric embeddings for any right-angled Artin group. First, assume that the \emph{complement graph} $\Gamma^c$ is connected. This is the subgraph of the complete graph on the vertices of $\Gamma$ with edge set given by the complement of the edge set of $\Gamma$. Let $(\Gamma^c)'$ be the barycentric subdivision of $\Gamma^c$ where we reserve labels $v_i$ for the vertices of $(\Gamma^c)'$ that are vertices of $\Gamma^c$ and label the vertex of $(\Gamma^c)'$ corresponding to the edge $(v_i,v_j)$ of $\Gamma^c$ by $v_{ij}$. Hence, in $(\Gamma^c)'$ the vertex $v_{ij}$ is valence two and is connected by an edge to both $v_i$ and $v_j$. Set $G_i$ equal to the star of the vertex $v_i$ in $(\Gamma^c)'$, i.e. $G_i$ is the union of edges incident to $v_i$ together with their vertices. Now take $\mathcal{G}$ to be the graph of groups with underlying graph $(\Gamma^c)'$  and infinite cyclic vertex group labels for each vertex $v_{ij}$, $i\neq j$. For vertices $v_i$ there are two cases for vertex groups: If $v_i$ has valence one in $\mathcal{G}$ then we label it with infinite cyclic vertex group and otherwise we give it trivial vertex group. 

With these vertex groups, $\mathcal{G}$ becomes of graph of groups decomposition for $\F_n$ which is a support graph for the collection of subgraphs $G_i$ with coincidence graph $\Gamma$. Indeed, $G_i$ and $G_j$ have nonempty intersection in $\mathcal{G}$ if and only if $v_i$ and $v_j$ are joined by and edge in $\Gamma^c$. When this is the case, their intersection is a single vertex with infinite cyclic vertex group and this vertex group is proper in each of the groups induced by $G_i$ and $G_j$. We can also calculate the rank of $\pi_1 \mathcal{G}$. By construction, the rank of $\pi_1 \mathcal{G}$ is equal to the rank of the fundamental group of the underlying graph plus the number of nontrivial vertex groups on $\mathcal{G}$. Since there is a nontrivial vertex group for each edge of $\Gamma^c$ and each vertex of $\Gamma^c$ of valence one, the rank of $\pi_1 \mathcal{G}$ equals 
$$1 +2 |\mathrm{E}(\Gamma^c)| - |\mathrm{V}(\Gamma^c)| + |\text{valence 1 vertices of }\Gamma^c|. $$
Translating this into a function of $\Gamma$, we see that the rank of $\pi_1 \mathcal{G}$ is
$$1+ |\mathrm{V}(\Gamma)| \cdot (|\mathrm{V}(\Gamma)| -2) - |\mathrm{E}(\Gamma)| + |\text{valence $n-2$ vertices of } \Gamma|, $$
and we refer to this quantity as the complexity of $\Gamma$, denoted $c(\Gamma)$.

When $\Gamma^c$ is not connected it decomposes into components $\Gamma^c = \sqcup_{i=1}^l \Delta_i$ and it is not difficult to show that $A(\Gamma) = A(\Delta_1^c) \times \ldots \times A(\Delta_l^c)$. In this case, we set $c(\Gamma) = \sum_i c(\Delta_i^c)$ and the corresponding supported graph is constructed as follows: Let $\mathcal{G}(\Delta_i^c)$ be the support graph constructed as above for the graph $\Delta_i^c$, or any support graph with coincidence graph $\Delta_i^c$. Take $\mathcal{G}$ to be the support graph built by taking the wedge of $l$ intervals (at one endpoint of each) and attaching the other endpoint of the $i$th interval to an arbitrary vertex of $\mathcal{G}(\Delta_i^c)$. The graph of groups structure on $\mathcal{G}$ is induced by that of $\mathcal{G}(\Delta_i^c)$ with a trivial group label at the wedge vertex. Then $\mathcal{G}$ is a support graph with coincidence graph $\Gamma$ and complexity $c(\Gamma)$. As noted above, the existence of a support graph with coincidence graph $\Gamma$ implies the following (the statement about Outer space is established in the appendix):

\begin{corollary}
For any simplicial graph $\Gamma$,  $A(\Gamma)$ admits a homomorphism into $\Out(\F_n)$, with $n \le c(\Gamma)$, which is a quasi-isometric embedding, and for which the orbit map to Outer space is a quasi-isometric embedding.
\end{corollary}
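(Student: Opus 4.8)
The plan is simply to assemble what is already in place: the support-graph construction described above, together with Corollary \ref{supportgraphversion} and the Outer space version of Theorem \ref{main} established in the appendix.

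First I would feed the given graph $\Gamma$ into the general procedure above. If $\Gamma^c$ is connected this produces the support graph $\mathcal{G}$ on $(\Gamma^c)'$ with the prescribed trivial and infinite cyclic vertex groups; if $\Gamma^c$ is disconnected one wedges together the support graphs of its components along a tree. In either case one obtains a support graph $\mathcal{G}$ whose coincidence graph for the subgraphs $G_i$ is $\Gamma$, whose associated free factors $A_i = \pi_1(G_i)$ form an admissible collection, and for which $\pi_1(\mathcal{G})$ has rank exactly $c(\Gamma)$ by the rank computation carried out above. Taking $n = c(\Gamma)$ gives $n \le c(\Gamma)$, as required. (Equality holds for this construction; as Example $2$ indicates, a more economical admissible system can sometimes be used to lower $n$, but that refinement is not needed here.)

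Next, as observed in the remark preceding the statement, once such a support graph $\mathcal{G}$ with $\pi_1(\mathcal{G}) = \F_n$ is in hand there is no obstruction to choosing outer automorphisms $f_i$ fully supported on the $G_i$ with $\ell_{A_i}(f_i)$ as large as we like (each $A_i$ has rank $\ge 2$, so $\Out(A_i)$ contains fully irreducibles, and these can be taken to have arbitrarily large translation length on $\mathcal{F}(A_i)$ after replacing them by powers). Applying Corollary \ref{supportgraphversion}, there is a constant $C \ge 0$ so that any such choice with $\ell_{A_i}(f_i) \ge C$ makes the induced homomorphism $\phi_{\mathcal{S}(\mathcal{G})} \colon A(\Gamma) \to \Out(\F_n)$ a quasi-isometric embedding. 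Finally, the statement about Outer space follows from the appendix, whose version of Theorem \ref{main} asserts that for this same admissible system the orbit map $A(\Gamma) \to \X_n$ is a quasi-isometric embedding.

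There is no real obstacle here: the corollary is a packaging of earlier results. The only point that genuinely relies on prior work is the simultaneous realizability of the support constraints together with the lower bound on translation length, which the remark before the statement already disposes of; the fully degenerate inputs ($\Gamma$ empty, so $A(\Gamma)$ trivial, or $\Gamma$ a single vertex, so $A(\Gamma) \cong \mathbb{Z}$) can be handled by hand.
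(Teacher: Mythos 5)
Your proposal is correct and follows the paper's own route exactly: build the support graph on $(\Gamma^c)'$ (wedging components when $\Gamma^c$ is disconnected), note that the rank of $\pi_1\mathcal{G}$ is $c(\Gamma)$, and then invoke Corollary \ref{supportgraphversion} together with the appendix for the Outer space statement. No discrepancies to report.
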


The next example shows how Theorem \ref{main} can be used to give quasi-isometric embeddings into $\Out(\F_n)$ for smaller $n$ than using support graphs, as in our construction above.

\begin{figure}[htbp]
\begin{center}
\includegraphics[height=50mm]{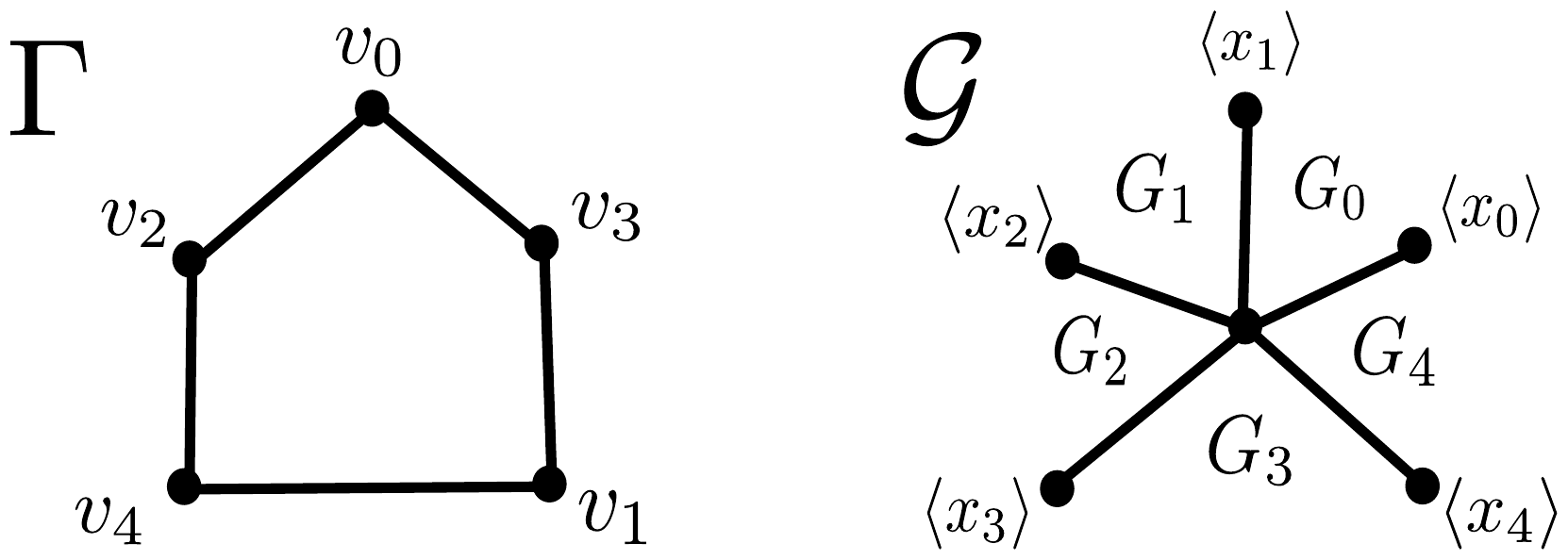}
\caption{$\mathbb{F}_5 = \pi_1(\mathcal{G})$}
\label{fig1}
\end{center}
\end{figure}

\begin{example}
Again, let $\Gamma = \Gamma_5$ be the pentagon graph with vertices labeled counter-clockwise $v_0, v_2, v_4, v_1, v_3$ as in Figure \ref{fig1}. Take $\mathcal{G}$ as in Figure \ref{fig1}. This is a graph of groups decomposition for $\F_5$; the  central vertex has trivial vertex group and the $5$ valence one vertices joined to the central vertex each have infinite cyclic vertex group, with generators labeled $x_0 \ldots, x_4$. $\mathcal{G}$ can be thought of as a ``folded'' version of the support graph that appears in Example $1$. For $0\le i \le4$, let $G_i$ be the smallest connected subgraph containing the vertices labeled $x_i$ and $x_{i+1}$, with indices taken mod $5$. Note that $\mathcal{G}$ together with the subgraphs $G_i$ is \emph{not} a support graph; for example $G_0$ and $G_2$ intersect in a vertex with trivial vertex group. Despite this, for $i = 0, \ldots ,4$, $A_i =\pi_1G_i = \langle x_i ,x_{i+1} \rangle$ does form an admissible collection of free factors with coincidence graph $\Gamma_5$. Hence, by Theorem \ref{main} there exists a $C \ge 0$ so that if there are outer automorphisms $f_i \in \Out(\F_5)$ making $(\{A_i\}, \{f_i\})$  an admissible system with $\ell_{A_i}(f_i) \ge C$ then the induced homomorphism $\phi: A(\Gamma_5) \to \Out(\F_5)$ is a quasi-isometric embedding. Choosing such a collection in this case is straightforward. Specifically, let $B_i = \langle x_{i+2}, x_{i+3}, x_{i+4} \rangle$ and choose $f_i \in \Out(\mathbb{F}_5)$ for $i= 0, \ldots, 4$ so that 
\begin{enumerate}
\item $f_i([A_i]) = [A_i]$ and $f_i([B_i]) = [B_i]$,
\item the restriction $f_i|{A_i} \in  \Out(A_i)$ is fully irreducible with $\ell_{A_i}(f_i) \ge C$, and
\item the restriction $f_i|{B_i} = 1 \in  \Out(B_i)$.
\end{enumerate}
With these choices, it is clear that each $f_i$ is fully supported on $A_i$ and that $f_i$ and $f_j$ commute if and only if $v_i$ and $v_j$ are joined by an edge of $\Gamma$. This makes $\mathcal{S} =(\{A_i\}, \{f_i\})$ into an admissible system with $\ell_{A_i}(f_i)\ge C$ and so the induced homomorphism
$$\phi_{\mathcal{S}}: A(\Gamma_5) \to \Out(\F_5) $$
is a quasi-isometric embedding. In fact, as we shall see in the proof of the main theorem, the required translation length is simple to determine. Further, as each of free factors $A_i$ in the admissible system is rank $2$, the free factor complex $\mathcal{F}(A_i)$ is the Farey graph where translation lengths can be computed.

As an application, it is well-known that $A(\Gamma_5)$ contains quasi-isometrically embedded copies of $\pi_1(\Sigma_2)$, the fundamental group of the closed genus $2$ surface (see \cite{CrispWiest}). Restricting the homomorphism constructed above to such a subgroup we obtain quasi-isometric embeddings 
$$\pi_1(\Sigma_2) \to \Out(\F_5). $$
As we will see in the appendix, these homomorphisms can be chosen to give quasi-isometric orbit maps into Outer space, $\mathcal{X}_5$.
\end{example}

\begin{example} 
Let $\F_3 = \langle a,b,c \rangle$ and set $A_i = \langle a,b^ic \rangle$ for $i \ge 0$. It is not difficult to verify that for $i \neq j$, $\pi_{A_i}(A_j) = \{[a]\}$ and $\langle A_i, A_j \rangle = \langle a,b^{j-i}, b^{j}c \rangle \cong A_i *_{\langle a \rangle} A_j$. See Stallings's paper \cite{st83} for how to efficiently compute such intersections. Hence, for $N \ge 1$, the collection $\mathcal{A}_N = \{A_i :  0\le i \le N\}$ is an admissible collection of pairwise overlapping free factors and so there are $C_N$ such that choosing any collection of outer automorphisms $\{f_i : 0 \le i \le N\}$ fully supported on the collection $\mathcal{A}_N$ with $\ell_{A_i}(f_i) \ge C_N$ determines a homomorphism $\phi_N: \F_N \to \Out(\F_3)$ that is a quasi-isometric embedding.  In fact, we will see in the proof of Theorem \ref{main} that we may take $C = C_N$ to be constant over all $N$ and obtain a uniform lower bound on $\Out(\F_3)$-word length of $\phi_N(x)$ in terms of word length of $x$ in $\F_N$, independent of $N$.
\end{example}

\section{Splittings and submanifolds}
It is important to have a topological interpretation of our projections in order to prove the version of Behrstock's inequality that appears in the next section. We first review some facts about embedded surfaces in $3$-manifolds and the splittings they induce.

\subsection{Surfaces and splittings}\label{surfacesandsplittings}
It is well-known that codimension $1$ submanifolds induce splittings of the ambient manifold group \cite{Shalenrep}. We review some details here, focusing on the case when then inclusion map is not necessarily $\pi_1$-injective.

For our application, begin with an orientable, connected $3$-manifold $X$ possibly with boundary and a property embedded, orientable surface $F$. We do not require that $F$ is connected or that each component of $F$ is $\pi_1$-injective. Working, for example, in the smooth setting, choose a tubular neighborhood  $N \cong F \times I$ of $F$ in $X$ whose restriction $N \cap \partial X$ is a tubular neighborhood of the boundary of $F$ in $\partial X$. Let $G$ denote the graph \emph{dual} to $F$ in $X$. This is the graph with a vertex for each component of $X \setminus \mathrm{int}(N)$ and an edge $e_f$ for each component $f \subset F$ that joins vertices corresponding to the (not necessarily distinct) components on either side of $f$. We may consider $G$ as embedded in $X$ and, after choosing an appropriate embedding, $G$ is easily seen to be a retract of $X$. The retraction is obtained by collapsing each complementary component of $N$ to its corresponding vertex and projecting $f \times I$ to $I$ for each component $f$ of $F$. Here, $I$ is the closed interval $[-1,1]$ and $f \times \{0\}$ corresponds  under the identification $N \cong F \times I$ to $f \subset N$.

Let $\tilde{X}$ denote the universal cover of $X$ and let $\tilde{N}$ and $\tilde{F}$ denote the complete preimage of $N$ and $F$, respectively. Let $T_F$ denote the graph dual to $\tilde{F}$ in $\tilde{X}$. Since $T_F$ is a retract of the connected and simply connected space $\tilde{X}$, $T_F$ is a tree. We call $T_F$ the \emph{dual tree} to the surface $F$ in $X$. As $\tilde{F}$ and $\tilde{X} \setminus \tilde{N}$ are permuted by the action of $\pi_1(X)$, we obtain a simplicial action $\pi_1(X) \curvearrowright T_F$, up to the usual ambiguity of choosing basepoints. The following is an easy exercise in covering space theory; it appears in \cite{Shalenrep}.

\begin{proposition}\label{stabilizers}
With the above notation, let $v$ be a vertex of $T_F$ corresponding to a lift of a component $C \subset X \setminus N$ and $e$ an edge of $T_F$ corresponding to a lift of a component $f \subset F$. Then
\begin{enumerate}
\item stab$(v) = \mathrm{im}(\pi_1 C \to \pi_1X)$
\item stab$(e) = \mathrm{im}(\pi_1 f \to \pi_1X)$
\end{enumerate}
where both equalities are up to conjugation in $\pi_1X$.
\end{proposition}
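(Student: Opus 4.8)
The plan is to argue directly from covering space theory, using the retraction $r: \tilde X \to T_F$ described above to organize point preimages. First I would fix basepoints compatibly: pick a basepoint $\tilde x_0$ in a lift $\tilde C$ of a complementary component $C \subset X \setminus \mathrm{int}(N)$, let $x_0$ be its image in $X$, and identify $\pi_1(X,x_0)$ with the deck group of $\tilde X \to X$ in the usual way. The vertex $v$ of $T_F$ corresponding to $\tilde C$ is then $r(\tilde C)$, and an element $g \in \pi_1(X)$ fixes $v$ if and only if its deck transformation preserves the preimage $r^{-1}(v)$, which is precisely the (open) complementary component $\tilde C$ (thickened). So $\mathrm{stab}(v) = \{\, g : g \tilde C = \tilde C \,\}$, the stabilizer of the component $\tilde C$ under the deck action.

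The key step is then the standard identification of the stabilizer of a connected component of the preimage of a subspace. Since $\tilde C$ is a connected component of the full preimage of $C$ under $\tilde X \to X$, covering space theory gives that $\mathrm{stab}(\tilde C)$ is conjugate (by the path from $x_0$ to the chosen lift of the basepoint of $C$, which is trivial with our basepoint choice) to the image of $\pi_1(C) \to \pi_1(X)$ induced by inclusion — here I would invoke that $\tilde X \to X$ restricted to $\tilde C \to C$ is itself the covering corresponding to the subgroup $\ker(\pi_1 C \to \pi_1 X)$, being the restriction of the universal cover, so the deck group acting on this component is exactly $\mathrm{im}(\pi_1 C \to \pi_1 X)$. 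For a different choice of lift $\tilde C'$ of $C$, one conjugates by any $g$ with $g\tilde C = \tilde C'$, which accounts for the "up to conjugacy" clause. The edge case is identical verbatim: $r^{-1}(e)$ for the edge $e$ of $T_F$ is a thickened lift $\tilde f \times I$ of a component $f \subset F$, where $\tilde f$ is a connected component of the preimage of $f$, and $g$ fixes $e$ iff $g$ preserves this component, giving $\mathrm{stab}(e) = \mathrm{im}(\pi_1 f \to \pi_1 X)$ up to conjugacy by the same reasoning. I should also note that fixing a vertex (resp.\ edge) of $T_F$ setwise forces fixing it pointwise, since the action is simplicial and a deck transformation preserving a component of $\tilde X \setminus \tilde N$ cannot permute the complementary pieces it meets — but this is a minor point, as the content of the proposition is the stabilizer computation, and for the splitting $\pi_1(X) \curvearrowright T_F$ one only needs the setwise stabilizers.

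The main obstacle, such as it is, is bookkeeping rather than mathematics: one must be careful that $\tilde C$ (respectively $\tilde f$) really is a single connected component of the preimage in $\tilde X$ — this uses that $\tilde X$ is connected so that $r^{-1}(v)$ and $r^{-1}(e)$ are connected, which in turn relies on point preimages of the retraction being connected and on $N \cong F \times I$ being a genuine product neighborhood — and that the "up to conjugacy" ambiguity is exactly the ambiguity of choosing a lift, matching the ambiguity already present in choosing basepoints for the action $\pi_1(X) \curvearrowright T_F$. Since both $X$ and $F$ may fail to be $\pi_1$-injective, I would emphasize that nothing in the argument requires injectivity of $\pi_1 C \to \pi_1 X$ or $\pi_1 f \to \pi_1 X$; the statement is about the image, and the image is computed the same way regardless. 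This is why the result is quoted from \cite{Shalenrep} as an exercise, and I would present it as such — a clean application of the correspondence between connected components of preimages and conjugacy classes of subgroups.
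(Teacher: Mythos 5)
The paper offers no proof of this proposition at all — it is stated as "an easy exercise in covering space theory" with a citation to \cite{Shalenrep} — so there is nothing to diverge from. Your argument is the standard and correct one: identify $\mathrm{stab}(v)$ with the setwise stabilizer of the component $\tilde C$ of the preimage of $C$ under the deck action, and use the correspondence between components of preimages and cosets of $\mathrm{im}(\pi_1 C \to \pi_1 X)$; your attention to the conjugacy ambiguity and to the fact that $\pi_1$-injectivity is not needed is exactly right.
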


The action $\pi_1X \curvearrowright T_F$ provides a splitting of $\pi_1X$ via Basse-Serre theory. The corresponding graph of groups decomposition of $\pi_1X$ has underlying graph $G = T_F / \pi_1X$, the graph dual to $F \subset X$, with vertex and edge groups as given in Proposition \ref{stabilizers}. A subgraph $G' \subset G$ \emph{carries} a subgroup $H \le \pi_1X$ if the subgroup induced by the subgraph $G'$ contains $H$, up to conjugacy. 

Now specialize to the situation where the action $\pi_1X \curvearrowright T_F$ has trivial edge stabilizers. The following proposition determines when the dual tree to a surface is minimal. First, say that a connected component $f \subset F$ is \emph{superfluous} if $f$ separates $X$ and to one side bounds a relatively simply connected submanifold, i.e. $X \setminus f = X_1 \sqcup X_2$ and $\text{im}(\pi_1(X_1) \to \pi_1(X)) = 1$. A component of $F$ that is not superfluous is said to \emph{split} $X$. Also, use the notation $T^{\mathrm{min}}$ to denote the unique minimal subtree associated to an action on the tree $T$, see Section \ref{basics}.

\begin{proposition}\label{minimal}
Let $F$ be an orientable, properly embedded surface in the orientable $3$-manifold $X$ with $\mathrm{im}(\pi_1 f \to \pi_1X) = 1$ for each components $f$ of $F$. Then the edge $e_f \subset T$ corresponding to a lift of the component $f \subset F$ is contained in the minimal subtree $T_F^{\mathrm{min}}$ if and only if $f$ splits $X$.

\end{proposition}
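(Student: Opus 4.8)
The plan is to transfer everything to the dual tree $T_F$ and the action $\pi_1 X \curvearrowright T_F$, which by the hypothesis on $F$ is a \emph{free} splitting (trivial edge stabilizers, by Proposition \ref{stabilizers}). Fix a lift $\tilde e$ of the component $f$, so $\tilde e$ is the edge $e_f$ of the statement; since $T_F^{\mathrm{min}}$ is $\pi_1 X$-invariant, whether $\tilde e \in T_F^{\mathrm{min}}$ is independent of the chosen lift. I would prove the two implications separately, proving the forward direction in contrapositive form: $f$ \emph{superfluous} $\Rightarrow$ $\tilde e\notin T_F^{\mathrm{min}}$.

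For the forward direction, write $X = X_1\cup_f X_2$ with $\mathrm{im}(\pi_1 X_1\to\pi_1 X)=1$. The key input is a covering-space observation: because this image is trivial, each component of $p^{-1}(X_1)\subset\tilde X$ is an \emph{embedded} copy of $X_1$, and $X_1$ meets the rest of $X$ only along $f$, so each such copy meets the rest of $\tilde X$ along a single lift of $f$. Consequently the half-tree of $\tilde e$ on the $X_1$-side is nothing but the dual graph of $F\cap\mathrm{int}(X_1)$ inside one copy of $X_1$, and — this is the crucial point — it contains no further lift of the component $f$. Hence no $\pi_1 X$-translate of $\tilde e$ lies in that half-tree. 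But if $\tilde e$ lay on the axis of a hyperbolic element $g$ with translation length $L\ge 1$, then $g^{-1}\tilde e$ and $g\tilde e$ would be translates of $\tilde e$ in the two half-trees of $\tilde e$; since $T_F^{\mathrm{min}}$ is the union of such axes (see the caveat below), this forces $\tilde e\notin T_F^{\mathrm{min}}$.

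For the converse, suppose $f$ splits $X$, and collapse in $T_F$ all edges outside the orbit of $\tilde e$ to obtain a one-edge free splitting $\bar T$ of $\pi_1 X$ with collapse map $c\colon T_F\to\bar T$. By Bass--Serre theory $\bar T$ realizes $\pi_1 X$ as $A*B$ over the trivial group when $f$ separates (with $A,B$ the images of $\pi_1 X_1,\pi_1 X_2$), or as an HNN extension over the trivial group when $f$ does not. The hypothesis that $f$ splits $X$ is exactly the statement that $\bar T$ is a \emph{nontrivial} splitting: the non-separating case is always nontrivial (one gets a genuine free $\mathbb Z$-factor), and in the separating case non-triviality means $A\ne 1\ne B$, i.e.\ $f$ is not superfluous. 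A nontrivial one-edge free splitting is minimal, so $c(T_F^{\mathrm{min}})$, being a nonempty $\pi_1 X$-invariant subtree of $\bar T$, is all of $\bar T$; in particular some edge of $T_F^{\mathrm{min}}$ maps onto the uncollapsed edge of $\bar T$, hence is a translate of $\tilde e$, and by invariance $\tilde e\in T_F^{\mathrm{min}}$.

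The main obstacle is the forward direction — precisely the covering-space bookkeeping that a superfluous $f$ cuts off a region whose lifts to $\tilde X$ are genuinely embedded and attached along a single lift of $f$, so that the relevant half-tree of $T_F$ acquires no new translate of $\tilde e$. A minor secondary point is the standing identification of $T_F^{\mathrm{min}}$ with the union of translation axes: since edge stabilizers are trivial, no nontrivial element fixes an edge path, so the action cannot fix an end without fixing a point; thus either this identification holds, or $T_F^{\mathrm{min}}$ contains no edge, in which case one checks directly (the dual graph is then a tree with a single vertex carrying all of $\pi_1 X$) that no component of $F$ splits $X$ and the statement is vacuous.
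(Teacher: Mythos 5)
Your argument is correct, but it takes a genuinely different route from the paper's. The paper works entirely downstairs in the quotient graph of groups $G = T_F/\pi_1X$ and its core $G^{\mathrm{min}} = T_F^{\mathrm{min}}/\pi_1X$: since $G^{\mathrm{min}}$ carries all of $\pi_1X$, an edge of $G$ lies outside $G^{\mathrm{min}}$ exactly when it separates off a subtree all of whose vertex groups are trivial, and this condition translates verbatim into $f$ being superfluous; both implications are read off from this in two short paragraphs. You instead argue upstairs in $T_F$: for the forward direction you combine the union-of-axes description of the minimal subtree with the covering-space observation that a superfluous side lifts to embedded copies of $X_1$ attached along a single lift of $f$, so that one half-tree of $\tilde e$ contains no translate of $\tilde e$ and hence $\tilde e$ lies on no axis; for the converse you collapse to the one-edge splitting dual to $f$ alone and invoke minimality of nontrivial one-edge free splittings. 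Both routes are elementary Bass--Serre theory. The paper's is shorter because the phrase ``carries $\pi_1X$'' packages exactly the dichotomy you unpack by hand, while yours makes the geometric content explicit (where translates of $\tilde e$ can live) and does not presuppose the correspondence between invariant subtrees and core subgraphs. The two caveats you flag --- the degenerate case where $T_F^{\mathrm{min}}$ is a single vertex, and the separation bookkeeping showing each lift of $X_1$ meets only one lift of $f$ --- are genuine but you resolve them correctly, so I see no gap.
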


\begin{proof}
First suppose that the edge $e_f$ whose orbit corresponds to the lifts of $f$ is not in the minimal subtree $T^{\mathrm{min}}$. Setting $G = T_F / \pi_1X$ and $G^{\mathrm{min}} = T_F^{\mathrm{min}} / \pi_1X$, the image of $e_f$ in $G$ does not lie in $G^{\mathrm{min}}$. Since $G^{\mathrm{min}}$ carries the fundamental group of $X$, the image of $e_f$ in $G$ must separate and the component of its complement not containing $G^{\mathrm{min}}$ has all trivial vertex groups. In $X$, this implies that the component $f \subset F$ separates $X$ and to one side bounds a component whose fundamental group when included into $\pi_1X$ is trivial. Hence, $f$ is superfluous.

Now suppose that $f$ is a component of $F$ that is superfluous. Then $f$ corresponds to a separating edge $e$ in $G = T_F / \pi_1 X$, with lift $e_f \subset T_F$, whose complement in $G$ contains a component with trivial induced subgroup. Hence, this component of $G \setminus e$ is a tree with trivial vertex groups. Set $G'$ equal to the other component of the complement of $e$ in $G$. Then $G'$ carries all of $\pi_1X$ and so its complete preimage in $T_F$ is connected, $\pi_1X$ invariant, and does not contain the edge $e_f$. Hence, $e_f$ is not in $T^{\mathrm{min}}$.
\end{proof}

We will use the above proposition in the following manner: If $f \subset F$ splits $X$ then $T_f$ is a $1$-edge collapse of $T_F$ corresponding to a $1$-edge splitting of $\pi_1X$.

\subsection{Topological projections}
The purpose of this section is to give a topological description of the projections $\pi_A(T)$ in terms of submanifolds of the manifold $M_n$. This will allow us to prove a version of Behrstock's inequality in the next section. As discussed below, these are similar to the submanifold projections of \cite{SS} and this section serves to explain the connection between these projections and the ones of \cite{BFproj}. To verify that our description is accurate, we rely on Hatcher's normal position for spheres in $M = M_n$ and its generalization in \cite{HOD}.  Let $\tilde{M}$ denote the universal covers of $M$. We say that essential sphere systems $S_1$ and $S_2$ in $M$ are in \emph{normal position} if for $\tilde{S_1}$ and $\tilde{S}_2$, the complete preimage of $S_1$ and $S_2$ in $\tilde{M}$, any spheres $s_1\in \tilde{S}_1$ and $s_2 \in \tilde{S}_2$ satisfy each of the following:
\begin{enumerate}
\item $s_1$ and $s_2$ intersect in at most one component and
\item no component of $s_1 \setminus s_2$ is a disk that is isotopic relative its boundary to a disk in $s_2$.
\end{enumerate}
This definition is easily seen to be equivalent to Hatcher's original notion of normal position in the case where one of the sphere systems is maximal \cite{Hatcher}. In particular, the authors in \cite{HOD}  use Hatcher's original proof of existence and uniqueness of normal position to show the following:

 \begin{lemma} \label{normalp}
Any two essential sphere systems $S_0$ and $S$ can be isotoped to be in normal position. Also, normal position is unique in the following sense:
Let $S_0$ be a sphere system of $M_n$, and let $S,S'$ be two isotopic spheres in $M_n$ which are in normal position with respect to $S_0$. Then there is a homotopy between $S$ and $S'$ which restricts to an isotopy on $S_0$.
 \end{lemma}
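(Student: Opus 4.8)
The statement to prove is Lemma \ref{normalp}, which asserts existence and uniqueness of normal position for essential sphere systems in $M_n$. This is attributed to \cite{HOD}, building on Hatcher's original work \cite{Hatcher}, so the proof should be a reduction to those established results rather than a from-scratch argument.

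\medskip

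The plan is to deduce both statements from Hatcher's theory of normal position with respect to a \emph{maximal} sphere system. First I would recall Hatcher's setup: fix a maximal sphere system $\Sigma$ in $M_n$ (one whose complementary pieces are all thrice-punctured $3$-spheres, i.e. a ``pants-like'' decomposition of $M_n$); Hatcher proved that any essential sphere system can be isotoped into normal form with respect to $\Sigma$, and that this normal form is unique up to isotopy preserving $\Sigma$. Given the two sphere systems $S_0$ and $S$ in our lemma, I would enlarge $S_0$ to a maximal sphere system $\Sigma \supseteq S_0$ (extend a maximal collection of disjoint non-isotopic spheres containing the spheres of $S_0$). Then put $S$ into Hatcher normal form with respect to $\Sigma$. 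The content to check is that being in normal form with respect to the larger system $\Sigma$ implies being in normal position (in the sense of the two bulleted conditions in the excerpt) with respect to the sub-system $S_0$; this is essentially because the lifted picture of $S$ in $\tilde M$ meets each lifted sphere of $\Sigma$ — in particular each lifted sphere of $S_0$ — in at most one circle, with no ``half-disk'' bigons, exactly the two conditions demanded. The symmetric roles of $S_0$ and $S$ also need attention, but the definition of normal position quantifies over all spheres in the preimages of both systems, so one applies the argument after recording that Hatcher normal form is symmetric enough for this.

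\medskip

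For the uniqueness clause, I would take two isotopic spheres $S, S'$ each in normal position with respect to $S_0$, again extend $S_0$ to a maximal system $\Sigma$, and invoke Hatcher's uniqueness of normal form: since $S$ and $S'$ are isotopic and both can be assumed (after the translation above) to be in normal form with respect to $\Sigma$, there is a homotopy from $S$ to $S'$ restricting to an isotopy on $\Sigma$, hence in particular restricting to an isotopy on $S_0 \subseteq \Sigma$. The one subtlety is that a sphere in normal position with respect to $S_0$ need not automatically be in Hatcher normal form with respect to the chosen extension $\Sigma$ — one may first have to further isotope it rel $S_0$. So the actual argument from \cite{HOD} likely proceeds by a direct innermost-circle/innermost-disk surgery argument on $S \cup S'$ relative to $S_0$, using $\pi_2$-triviality of the complementary pieces (Laudenbach's theorem \cite{Laud}, already cited in the excerpt) to upgrade homotopies of spheres to isotopies, rather than routing everything through a maximal system. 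I would present the reduction to Hatcher as the conceptual skeleton and then note that the surgery details are carried out in \cite{HOD}.

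\medskip

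The main obstacle I expect is precisely the compatibility between ``normal position with respect to $S_0$'' and ``normal form with respect to a maximal extension $\Sigma$'': the two conditions in the excerpt are weaker/more symmetric than Hatcher's normal form, so one cannot simply quote Hatcher verbatim. Handling this requires the double-of-a-handlebody structure of $M_n$ and the list-of-moves analysis in \cite{HOD} showing that any essential sphere can be isotoped to meet a given (not necessarily maximal) sphere system with each pair of lifts meeting in $\le 1$ circle and no isotopic half-disks, and that two such positions of isotopic spheres differ by a homotopy isotopic on the system. Since this is exactly the statement of the lemma and it is proved in \cite{HOD}, the proof in the paper should be short: extend $S_0$ to a maximal system (or not), quote \cite[the relevant lemma]{HOD} together with \cite{Hatcher} and \cite{Laud}, and verify the easy implication that normal form with respect to the extension restricts to normal position with respect to $S_0$.
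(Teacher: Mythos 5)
Your proposal is consistent with what the paper actually does: the lemma is stated without proof and is simply imported from \cite{HOD}, with the surrounding text only remarking that the definition of normal position reduces to Hatcher's original one when one system is maximal and that \cite{HOD} adapts Hatcher's existence/uniqueness argument. Your additional sketch of how one would reduce to the maximal case, and your correct hedge that \cite{HOD} in fact runs a direct surgery argument rather than routing through a maximal extension, goes beyond what the paper records but does not conflict with it.
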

 
Fix sphere systems $S$ and $S_A$ and a preferred component $C_A \subset M \setminus S_A$. In what follows we assume that $S_A = \partial C_A$. When this is the case, we refer to $C_A$ as the \emph{splitting component} and observe that such a component is homeomorphic to $M_{k,s}$. Let $A$ be the free factor defined up to conjugacy by $\pi_{1} (C_A)$ and let $T = T_S$ be the free splitting of $\mathbb{F}$ determined by the sphere system $S$. Since we are interested the projection of the splitting $\mathbb{F} \curvearrowright T$ to the free splitting complex of $A$, our aim is a topological interpretation of the projection $\pi_A (T) = [A \curvearrowright T^A]$.

Put $S$ and $S_A$ in normal position and consider the system of surfaces $F = S \cap C_A$. This family of surfaces is well-defined up to homotopy in $C_A$ that restricts to isotopy on $S_A$ by Lemma \ref{normalp}. Consider the graph of spaces decomposition of $C_A$ given by $F$ with dual tree $T_F$, see Section \ref{surfacesandsplittings}. Recall that a connected component $f \subset F$ is \emph{superfluous} if $f$ separates $C_A$ and to one side bounds a relatively simply connected submanifold, that is $C_A \setminus f = C_1 \sqcup C_2$ and $\text{im}(\pi_1(C_1) \to \pi_1(C_A)) = 1$. A component of $F$ that is not superfluous is said to \emph{split} $C_A$. Set $\bar{F}$ equal to $F$ minus its superfluous components and $T_{\bar{F}}$ its dual tree (that is the tree dual to the complete preimage of $\bar{F}$ in the universal cover of $C_A$). 

We claim the following about the associated splitting of $\pi_1C_A =A$: 
\begin{enumerate}
\item there is an $A$-equivariant simplicial embedding $\chi: T_F \to T$ whose image contains $T^A$,
\item an edge $e$ of $T_{F}$ maps to an edge in $T^A$ if and only if $e$ corresponds to the lift of a component of $f \subset F$ that splits $C_A$, and 
\item the projection $\pi_A(T) = A \curvearrowright T^A$ is conjugate to the $A$-tree $T_{\bar{F}}$.
\end{enumerate}

\begin{figure}
\begin{center}
\begin{large}
$\begin{tikzcd}
T_{F} \arrow[bend left]{rrd}{\chi}  \\
\tilde{C}_{A} \arrow{u} \arrow[hookrightarrow]{r} \arrow{d}{\pi|_{\tilde{C}_{A}}} & \tilde{M} \arrow{r}{\tilde{p}}\arrow{d}{\pi} &T\arrow{d} \arrow[hookleftarrow]{r}  & T^{A} \arrow{d}       \\
C_{A} \arrow[hookrightarrow]{r}    &  M \arrow{r}        &T / \F_n & T^{A}/A \arrow{l}
\end{tikzcd}$
\end{large}
\caption{Defining the map $\chi: T_F \to T$}
\label{TF}
\end{center}
\end{figure}

To prove the above claim we refer to Figure \ref{TF}, where as above the free splitting $\F\curvearrowright T$ corresponds to the sphere system $S \subset M$. Let $\pi: \tilde{M} \to M$ be the universal cover of $M$ and let $\tilde{S}$ be the complete preimage of $S$ in $\tilde{M}$. The map labeled $\tilde{p}$ is the equivariant map from $\tilde{M}$ to the tree $T$ obtained by retracting $\tilde{M}$ to the tree dual to $\tilde{S} \subset \tilde{M}$, as explained in Section \ref{surfacesandsplittings}. Hence, if we let $m$ denote the set of midpoints of edges of $T$ then $\tilde{S} = \tilde{p}^{-1}(m)$. Setting $F = S \cap C_A$ as above, we note that if $\tilde{C}_A$ is a fixed component of the preimage of $C_A$ in $\tilde{M}$ then $\pi|_{\tilde{C}_A} : \tilde{C}_A \to C_A$ is the universal cover and $\tilde{F} = (\pi|_{\tilde{C}_A})^{-1}(F) = \tilde{S} \cap \tilde{C}_A$. Hence, by definition of the dual tree to $F$ in $C_A$,  $T_F$ is precisely the tree dual to $\tilde{S} \cap \tilde{C}_A$ in $\tilde{C}_A$.

Because $\tilde{p}$ is $\mathbb{F}$-equivariant, $T' = \tilde{p}(\tilde{C}_A)$ is an $A$-invariant subtree of $T$ and so contains $T^A$, the minimal $A$-subtree of $T$. Note that by carefully choosing the projection $\tilde{p}$, we may assume that $T'$ is a subcomplex of $T$. We first show that the $A$-tree $T'$ is conjugate to the $A$-tree $T_F$. Since $T$ is dual to $\tilde{S}$ in $\tilde{M}$ and $T_F$ is dual to $\tilde{F} = \tilde{S} \cap \tilde{C}_A$ in $\tilde{C}_A \subset \tilde{M}$ each complementary component of $\tilde{F}$ in $\tilde{C}_A$ corresponds to a complementary component of $\tilde{S}$ in $\tilde{M}$. This induces a map from the vertices of $T_F$ to those of $T$ and as components of $\tilde{F}$ are contained in components of $\tilde{S}$, this map extends to a simplicial map of $A$-trees $\chi: T_F \to T$ mapping edges to edges with image $T'$. We show that this map does not fold edges and is, therefore, an immersion. This suffices to prove that $\chi: T_F \to T'$ is an $A$-conjugacy.

\begin{figure}[htbp]
\begin{center}
\includegraphics[height=50mm]{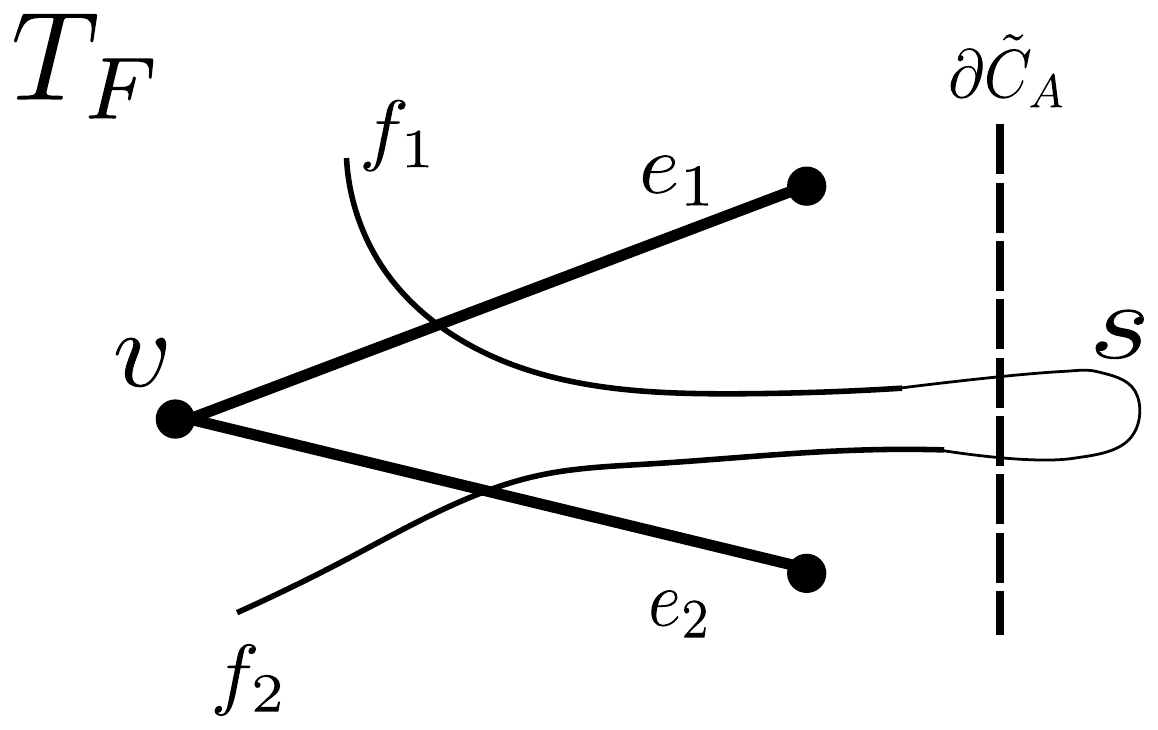}
\caption{Folding edges}
\label{fig2}
\end{center}
\end{figure}

To see that $\chi$ does not fold edge, suppose to the contrary that two edges $e_1$ and $e_2$ with common initial vertex $v$ are identified by $\chi$ (Figure \ref{fig2}). Then the edge $e_i$ is dual to a component $f_i\subset \tilde{F}$ in $\tilde{C}_A$ and these components are disjoint. Since $e_1$ and $e_2$ are folded by $\chi$, their common image $e$ in $T$ corresponds to a sphere $s \subset \tilde{S}$ which must contain $f_1$ and $f_2$ as subsurfaces. Let $\tau$ be an arc in $s$ that connects the interiors of $f_1$ and $f_2$ and intersects only the components of $s \cap \partial \tilde{C}_A$ that separate $f_1$ and $f_2$. Since each component of $\partial \tilde{C}_A$ separates $\tilde{M}$, as do all essential spheres in $\tilde{M}$, the first and last components of $\partial \tilde{C}_A$ intersected by $\tau$ must be the same. This implies that $f_1$ and $f_2$ each have boundary components on the same component of $\partial \tilde{C}_A$. Hence, the sphere $s$ intersects the same components of $\partial \tilde{C}_A$ in at least $2$ circles. This, however, contradicts normal position of the sphere systems $S$ and $\partial C_A$. We conclude that the $A$-tree $T_F$ and $T'$ are simplicially conjugate. This proves claim $(1)$ and justifies identifying $T_F$ and $T'$ through $\chi$. Observe that since $T'$ contains $T^A$, we get the induces $A$-conjugacy $\chi: T^A_F \to T^A$ on minimal subtrees.

It remains to show that $T_{\bar{F}} = T^{A}_F$, as this identifies edges of $T$ corresponding to components of $F$ that split $C_A$ with those contained in $T^A$. Since the components of $\bar{F}$ are precisely those that split $C_A$, Proposition \ref{minimal} implies that the minimal $A$-subtree of $T_F$ is $T_{\bar{F}}$ and so $T_{\bar{F}} = T^{A}_F$, as required. This completes the proofs of claims $(2)$ and $(3)$.

To summarize the above discussion:
\begin{proposition} \label{top}
Let $T \in \mathcal{S}_n'$ be a free splitting of $\mathbb{F}_n$ corresponding to the sphere system $S \subset M_n$. Fix a submanifold $C_A$, as above, with $\pi_1(C_A) = A$ and $\partial C_A$ and $S$  in normal position. Then, if one removes from $F = S \cap C_A$ the components that separate and bound relatively simply connected components, the resulting splitting $T_{\bar{F}}$ is conjugate as an $A$-tree to $\pi_{\mathcal{S}(A)}(T)$.
\end{proposition}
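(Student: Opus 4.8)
The plan is to realize the projection $\pi_{\mathcal{S}(A)}(T)$ concretely by lifting everything to the universal cover $\tilde{M}$ of $M = M_n$ and comparing two dual trees. Fix an $\F$-equivariant retraction $\tilde{p} : \tilde{M} \to T$ onto the tree dual to the complete preimage $\tilde{S}$ of the sphere system $S$, normalized so that $\tilde{S} = \tilde{p}^{-1}(m)$ for $m$ the set of edge-midpoints of $T$. Choose one component $\tilde{C}_A$ of the preimage of the splitting component $C_A$; then $\pi|_{\tilde{C}_A} : \tilde{C}_A \to C_A$ is the universal cover with deck group a chosen representative of $A = \pi_1 C_A$, and $\tilde{F} := \tilde{S} \cap \tilde{C}_A$ is the full preimage of $F = S \cap C_A$, so that $T_F$ is exactly the tree dual to $\tilde{F}$ in $\tilde{C}_A$. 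Equivariance of $\tilde{p}$ makes $T' := \tilde{p}(\tilde{C}_A)$ an $A$-invariant subtree of $T$, hence one containing the minimal subtree $T^A$; after a harmless adjustment of $\tilde{p}$ we may take $T'$ to be a subcomplex of $T$.

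Next I would construct a simplicial $A$-map $\chi : T_F \to T$ with image $T'$: each complementary region of $\tilde{F}$ in $\tilde{C}_A$ is contained in a unique complementary region of $\tilde{S}$ in $\tilde{M}$, and each piece $f$ of $\tilde{F}$ lies in a unique sphere of $\tilde{S}$, which determines $\chi$ on vertices and edges. The \textbf{heart of the argument} --- and the step I expect to be the real obstacle --- is to show $\chi$ folds no two edges, for then $\chi : T_F \to T'$ is an isomorphism of $A$-trees. Suppose edges $e_1, e_2$ sharing a vertex were identified by $\chi$; they are dual to disjoint planar pieces $f_1, f_2 \subset \tilde{F}$ lying in a common sphere $s \subset \tilde{S}$. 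Join them by an arc $\tau \subset s$ meeting only those circles of $s \cap \partial \tilde{C}_A$ that separate $f_1$ from $f_2$; since every component of $\partial \tilde{C}_A$ and every essential sphere separates the simply connected $\tilde{M}$, the first and last such circles lie on the same component of $\partial \tilde{C}_A$, so $s$ meets that component in at least two circles. This contradicts the assumption that $S$ and $\partial C_A$ are in normal position, so this is precisely where Lemma \ref{normalp} (Hatcher's normal position and its uniqueness) enters, and it is essentially the only genuinely geometric input.

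Restricting $\chi$ to minimal subtrees then gives an $A$-conjugacy $T^A_F \to T^A$, so it remains to identify $T^A_F$ with $T_{\bar{F}}$. For this I would apply Proposition \ref{minimal} inside $C_A$: each component $f \subset F$ is a subsurface of a sphere, so $\pi_1 f \to \pi_1 M$ is trivial, and since $A = \pi_1 C_A$ injects into $\pi_1 M$ this forces $\mathrm{im}(\pi_1 f \to \pi_1 C_A) = 1$; thus $A \curvearrowright T_F$ has trivial edge stabilizers and Proposition \ref{minimal} applies, giving that an edge of $T_F$ lies in $T^A_F$ exactly when the corresponding $f$ splits $C_A$, i.e. is not superfluous. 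Hence $T^A_F = T_{\bar{F}}$, and combining the two identifications yields $T_{\bar{F}} = T^A_F \cong T^A = \pi_{\mathcal{S}(A)}(T)$ as $A$-trees, which is the assertion. The only remaining points --- that $F$ is well defined up to homotopy in $C_A$ restricting to isotopy on $\partial C_A$, so that $T_{\bar{F}}$ does not depend on the choices made --- follow from the uniqueness clause of Lemma \ref{normalp} and are routine.
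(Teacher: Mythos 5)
Your proposal is correct and follows essentially the same route as the paper: the same dual-tree setup in $\tilde{M}$, the same map $\chi: T_F \to T'$ with the no-folding argument via normal position of $S$ and $\partial C_A$, and the same appeal to Proposition \ref{minimal} to identify $T^A_F$ with $T_{\bar{F}}$. Your explicit check that the edge stabilizers of $A \curvearrowright T_F$ are trivial (so that Proposition \ref{minimal} applies) is a small detail the paper leaves implicit, but otherwise the arguments coincide.
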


\subsection{Relations between the various projections}\label{relations}
In \cite{SS}, Sabalka and Savchuk define projections from the sphere complex $\S(M_n)$ to the sphere and disk complex of certain submanifolds of $M_n$. Their projections can be interpreted within the framework developed in this section, providing a simple relationship with $\pi_{\S(A)}(T)$. This answers a question asked in \cite{SS, BFproj}. However, it is important to note that as demonstrated below it is possible for each of the projections to be defined in situations when the other is not. Also, it is not clear whether the distances in the target complexes of the two projections are comparable. This section is not necessary for the rest of the paper.

Let $X$ denote a connected submanifold of $M_n$ that is a component of the complement of some sphere system; we above referred to such submanifolds as splitting components. Note that $X$ is homeomorphic to $M_{k,s}$ for some $k<n$ and $s>0$. The disk and sphere complex of $X$, denoted $\mathcal{DS}(X)$, is defined to be the simplicial complex whose vertices are isotopy classes of essential spheres and essential properly embedded disks in $X$ with $k+1$ vertices spanning a $k$-simplex when disks and spheres representing these vertices can be realized disjointly in $X$. Sabalka and Savchuk define their projections as follows: Let $S$ be an essential sphere system in $M_n$. Put $S$ and $\partial X$ in normal position and set $F =S \cap X$. The projection $\pi_X^{\mathrm{SS}}([S]) \subset \mathcal{DS}(X)$ is then defined to be the components of $F$ which are either spheres or disks. If there are no such components of $F$ then the projection is left undefined.

Fix a submanifold $X$, with $\pi_1X \neq  1$, so that $A = \pi_1X$ is a free factor of $\pi_1M_n =\F_n$. There is a \emph{partially} defined map $\Phi: \mathcal{DS}^0(X) \to \S^0(A)$ defined by taking $D \in \mathcal{DS}(X)$ and mapping it to the $A$-tree  $T_D$ if $D$ splits $X$ and leaving $\Phi(D)$ undefined otherwise. Recall that as in Section \ref{surfacesandsplittings}, $T_D$ is the dual tree to $D\subset X$. Note that this map will be defined on all vertices of $\mathcal{DS}(X)$ \emph{only} when $X$ is homeomorphic to  $M_{k,1}$. When $D$ and $D'$ are adjacent in $\mathcal{DS}(X)$ and both $\Phi(D)$ and $\Phi(D')$ are defined, then it is clear that $d_{\S(A)}(\Phi(D),\Phi(D')) \le 1$.  With this setup, we can show the following:

\begin{proposition}\label{relation}
Let $T$ be a free splitting of $\F_n$ and $S$ its corresponding sphere system in $M_n$. Let $X$ be a submanifold of $M_n$ with $\pi_1 X = A \neq 1$. If the composition $\Phi \circ \pi_X^{\mathrm{SS}}(S)$ is defined, then it is a free splitting of $A$ that has $\pi_{\S(A)}(T)$ as a refinement.
\end{proposition}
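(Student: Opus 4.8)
The plan is to compare the two sphere/surface decompositions of $X$ directly, using the fact that both are obtained from the \emph{same} system $F = S \cap X$ put in normal position with $\partial X$. Recall that $\pi_X^{\mathrm{SS}}(S)$ consists of those components of $F$ that are spheres or disks, while Proposition \ref{top} identifies $\pi_{\S(A)}(T)$ with the dual tree $T_{\bar F}$, where $\bar F \subset F$ is the sub-system obtained by deleting the superfluous components (those that separate $X$ and bound a relatively simply connected piece). The composition $\Phi \circ \pi_X^{\mathrm{SS}}(S)$, when defined, is the common refinement of the one-edge splittings $T_D$ as $D$ ranges over the \emph{splitting} components of $F$ that happen to be disks or spheres. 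So in tree language the claim becomes: the $A$-tree dual to the sub-system $D(F) \subset F$ of disk-or-sphere components that split $X$ refines the $A$-tree dual to $\bar F$.

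First I would record, via Bass--Serre theory together with Proposition \ref{minimal}, that for any sub-system $F' \subseteq F$ whose components all split $X$, the dual tree $T_{F'}$ is a collapse (equivalently, the target of a refinement) of $T_{F''}$ whenever $F' \subseteq F''$ and every component of $F''$ splits $X$: collapsing the edges dual to the extra components $F'' \setminus F'$ realizes the collapse map, and since those components split $X$ the collapsed tree is still minimal, hence equals $T_{F'}$. Thus it suffices to show $\bar F \subseteq D(F)$, i.e. \emph{every component of $\bar F$ is a disk or a sphere}. This is where the topology enters: a component $f$ of $F = S\cap X$ is a piece of a sphere of $S$ cut along $\partial X$, so $f$ is a planar surface, and its boundary circles lie on $\partial X$, which consists of essential spheres bounding $X$. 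The key point is that if $f$ had more than one boundary circle then, because each component of $\partial X$ separates $M_n$ (all essential spheres in $\widetilde{M}_n$ separate), one can run the same ``first and last boundary circle on the same sphere'' argument used in the proof of Proposition \ref{top} to produce a disk component of $f \setminus \partial X$ isotopic rel boundary into $\partial X$, contradicting normal position --- or else to conclude $f$ is superfluous. So a non-superfluous component of $F$ meets $\partial X$ in at most one circle: if zero circles it is a sphere, if one circle it is a disk. Hence $\bar F \subseteq D(F)$, and we are done.

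The main obstacle I expect is the last step: carefully arguing that a non-superfluous component $f$ of $F$ has at most one boundary circle. One has to handle the planar surface $f$ together with the spheres of $\partial X$ it meets, and rule out configurations with two or more boundary circles unless deleting an innermost disk (via normal position, condition (2)) or recognizing a relatively simply connected complementary piece (superfluousness) applies. The cleanest route is probably to lift to the universal cover $\widetilde{X} \subset \widetilde{M}_n$, where $\partial \widetilde{X}$ is a disjoint union of spheres each separating $\widetilde{M}_n$, and re-use verbatim the separation/innermost-disk argument already given for the non-folding of $\chi$ in the proof of Proposition \ref{top}; the normal position hypothesis of Lemma \ref{normalp} is exactly what is needed to discard the bad disks. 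Once that local statement is in hand, the global conclusion $\bar F \subseteq D(F)$ and hence the refinement statement follow formally from the collapse-map bookkeeping of the first step. I would also remark that $\Phi \circ \pi_X^{\mathrm{SS}}(S)$ being defined is not automatic (it requires $D(F)\neq\emptyset$), which is why the statement is conditional; when $X \cong M_{k,1}$ every component of $F$ is a disk or sphere so the hypothesis is vacuous and the conclusion says the two projections literally coincide up to this refinement.
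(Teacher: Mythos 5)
There is a genuine gap, and it starts with the direction of the refinement. The proposition asserts that $\Phi\circ\pi_X^{\mathrm{SS}}(S)$ \emph{has $\pi_{\S(A)}(T)$ as a refinement}, i.e.\ that $T_{\bar F}$ refines $T_{D}$, where $D\subset F$ is the collection of disk-or-sphere components that split $X$. By your own (correct) collapse-map bookkeeping, this requires the inclusion $D\subseteq\bar F$ --- which is immediate, since a splitting disk or sphere is in particular a splitting component, and collapsing the edges of $T_{\bar F}$ dual to $\bar F\setminus D$ gives the collapse map $T_{\bar F}\to T_D$. That one line, together with Proposition \ref{top} identifying $T_{\bar F}$ with $\pi_{\S(A)}(T)$, is the whole proof. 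You instead reduce to the reverse inclusion $\bar F\subseteq D$, i.e.\ to the claim that \emph{every} non-superfluous component of $F=S\cap X$ is a disk or a sphere.

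That claim is false, and the paper's own Example~2 in Section \ref{relations} is a counterexample: there the sphere $R$ splits $Y$ (so $Y\cap R$ contains a non-superfluous component, and $\pi_{\S(A)}(T_R)$ is nontrivial), yet $Y\cap R$ has no disk or sphere components at all --- the splitting components are planar surfaces with two or more boundary circles on $\partial Y$. The same phenomenon is used essentially in the proof of Proposition \ref{B}, where components of $S\cap C_B$ with several boundary circles on $\partial C_B$ are exactly the pieces that split $C_B$. So the separation/innermost-disk argument you propose cannot show that a non-superfluous component meets $\partial X$ in at most one circle; normal position only forbids disk components of $s\setminus\partial X$ that are isotopic rel boundary into $\partial X$, not multiple essential boundary circles. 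Had your inclusion been true, combined with the trivial one it would force $\bar F=D$ and hence equality of the two projections, which is much stronger than (and contradicted by) what the paper establishes. The fix is simply to prove the statement as written: the trivial inclusion $D\subseteq\bar F$ in the direction you did not consider.
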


\begin{proof}
By Proposition \ref{top}, if $S$ and $\partial X$ are in normal position and $F  = S \cap X$ then $T^A$ is conjugate to $T_{\bar{F}}$ where $\bar{F}$ is the union of connected components of $F$ that split $X$. By definition, $\Phi \circ \pi_X^{\mathrm{SS}}(S)$ is the tree dual to the collection of disks $D \subset F$ that split $X$, which is nonempty by assumption. Since $D \subset \bar{F}$, the induced map $T_{\bar{F}} \to T_D$ is a collapse map. Hence, $T_{\bar{F}}$ refines $\Phi \circ \pi_X^{\mathrm{SS}}(S)$.
\end{proof}

This proposition also gives the connection between the projections of \cite{SS} and those of \cite{BFproj}. Recall that the projection $\pi_{\S(A)}^{\mathrm{BF}}(B)$ is well-defined, i.e. has bounded diameter image, when either $(1)$ $A$ and $B$ have the same color in the finite coloring of the vertices of $\FF_n$ or $(2)$ $d_{\FF}(A,B) > 4$. See \cite{BFproj} for definition of the coloring and further details.

\begin{corollary}
Let $A,B$ be free factors of $\F_n$ satisfying one of the above conditions so that the projection $\pi_{\S(A)}^{\mathrm{BF}}(B)$ is well-defined. Let $X$ be a submanifold of $M$ with $\pi_1X = A$ and let $S$ be any sphere system that contains a sphere system $S' \subset S$ whose dual tree $T_{S'}$ has $B$ as a vertex stabilizer. If the composition $\Phi \circ \pi_X^{\mathrm{SS}}(S)$ is defined, then it has bounded distance from $\pi_{\S(A)}^{\mathrm{BF}}(B)$ in $\S(A)$, where the bound depends only on $n$.
\end{corollary}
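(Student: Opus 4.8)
The plan is to relate $\Phi\circ\pi_X^{\mathrm{SS}}(S)$ to $\pi_{\S(A)}^{\mathrm{BF}}(B)$ through the intermediate trees $\pi_{\S(A)}(T_S)$ and $\pi_{\S(A)}(T_{\hat S})$, where $\hat S\supseteq S$ is a suitable proper refinement of $S$: Proposition \ref{relation} controls the first comparison, Lemma \ref{refine} the second, and the coarse well-definedness of the Bestvina--Feighn projection the third.

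First I would extend the sphere system $S$ to a sphere system $\hat S\supseteq S$ all of whose complementary components are simply connected (keep cutting along essential spheres), so that the dual tree $T_{\hat S}$ is a proper free splitting, i.e.\ $T_{\hat S}\in\K_n^0$. Since $S'\subseteq S\subseteq\hat S$, collapsing the spheres of $\hat S\setminus S'$ yields a collapse map $c\colon T_{\hat S}\to T_{S'}$. Let $w$ be the vertex of $T_{S'}$ with $\mathrm{stab}(w)=B$ provided by the hypothesis. Then $Y:=c^{-1}(w)$ is a $B$-invariant subtree of $T_{\hat S}$ (point preimages of a collapse map are connected), the action $B\curvearrowright Y$ has trivial point stabilizers (as $T_{\hat S}$ is proper), and $Y/B\to T_{\hat S}/\F_n$ is an embedding onto a subgraph with fundamental group $B$ (if $gy=y'$ with $y,y'\in Y$ then $gw=w$, so $g\in B$). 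Hence $T_{\hat S}$ is a legitimate choice in the definition of $\pi_{\S(A)}^{\mathrm{BF}}(B)$, and since $A$ and $B$ satisfy one of the stated conditions, \cite{BFproj} gives $d_{\S(A)}(\pi_{\S(A)}(T_{\hat S}),\pi_{\S(A)}^{\mathrm{BF}}(B))\le M$ with $M=M(n)$.

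Next, $T_{\hat S}$ refines $T_S$ (collapse the spheres of $\hat S\setminus S$). The hypothesis that $\Phi\circ\pi_X^{\mathrm{SS}}(S)$ is defined forces, via Propositions \ref{top} and \ref{relation}, that $\pi_{\S(A)}(T_S)=[A\curvearrowright T_S^A]$ is a nontrivial $A$-tree; so Lemma \ref{refine} with $H=A$ applies and shows $\pi_{\S(A)}(T_{\hat S})$ refines $\pi_{\S(A)}(T_S)$. A refinement relation between two free splittings of $A$ makes them equal or adjacent in $\S'(A)$, hence at distance bounded by a universal constant in $\S(A)$. By Proposition \ref{relation}, $\pi_{\S(A)}(T_S)$ also refines $\Phi\circ\pi_X^{\mathrm{SS}}(S)$, so these too are at universally bounded distance. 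Three triangle inequalities then bound $d_{\S(A)}(\Phi\circ\pi_X^{\mathrm{SS}}(S),\pi_{\S(A)}^{\mathrm{BF}}(B))$ by a constant depending only on $n$.

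I expect the only real obstacle to be the first step. One cannot simply use $T_S$ itself: a vertex of (a collapse of) $T_S$ stabilized by $B$ typically carries a nontrivial vertex group, so the associated subgraph of $T_S/\F_n$ has fundamental group only a proper quotient of $B$, which is not enough to feed into the Bestvina--Feighn machinery. Passing to a proper refinement $\hat S$ is exactly what turns this graph of groups into an honest subgraph with $\pi_1\cong B$, and one must verify, via the collapse map as above, that the subgraph is embedded. Everything after that is bookkeeping with the Lipschitz and refinement estimates of Section 3 together with the topological model of Proposition \ref{top}.
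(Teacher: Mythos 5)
Your proof is correct and follows essentially the same route as the paper: both arguments chain the refinement relations of Lemma \ref{refine} and Proposition \ref{relation} together with the coarse well-definedness of the Bestvina--Feighn projection, bounding the total by the diameter of that projection plus a universal constant. The only difference is that the paper simply declares $\pi_{\S(A)}^{\mathrm{BF}}(B) = \pi_{\S(A)}(T_{S'})$ and compares with $\pi_{\S(A)}(T_S)$ via Lemma \ref{refine}, whereas you pass to a proper refinement $\hat S \supseteq S$ to manufacture a literal input $T_{\hat S} \in \K_n^0$ for the Bestvina--Feighn definition; this is a more careful reading of that definition rather than a different argument.
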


It is important to note that whether $\Phi \circ \pi_X^{\mathrm{SS}}(S)$ is defined is highly dependent on the choice of $X$ and $S$. This is demonstrated in the examples below.

\begin{proof}
By definition, we may take $\pi_{\S(A)}^{\mathrm{BF}}(B) = \pi_{\S(A)}(T_{S'})$. By Lemma \ref{refine}, this is refined by the projection $\pi_{\S(A)}(T_S)$,  and by Proposition \ref{relation}, $\pi_{\S(A)}(T_S)$ also refines $\Phi \circ \pi_X^{\mathrm{SS}}(S)$. This completes the proof since we may take as our bound the diameter of the Bestvina-Feighn projections plus $2$.
\end{proof}

We end this section with some examples that illustrate cases when one of the projections is defined and the other is not. The general idea is that while the Bestvina-Feighn projections are robust, i.e. they do not depend on how a factor is complemented, the Sabalka and Savchuk projections are highly sensitive to the submanifold that is chosen to represent a free factor.

\begin{example}
Take $M = M_4$ and $S = S_1 \cup S_2$ to be a union of two essential spheres so that $X = M \setminus S$ connected with $\pi_1X = A$. Let $f \in \Out(\F_n)$ with $f(A) = A$ but $f$ has no power that fixes $S$ in $\S(M)$. Then $\pi_{\S(A)}(f^nT_S) = \pi_{\S(A)}(T_S)$ is undefined, as $A$ fixes a vertex of $T_S$, but $\pi_X^{\mathrm{SS}}(f^nS)$ is defined for all $n\ge1$ by construction. Hence, it must be the case that each disk of  $\pi_X^{\mathrm{SS}}(f^nS)$ does not split $X$. Informally, each disk of  $\pi_X^{\mathrm{SS}}(f^nS)$ ($n \ge 1$) simply encloses some boundary components of $X$ without splitting $\pi_1X$.
\end{example}

\begin{example}
Take $M,X,A$ as above and refer to Figure \ref{SSexp} where $M$ is drawn as a handlebody and spheres are drawn as properly embedded disks; doubling the picture gives an illustration of what is described. Let $S_3$ be any sphere that separates $M$ into two components, one of which contains $S = \partial X$ and the other, denoted $Y$, has $\pi_1Y =A$. Let $R$ be the essential sphere shown in Figure \ref{SSexp} with dual tree $T_R$; R is in normal position with $S_3$. Note that $R$ splits $Y$ with non-trivial projection $\pi_{\S(A)}(T_R)$, but $Y \cap R$ has no disks of intersection and so $\pi_{Y}^{\mathrm{SS}}(R)$ is undefined. If, however, we use the submanifold $X$ to represent the free factor $A$, we see that $\pi_{X}^{\mathrm{SS}}(R)$ is the sphere $R \subset X$ and $\Phi \circ \pi_X^{\mathrm{SS}}(R) = \pi_{\S(A)}(T_R)$. 

Even if we only use the submanifold $X$, which \emph{exhausts} $M$ in the terminology of \cite{SS}, to represent the free factor $A$, the question of whether the composition $\Phi \circ \pi_X^{\mathrm{SS}}$ is defined still depends on the choice of sphere that is projected. This is because the existence of a disk in $\pi_X^{\mathrm{SS}}(R)$ that splits $X$ is highly depended on $R$ itself. In fact, it is not difficult to show the following: for any nonseperating sphere $R \subset X$ there is a $f \in \Out(\F_4)$ with $f(A) = A$ and $f|_A  = 1 \in \Out(A)$, so in particular $\pi_A(fT_R) = \pi_A(T_R) =\Phi \circ  \pi_X^{\mathrm{SS}}(R) $, but $\Phi \circ  \pi_X^{\mathrm{SS}}(fR)$ is undefined. This implies that all disks of $\pi_X^{\mathrm{SS}}(fR)$ are superfluous even though $\pi_A(fT_R) = \pi_A(T_R)$. \\
\end{example}

\begin{figure}[htbp]
\begin{center}
\includegraphics[height=50mm]{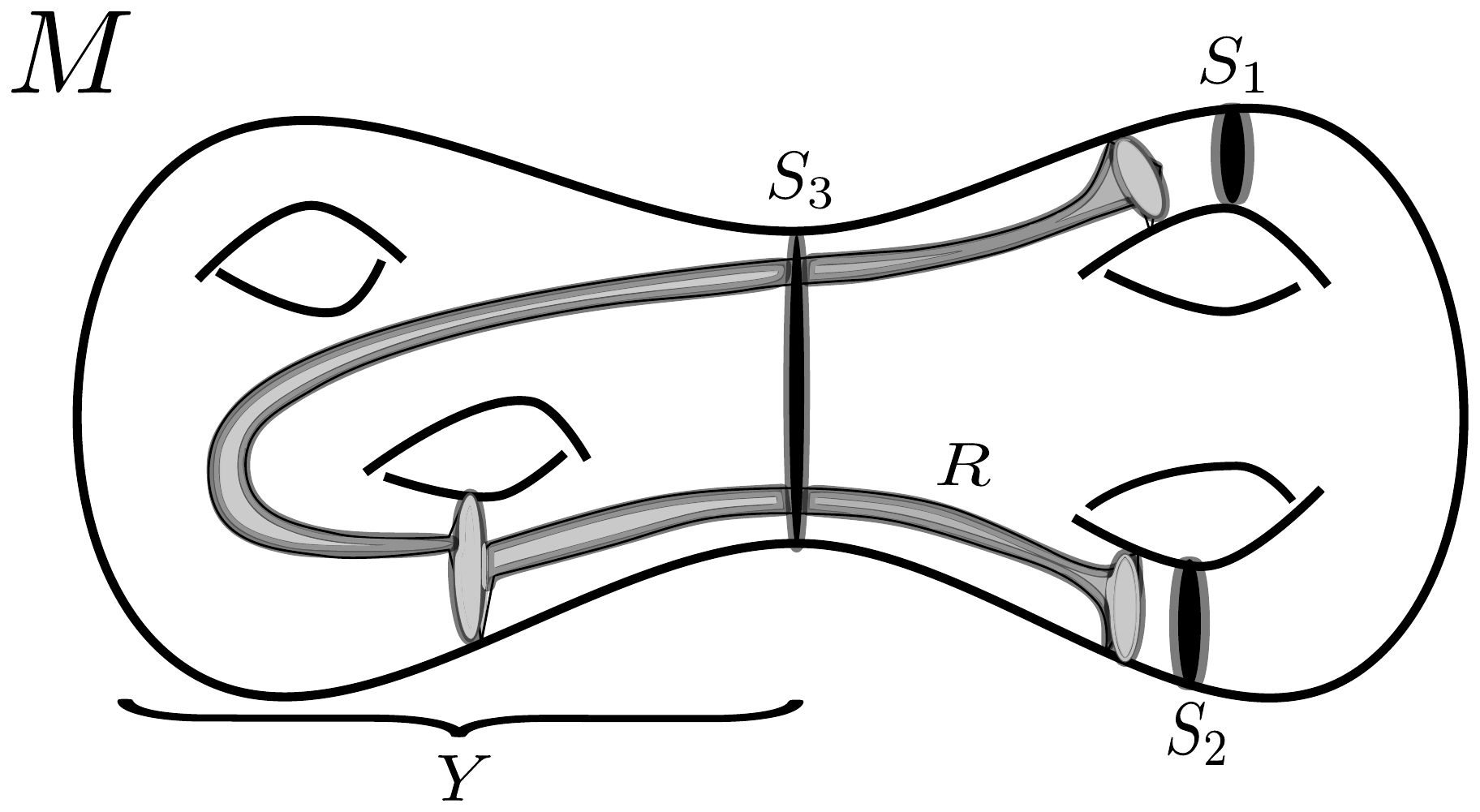}
\caption{Projecting $R$ to $Y$}
\label{SSexp}
\end{center}
\end{figure}

\section{Behrstock's Inequality}
We now introduce an analog of Behrstock's inequality for projections to the free factor complex of a free factor. For the original statement and proof in the case of subsurface projections from the curve complex of a surfaces see \cite{Be}. The proof of the free group version we give in Proposition \ref{B} is, however, more similar in spirit to the proof of the original version of Behrstock's inequality that is recorded in \cite{Ma} where it is attributed to Chris Leininger; both proofs investigate intersections of submanifolds and give explicit bounds on the distances of the projections considered.

\begin{proposition} \label{B}
There is an $M \ge 0$ so that if  $A$ and $B$ are free factors of $\mathbb{F}$ of rank $\ge 2$ that overlap, then for any $T \in \S'$ with $\pi_A(T) \neq \emptyset \neq \pi_B(T)$ we have
$$\mathrm{min}\{d_A(B,T), d_B(A,T) \} \le M .$$
\end{proposition}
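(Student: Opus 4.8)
The plan is to work in the topological model provided by Proposition~\ref{top}, realizing $T$ by a sphere system $S \subset M_n$ and the overlapping free factors $A$ and $B$ by splitting submanifolds $C_A, C_B \subset M_n$, and then to extract explicit diameter bounds from normal position of sphere systems. The key preliminary step is to arrange the geometry: since $A$ and $B$ overlap, we can choose the conjugacy-class representatives so that $H = \langle A, B\rangle \cong A *_x B$ is self-normalizing, and by Proposition~\ref{properties1}(3) it suffices to analyze $\pi_{\mathcal{S}(A)}$ and $\pi_{\mathcal{S}(B)}$ of the splitting $H \curvearrowright T^H$. So I would first reduce to the case $\F = H$, i.e.\ $\F = A *_x B$, and pick submanifolds $C_A, C_B$ with $\pi_1 C_A = A$, $\pi_1 C_B = B$, and $C_A \cap C_B$ a submanifold carrying $x$, all arranged so that $\partial C_A$, $\partial C_B$, and $S$ are pairwise (and triple-wise) in normal position, using Lemma~\ref{normalp}.

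The heart of the argument is the following dichotomy. Consider the surfaces $F_A = S \cap C_A$ and $F_B = S \cap C_B$. Passing to the universal cover and using that all the relevant spheres separate $\tilde M$, I expect to show: \emph{either} some component of $F_A$ is isotopic into $\partial C_B$ (equivalently, after cleaning up, $F_B = S \cap C_B$ has no component that splits $C_B$ — so $\pi_B(T)$ ``sees nothing new'' and is controlled by $\pi_B(A)$), \emph{or} symmetrically for $F_B$ into $\partial C_A$. The mechanism is exactly the folding/intersection analysis from the proof of Proposition~\ref{top}: if both $C_A$ and $C_B$ contained a piece of $S$ that genuinely split them and was not pushed off the other's boundary, one could build an arc in a single sphere $s \in \tilde S$ meeting $\partial \tilde C_A$ and $\partial \tilde C_B$ in a pattern that, since each boundary sphere separates $\tilde M$, forces $s$ to hit some component of $\partial \tilde C_A$ or $\partial \tilde C_B$ in two circles, contradicting normal position. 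So I would set up the combinatorics of circles of $s \cap (\partial \tilde C_A \cup \partial \tilde C_B)$ carefully and run a parity/separation argument to derive the contradiction, concluding the dichotomy.

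Once the dichotomy is in hand, the bound follows: say we are in the case that every component of $S \cap C_B$ fails to split $C_B$ (after discarding superfluous components). Then by Proposition~\ref{top}, $T^B$ is conjugate to $T_{\bar F_B}$ where $\bar F_B$ records only the components that split $C_B$; since the only such components come from $\partial C_A \cap C_B$, which is exactly the picture computing $\pi_B(A)$ (via Lemma~\ref{welldefined}, $\pi_B(A)$ lives on the core graph cut out by the $C_A$-boundary in a $B$-cover), we get that $\pi_B(T)$ and $\pi_B(A)$ lie within uniformly bounded distance in $\mathcal{F}(B)$, so $d_B(A,T) \le M$ for an $M$ depending only on $n$ — indeed one can take $M$ comparable to the diameter bound $4$ from Proposition~\ref{properties1}(1) plus a universal constant coming from normal position. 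The symmetric case gives $d_A(B,T) \le M$, so $\min\{d_A(B,T), d_B(A,T)\} \le M$ as claimed.

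The main obstacle I anticipate is making the dichotomy rigorous: normal position is only defined and well-behaved for \emph{pairs} of sphere systems (Lemma~\ref{normalp}), so putting $S$, $\partial C_A$, and $\partial C_B$ simultaneously in good position — and knowing the result is canonical enough that the cut surfaces $S \cap C_A$ and $S \cap C_B$ interact controllably inside $C_A \cap C_B$ — requires care, possibly by enlarging $\partial C_A \cup \partial C_B$ to a single sphere system or by invoking the uniqueness clause of Lemma~\ref{normalp} to isotope one system at a time. A secondary technical point is handling the superfluous components uniformly so that the constant $M$ genuinely depends only on $n$; this should follow from bounding the complexity of $C_A$, $C_B$ in terms of $n$, but it needs to be checked.
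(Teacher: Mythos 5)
Your setup matches the paper's: reduce to $H=\langle A,B\rangle\cong A*_xB$, realize $H=\pi_1 M_k$, take splitting submanifolds $C_A$, $C_B$, and put the sphere system $S$ for $T$ in normal position. (Your worry about triple normal position is resolved more simply than you suggest: one chooses $S_A=\partial C_A$ and $S_B=\partial C_B$ to be two \emph{disjoint} spheres coming from the common refinement $A'*x*B'$, so $S_A\cup S_B$ is a single sphere system and Lemma \ref{normalp} applies directly, with $S_A\subset C_B$ and $S_B\subset C_A$.)

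The heart of your argument, however, has a genuine gap. First, your dichotomy is either vacuous or false as stated: if \emph{no} component of $S\cap C_B$ splits $C_B$, then $T^B$ is trivial and $\pi_B(T)$ is undefined, which is excluded by hypothesis; and components of $S\cap C_B$ are pieces of $S$, not of $\partial C_A$, so "the only splitting components come from $\partial C_A\cap C_B$" does not parse into a bound on $d_B(A,T)$ without an intermediate object. Second, and more fundamentally, your proposed mechanism is local to a single sphere $s\in\tilde S$, and no contradiction with normal position arises at that level: a single sphere can perfectly well meet each of $\partial\tilde C_A$ and $\partial\tilde C_B$ in one circle apiece, with one piece splitting $C_A$ and another splitting $C_B$. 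The paper's proof is necessarily global: assuming both $d_A(\partial C_B,S)$ and $d_B(\partial C_A,S)$ exceed an explicit constant, it builds the forest $G$ dual on $S$ to the circles of $S\cap(\partial C_A\cup\partial C_B)$, labels edges and vertices by which of $C_A$, $C_B$ the pieces lie in, and proves two combinatorial claims (no $AB$-vertex with both edge types is the vertex of a terminal $a$-tree or $b$-tree; some $AB$-vertex has both edge types). The distance bounds in those claims come from explicit arc surgeries: arcs $\alpha\subset\partial C_A$ (or $\partial C_B$) and $\beta$ in a component of $S\cap C_A$ glue to an essential loop $\gamma\subset C_A\cap C_B$, or one locates a splitting component of $S\cap C_A$ disjoint from $\partial C_B$; essentiality and disjointness are where normal position is actually used. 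The contradiction is then that alternating applications of the first claim produce infinitely many distinct $AB$-vertices in the finite forest $G$. This global propagation step, and the surgery constructions feeding it, are absent from your proposal and cannot be replaced by a single-sphere separation argument.
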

\begin{proof}
Fix $T \in \S'$ that has nontrivial projection to both the free factors complex of $A$ and the free factor complex of $B$. Since $A$ and $B$ overlap we may, as in Section \ref{factorproj}, choose conjugates (still denoted $A$ and $B$) so that $A \cap B = x$, where $x \neq \{1\}$ is a proper free factor of $A$ and $B$.  Write $A = A' * x$ and $B= B'*x$ so that 
 $$H =\langle A,B \rangle \cong  A*_xB \cong A'*x*B'.$$ 
Since $\pi_A(B \le H) = \{[x]\} \subset \pi_A(B)$ in $\mathcal{F}(A)$ and $\pi_B(A \le H) = \{[x]\} \subset \pi_B(A)$ in $ \mathcal{F}(B)$ and by Lemma \ref{properties2}, $\pi_A(\pi_{\S(H)}(T)) = \pi_A(T)$ and $\pi_B(\pi_{\S(H)}(T)) = \pi_B(T)$, we have
 \begin{eqnarray*}
 d_A(B,T) &\le& d_A(\pi_A(B \le H),\pi_{\S(H)}(T)) +\mathrm{diam}_A(\pi_A(B)) \\
 &\le& d_A(x,\pi_{\S(H)}(T)) +4
\end{eqnarray*}
and similarly
\begin{eqnarray*}
 d_B(A,T) &\le& d_B(\pi_B(A \le H),\pi_{\S(H)}(T)) +\mathrm{diam}_B(\pi_B(A)) \\
 &\le& d_B(x,\pi_{\S(H)}(T)) +4
\end{eqnarray*}
Hence, it suffices to show that for $T \in \S'$ with $\pi_A(T) \neq \emptyset \neq \pi_B(T)$
$$\mathrm{min}\{d_A(x,\pi_{\S(H)}(T)), d_B(x,\pi_{\S(H)}(T)) \} \le M-4 ,$$
where $H$ is fixed as above. 

To transition to the topological picture, suppose that rank$(H) =k$ and set $M = M_k$ with a fixed identification $\pi_1M = H$. Let $S_A,S_B$ be two disjoint spheres in $M$ that correspond to the splitting $H = A' *x*B'$ via Proposition \ref{AS}. Take $C_A$ to be the submanifold with boundary $S_A$ and $\pi_1C_A = A$ and take $C_B$ to be the submanifold with boundary $S_B$ and $\pi_1C_B = B$. By construction $S_A \subset C_B$ and $S_B \subset C_A$ and so, in particular, $\partial C_A$ induces a splitting of $B = \pi_1C_B$ whose projection to $\mathcal{F}(B)$ contains $\pi_B(A \le H) = \{[x]\}$. Similarly, $\partial C_B$ induces a splitting of $A = \pi_1C_A$ whose projection to $\mathcal{F}(A)$ contains $\pi_A(B \le H) = \{[x]\}$.

Now choose any tree $T  \in \S'(H)$ with nontrivial projections to $\mathcal{F}(A)$ and $\mathcal{F}(B)$ and let $S$ be the corresponding sphere system in $M$. Put $S$ and $S_A \cup S_B$ in normal position and recall that by Proposition \ref{top}, $\pi_{\mathcal{S}(A)}(T)$ is given by the collection of components of $C_A \cap S$ that split $C_A$. With this set-up, we show that 
$$\text{min}\{d_A(\partial C_B , S), d_B(\partial C_ A, S) \} \le 12 $$
where for any sphere system $R$ in $M$, $\pi_A(R)$ denotes $\pi_A(T_R)$.

Suppose, toward a contraction, that both $d_B(\partial C_A, S)$ and $d_A(\partial C_B, S)$ are greater than $12$ and consider the forest $G$ on $S$ that is dual to the circles of intersection $\partial C_A \cap S$ and $\partial C_B \cap S$. We label the edges of $G$ dual to circles of $\partial C_A \cap S$ with ``$a$'' and those dual to $\partial C_B \cap S$ with ``$b$''. Label the vertices  of $G$ that represent components $S \setminus (\partial C_A \cup \partial C_B)$ contained in $C_A \cap C_B$ with ``$AB$'', those in $C_A$ but not $C_B$ with ``$A$'', and those in $C_B$ but not $C_A$ with ``$B$''.

Call a subtree of $G$ \emph{terminal} if it has a unique vertex that separates it from its complement in $G$. We say a subtree is an $a$- tree (or $b$- tree) if all of its edges are labeled $a$ (or $b$). 

\begin{claim}\label{notrees}
No $AB$-vertex which is the boundary of both an $a$-edge and a $b$-edge is a vertex for either a terminal $a$-tree or a terminal $b$-tree.
\end{claim}

\begin{figure}[htbp]
\begin{center}
\includegraphics[height=50mm]{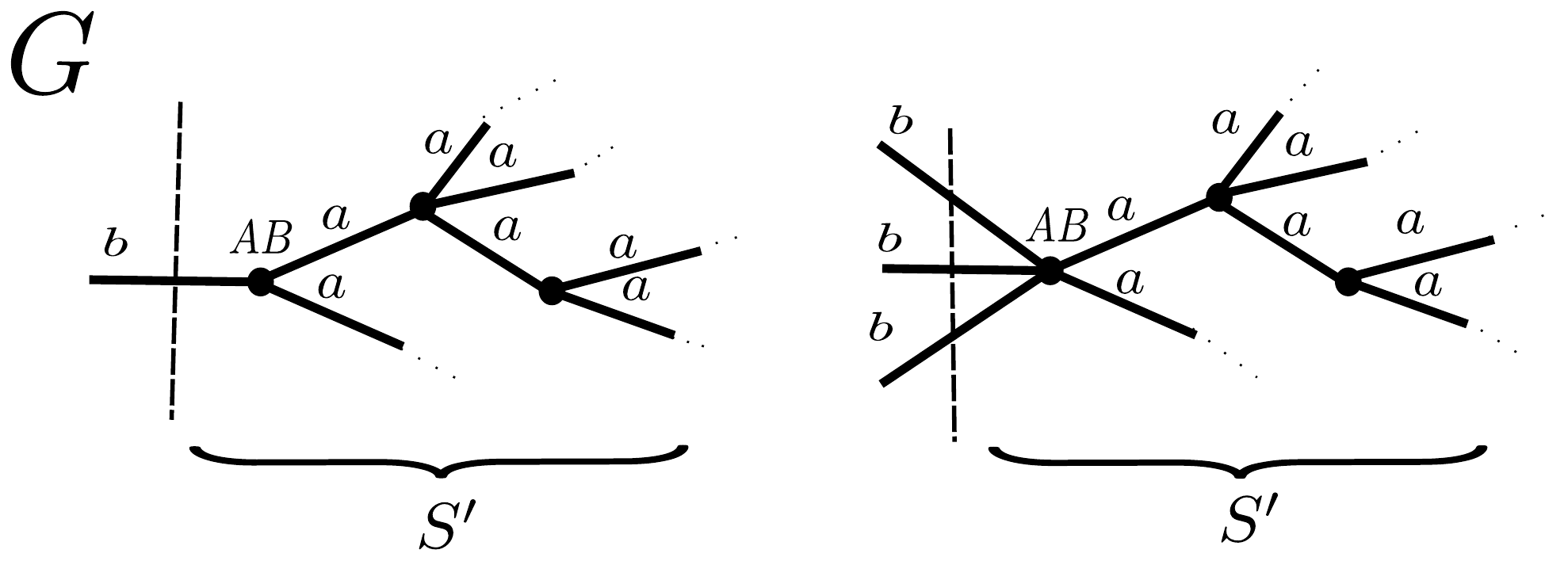}
\caption{Two cases for $S'$}
\label{fig3}
\end{center}
\end{figure}

\begin{proof} [Proof of claim \ref{notrees}]
We prove the claim for terminal $a$-trees. The proof for $b$-trees is obtained by switching the symbols $a$ and $b$.

Suppose that there is an $AB$-vertex $v$ of $G$ which bounds both an $a$-edge and a $b$-edge and is the vertex for a terminal $a$-tree. Observe that the component $S'$ of $S \cap C_B$ that corresponds to the union of $b$-edges at $v$ splits $C_B$ and so it can be used for the projection $\pi_{B}(S)$ (see the remark following Proposition \ref{minimal}). To see that $S'$ splits $C_B$, recall that if this were not the case then $C_B \setminus S' = C_1 \cup C_2$, where $C_1$ is relatively simply connected in $M$. As $S'$ contains a disk of intersection with either $C_B \cap C_A$ or $C_B \setminus C_A$ coming from a valence one vertex of the terminal $a$-tree, this disk cobounds a region $R$ contained in $C_1$ with a disk of $\partial C_A$. This shows that $R$ is relatively simply connected with sphere boundary and basic combinatorial topology implies that $R$ is simply connected in $M$. This implies that $R$ is a $3$-ball and so it can be used to reduce the number of intersections of $S$ and $\partial C_A$, contradicting normal position. Hence, $S'$ splits $C_B$.

Now there are two cases (see Figure \ref{fig3}). Suppose first that $v$ is an endpoint for at least two $b$-edge. This implies that $S'$ has at least two boundary components; each of which is contained in $\partial C_B$. Since these edges share the endpoint $v$, these boundary components co-bound the same component of $S' \cap C_A$. Let $d_1, d_2$ be two boundary components of $S'$ which are not separated by another such boundary component  of $S'$ in $\partial C_B$. Let $\alpha$ be an arc between $d_1$ and $d_2$ in $\partial C_B$ which intersects no other boundary component of $S'$ and let $\beta$ be an arc in $S'$ joining $d_1$ and $d_2$ with $\partial \beta = \partial \alpha$ that does not intersect $\partial  C_A$. Since $S$ and $\partial C_B$ are in normal position, $\beta$ is not homotopic relative endpoints into $\partial C_B$ and so $\gamma = \alpha * \beta$ is an essential loop in $C_A \cap C_B$ which is disjoint from $\partial C_A$ and can be homotoped not to intersect $S'$. Hence, if $[\gamma]$ denotes the conjugacy class of the smallest free factor containing $\langle \gamma \rangle$ then

\begin{eqnarray*}
d_{B}(\partial C_A, S) &\le& \text{diam}_{\mathcal{F}(B)}(S) + d_{B}(\partial C_A, S') \\
&\le& \text{diam}_{\mathcal{F}(B)}(S) + d_{B}(\partial C_A,  [\gamma])  + d_{B} ([ \gamma], S') \\
&\le& 4 + 4 + 4 = 12,
\end{eqnarray*}
a contradiction.

If $v$ is the endpoint of only one $b$-edge, this argument does not work. In this case, $S'$ is a disk and we argue as follows: first, any disk component of $S' \cap C_A$ splits $C_A$ and is disjoint from $\partial C_B$ providing the bound $d_{A}(\partial C_B, S) \le 4$, a contradiction. So assume that each components of $S' \cap C_A$ has at least two boundary components on $\partial C_A$, except possibly the unique component with a boundary component on $\partial C_B$. Among all components of $S' \cap C_A$ choose the component $S''$ which has two boundary components $d_1,d_2$ on $\partial C_A$ which are least separated by other components of $S' \cap \partial C_A$. Let $\alpha$ be an arc in $\partial C_A$ between $d_1$ and $d_2$ that intersects only the circles of $S' \cap \partial C_A$ that separate $d_1$ from $d_2$ in $\partial C_A$. Note that by our choice of $S''$ each circle of $S' \cap \partial C_A$ that is crossed by $\alpha$ bounds a distinct component of $S' \cap C_A$. Let $\beta$ be an arc in $S''$ joining these boundary components with $\partial \beta =\partial \alpha$. As before, $\gamma = \alpha * \beta$ is an essential loop in $C_A \cap C_B$ and we obtain a similar contradiction as above if $\gamma$ intersects $S'$ at most once; so suppose that this is not the case. Since intersections between $S'$ and $\gamma$ must occur along $\alpha$ we conclude that there is a component $C$ of $S' \cap C_A$ which does not have boundary on $\partial C_B$ and intersects $\gamma$ exactly once. This implies that $C$ is nonseparating in $C_A$. Hence, $C$ splits $C_A$ and is disjoint from $\partial C_B$. This provides the upper bound on distance 
\begin{eqnarray*}
d_A(S, \partial C_B) &\le& \mathrm{diam}_A(S) + d_A(C,\partial C_B) \\
&\le& 4+4 =8,
\end{eqnarray*}
a contradiction. 
\end{proof}

\begin{claim}\label{ABvertex}
There exists an $AB$-vertex of $G$ that has both an $a$-edge and a $b$-edge.
\end{claim}

\begin{proof}[Proof of claim \ref{ABvertex}]
Assume to the contrary; that is assume that no component of $S \cap C_A \cap C_B$ has its boundary on both $\partial C_A$ and $\partial C_B$. Let $s \in S$ be a sphere of $S$ that splits $C_B$, this sphere exists by assumption. If $s$ intersection $\partial C_B$ then it does not meet $\partial C_A$ and so $d_B(s,\partial C_A) \le 4$, a contraction. Hence, $s \subset C_B$. If $s$ also splits $C_A$, i.e. if some component of $s \cap C_A$ splits $C_A$, then we conclude $d_A(s, \partial C_B) \le 4$; so it must be the case that every component of $s \cap C_A$ is superfluous, that is, it separates $C_A$ and bounds to one side a component that is relatively simply connected. Note this implies in particular that no component of $s \cap C_A$ is a disk. We show that this also leads to a contraction. The argument is similar to that of the second part of Claim \ref{notrees}.

Among all components of $s \cap C_A$ choose the one with boundary components on $\partial C_A$ that are least separated by circles of $s \cap \partial C_A$, call this component $s''$. As in the poof of Claim \ref{ABvertex}, let $\alpha$ be an arc in $\partial C_A$ between these boundary components of $s''$ that intersects only the circles of $s \cap \partial C_A$ that separate these boundary components. Note that by our choice of $s''$ each circle of $s \cap \partial C_A$ that is crossed by $\alpha$ bounds a distinct component of $s \cap C_A$. Let $\beta$ be an arc in $s''$ joining these boundary components, with the same endpoints as $\alpha$. By normal position, $\gamma = \alpha * \beta$ is an essential loop in $C_A \cap C_B$ that can be homotoped to miss $s''$ and we obtain a similar contradiction as above if $\gamma$ does not intersect any other components of $s \cap C_A$; so assume that this is not the case. Since additional intersections with $s$ must occur along $\alpha$ we conclude that there is a component $C$ of $s \cap C_A$ that intersects $\gamma$ exactly once. This implies that $C$ is nonseparating in $C_A$ and contradicts the statement that all components of $s \cap C_A$ are superfluous. 
\end{proof}

To conclude the proof of the proposition, first locate an $AB$-vertex $v$ that has both an $a$-edge and a $b$-edge. The existence of $v$ is guaranteed by Claim \ref{ABvertex}. By the Claim \ref{notrees}, the $b$-edges at $v$ are not contained in a terminal $b$-tree. Hence, there is an $a$-edge adjacent to this $b$-tree in the complement of the initial vertex; the adjacency necessarily occurring at an $AB$-vertex. At this new vertex, Claim \ref{notrees} now implies that the $a$-edges are not contained in a terminal $a$-tree. Hence we may repeat the process and find a new $AB$-vertex to which we may again apply  Claim \ref{notrees}. Since $G$ is a forest, these $AB$ vertices are distinct and we conclude that $G$ is infinite. This contradicts that fact that edges of $G$ correspond to components of the intersection of transverse sphere systems $S$ and $S_A \cup S_B$ in $M_k$ and must, therefore, be finite.

\end{proof}

\section{Order on overlapping factors}
For trees $T,T' \in \mathcal{K}^0$ and $K \ge 2M +1$, define $\Omega(K,T,T')$ to be the set of (conjugacy classes of) free factors with the property that $A \in \Omega(K,T,T')$ if and only if $d_A(T,T')\ge K$. This definition is analogous to \cite{CLM}, where the authors put a partial order on the set of subsurfaces with large projection distance between two fixed markings. See \cite{MM2} and \cite{BKMM} for the origins of this ordering on subsurfaces. Defining a partial ordering on $\Omega(K,T,T')$, however, requires a more general notion of projection than is available in our situation. We resolve this issue by defining a relation which is thought of as an ordering, but is not necessarily transitive. That this is justified is the content of Lemma \ref{order}.

For $A, B \in \Omega(K,T,T')$ that \emph{overlap} we take $A \prec B$ to mean that $d_A(T,B) \ge M+1$, where $M$ is as in Proposition \ref{B}.  As noted above, this does not define a partial order. In particular, if $A \prec B$ and $B \prec C$ there is no reason to expect that $A$ and $C$ will meet as free factors. We do, however, have the following version of Proposition 3.6 from \cite{CLM}.

\begin{proposition} \label{factororder}
Let $K \ge 2M +1$ and choose $A,B \in \Omega(K,T,T')$ that overlap. Then $A$ and $B$ are ordered and the following are equivalent
\begin{multicols}{2}
\begin{enumerate}
\item $A \prec B$
\item $d_A(T,B) \ge M+1$
\item $d_B(T,A) \le M$
\item $d_B(T',A) \ge M+1$
\item $d_A(T',B) \le M$
\end{enumerate}
\end{multicols}
\end{proposition}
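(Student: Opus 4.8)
The plan is to derive all five equivalences from the Behrstock inequality of Proposition \ref{B} together with the triangle inequality in the free factor complexes of $A$ and $B$, exactly in the spirit of the proof of Proposition 3.6 in \cite{CLM}. The first thing to establish is that $A$ and $B$ are \emph{ordered}, i.e.\ that at least one of $A \prec B$ or $B \prec A$ holds. Since $A, B \in \Omega(K,T,T')$ overlap, and $T$ has nontrivial projection to both $\mathcal{F}(A)$ and $\mathcal{F}(B)$ (as $T \in \mathcal{K}^0$), Proposition \ref{B} gives $\min\{d_A(B,T), d_B(A,T)\} \le M$. Say $d_B(A,T) \le M$; I then want to conclude $d_A(B,T) \ge M+1$, i.e.\ $A \prec B$. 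This is where membership in $\Omega(K,T,T')$ is used: since $d_A(T,T') \ge K \ge 2M+1$, the triangle inequality $d_A(T,T') \le d_A(T,B) + d_A(B,T')$ forces $\max\{d_A(T,B), d_A(B,T')\} \ge M+1$. To see it is the first of these that is large, I apply Proposition \ref{B} again with $T'$ in place of $T$: $\min\{d_A(B,T'), d_B(A,T')\} \le M$; combined with $d_B(T,T') \ge K \ge 2M+1$ and $d_B(A,T) \le M$ we get $d_B(A,T') \ge M+1$ (again by the triangle inequality in $\mathcal{F}(B)$), hence $d_A(B,T') \le M$, and therefore $d_A(B,T) \ge M+1$. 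This simultaneously shows $A$ and $B$ are ordered and that (3) implies (2).

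With that machine in hand, the equivalences fall out by symmetric bookkeeping. I would organize it as a cycle of implications, say $(1)\Leftrightarrow(2)$ by definition, $(2)\Rightarrow(3)$ via Proposition \ref{B} (if $d_A(T,B) \ge M+1$ then the minimum in Behrstock's inequality cannot be achieved by $d_A(B,T)$, so $d_B(A,T) \le M$), and $(3)\Rightarrow(2)$ by the argument of the previous paragraph. For the statements involving $T'$: from $d_B(A,T) \le M$ and $d_B(T,T') \ge 2M+1$, the triangle inequality gives $d_B(A,T') \ge M+1$, which is (4); conversely from (4) and Proposition \ref{B} applied at $T'$ we get $d_A(B,T') \le M$, which is (5); and from (5), $d_A(T,T') \ge 2M+1$, and the triangle inequality in $\mathcal{F}(A)$ we recover $d_A(B,T) \ge M+1$, which is (2). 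Thus $(3)\Rightarrow(4)\Rightarrow(5)\Rightarrow(2)$, closing the loop. Throughout I would use the standing bounds from Proposition \ref{properties1}, namely that $\operatorname{diam}_{\mathcal{F}(A)}(\pi_A(B)) \le 4$ and similar, to treat $d_A(B,\cdot)$ as a genuine (coarse) distance satisfying the triangle inequality — one should be mildly careful since these are set-valued projections, but the additive error is absorbed into $M$ once $K$ is taken $\ge 2M+1$ with $M$ the constant from Proposition \ref{B} (which already accounts for the $+4$ coarseness, as in its proof).

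The main obstacle I anticipate is bookkeeping rather than conceptual: making sure that $M$ is a single constant that simultaneously works as the Behrstock constant, absorbs the diameter-$4$ coarseness of the projections, and makes the threshold $K = 2M+1$ large enough that each triangle-inequality step genuinely forces one term to exceed $M$ rather than merely tie it. The asymmetry between "$\le M$" and "$\ge M+1$" in the five conditions is what makes the argument work — it is a strict/non-strict dichotomy around the Behrstock bound — so I would be careful to track which inequalities are strict. A secondary subtlety is that Proposition \ref{B} must be applicable at both $T$ and $T'$; this requires $\pi_A(T), \pi_B(T), \pi_A(T'), \pi_B(T')$ all nonempty, which is automatic because $A, B \in \Omega(K,T,T')$ presupposes $d_A(T,T')$ and $d_B(T,T')$ are defined, hence all four projections exist. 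Once these points are pinned down the proof is a short symmetric argument.
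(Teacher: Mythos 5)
Your proof is correct and follows essentially the same route as the paper's: $(1)\Leftrightarrow(2)$ by definition, $(2)\Rightarrow(3)$ by Proposition \ref{B}, $(3)\Rightarrow(4)$ and $(5)\Rightarrow(2)$ by the triangle inequality with $K \ge 2M+1$, and $(4)\Rightarrow(5)$ by Proposition \ref{B} applied at $T'$, with orderedness deduced from the same cycle. The only cosmetic difference is that you worry about absorbing the diameter-$4$ coarseness into $M$, which is unnecessary here since $d_A(\cdot,\cdot)$ is defined as a diameter and so satisfies the triangle inequality exactly.
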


\begin{proof}
$(1)$ implies $(2)$ is by definition, $(2)$ implies $(3)$ is Proposition \ref{B}, $(3)$ implies $(4)$ is the observation that
$$d_B(T',A) \ge d_B(T,T') - d_B(T,A) \ge 2M+1 - M = M+1,$$
and the proofs of the remaining implications are similar. To show that $A,B \in \Omega(K,T,T')$ which overlap are ordered, note that by the equivalence of the above conditions if $A \nprec B$ then $d_A(T,B) \le M$ and if $B \nprec A$, switching the roles of $A$ and $B$, $d_A(T',B) \le M$ so that 
$$d_A(T,T') \le d_A (T,B) + d_A(B,T') \le 2M \le K,$$
a contradiction.
\end{proof}

\section{Normal forms in $A(\Gamma)$}
Let $\Gamma$ be a simplicial graph with vertex set $V(\Gamma) = \{s_1, \ldots, s_n \}$ and edge set $E(\Gamma) \subset V(\Gamma) \times V(\Gamma)$. The right-angled Artin group, $A(\Gamma)$, associated to $\Gamma$ is the group presented by
$$\langle s_i \in V(\Gamma): [s_i,s_j] =1 \iff (s_i,s_j) \in E(\Gamma) \rangle .$$
We refer to $s_1, \ldots, s_n$ as the \emph{standard generators} of $A(\Gamma)$.

\subsection{The \cite{CLM} partial order}
In this section, we briefly recall a normal form for elements of a right-angled Artin group. For details see Section $4$ of \cite{CLM} and the references provided there. Fix a word $w = x_1^{e_1} \ldots x_k^{e_k}$  in the vertex generators of $A(\Gamma)$, with $x_i \in \{s_1, \ldots, s_n \}$ for each $i=1, \ldots, k$. Each $x_i^{e_i}$ together with its index, which serves to distinguish between duplicate occurrences of the same generator, is a \emph{syllable} of the word $w$. Let $\syl(w)$ denote the set of syllables for the word $w$. We consider the following $3$ moves that can be applied to $w$ without altering the element in $A(\Gamma)$ it represents:
\begin{enumerate}
\item If $e_i = 0$, then remove the syllable $x_i^{e_i}$.
\item If $x_i = x_{i+1}$ as vertex generators, then replace $x_i^{e_i}x_{i+1}^{e_{i+1}}$ with $x_i^{e_i+e_{i+1}}$.
\item If the vertex generators $x_i$ and $x_{i+1}$ commute, then replace $x_i^{e_i}x_{i+1}^{e_{i+1}}$ with $x_{i+1}^{e_{i+1}}x_i^{e_i}$.
\end{enumerate}

For $g \in A(\Gamma)$, set Min$(g)$ equal to the set of words in the standard generators of $A(\Gamma)$ that have the fewest syllables among words representing $g$. We refers to words in Min$(g)$ as the \emph{normal form} representatives of $g$. Hermiller and Meiler showed in \cite{HerM} that any word representing $g$ can be brought to any word in Min$(g)$ by applications of the three moves above. Since these moves do not increase the word (or syllable) length, we see that words in Min$(g)$ are also minimal length with respect to the standard generators and that any two words in Min$(g)$ differ by repeated application of move $(3)$ only. It is verified in \cite{CLM} that for any $g \in A(\Gamma)$ and $w,w' \in \text{Min}(g)$ there is a natural bijection between $\syl(w)$ and $\syl(w')$ allowing one to define for $g \in A(\Gamma)$, $\syl(g) = \syl(w)$ for $w \in \text{Min}(g)$. For each $g \in A(\Gamma)$, this permits a strict partial order $\prec$ on the set $\syl(g)$ by setting $x_i^{e_i} \prec x_j^{e_j}$ if and only if for every $w \in \text{Min}(g)$ the syllable  $x_i^{e_i}$ precedes $x_j^{e_j}$ in the spelling of $w$. 

\subsection{Order on meeting syllables}
In analogy with the weaker notion of order on free factors, for $g \in A(\Gamma)$ let $\overset{m}{\prec}$ be the relation on $\syl(g)$ defined as follows: $x_i^{e_i} \overset{m}{\prec} x_j^{e_j}$ if and only if $x_i^{e_i} \prec x_j^{e_j}$ and there is a normal form $w \in \text{Min}(g)$ where $x_i^{e_i}$ and $x_j^{e_j}$ are adjacent. The following observation will be important in proving the lower bound on distance in our main theorem.

\begin{lemma} \label{trans}
The strict partial ordering $\prec$ on $\syl(g)$ is the transitive closure of the relation $\overset{m}{\prec}$.
\end{lemma}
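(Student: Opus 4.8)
The plan is to show two inclusions of relations on $\syl(g)$: first that the transitive closure of $\overset{m}{\prec}$ is contained in $\prec$, and second the reverse. The first inclusion is essentially immediate: by definition $x_i^{e_i} \overset{m}{\prec} x_j^{e_j}$ implies $x_i^{e_i} \prec x_j^{e_j}$, and $\prec$ is already transitive (it is a strict partial order, as recalled from \cite{CLM}), so any chain $a \overset{m}{\prec} a_1 \overset{m}{\prec} \cdots \overset{m}{\prec} b$ yields $a \prec b$. Hence the transitive closure of $\overset{m}{\prec}$ is a subrelation of $\prec$.

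The substantive direction is the reverse inclusion: if $x_i^{e_i} \prec x_j^{e_j}$, then there is a finite chain $x_i^{e_i} = a_0 \overset{m}{\prec} a_1 \overset{m}{\prec} \cdots \overset{m}{\prec} a_m = x_j^{e_j}$. I would argue by induction on the number of syllables of $g$ strictly between $a$ and $b$ in a fixed normal form, or more cleanly by induction on the minimal ``gap'' — the least number of intervening syllables over all $w \in \mathrm{Min}(g)$ with $a$ before $b$. Fix such a $w \in \mathrm{Min}(g)$ realizing the minimal gap between $a = x_i^{e_i}$ and $b = x_j^{e_j}$. If they are adjacent in $w$, then $a \overset{m}{\prec} b$ directly and we are done. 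Otherwise, let $c$ be the syllable immediately following $a$ in $w$. The key dichotomy is: either $a \prec c$, or $a$ and $c$'s generators commute and $a \not\prec c$. In the first case, since $c$ lies strictly between $a$ and $b$ and $a \prec c$, I also want $c \prec b$ or $c$ adjacent-comparable to $b$; I would observe that $a \prec c$ with $a,c$ adjacent in $w$ gives $a \overset{m}{\prec} c$, and then apply the induction hypothesis to the pair $(c,b)$, whose gap in $w$ is strictly smaller, to get a chain $c \overset{m}{\prec} \cdots \overset{m}{\prec} b$; concatenating gives the desired chain from $a$ to $b$. In the second case, $a$ and $c$ commute and $a \not\prec c$; then move $(3)$ applied to $w$ produces another $w' \in \mathrm{Min}(g)$ in which $c$ now precedes $a$, and $a$ still precedes $b$ (since $a \prec b$ holds in every normal form, and $c \not\succ a$ means swapping them cannot move $a$ past $b$), so the gap between $a$ and $b$ in $w'$ is strictly smaller; apply the induction hypothesis to $(a,b)$ with $w'$.

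To make the induction well-founded I should verify the base and the strict decrease carefully: the quantity to induct on is $\mathrm{gap}(a,b) = \min\{\, |\{\text{syllables strictly between } a \text{ and } b \text{ in } w\}| : w \in \mathrm{Min}(g),\ a \text{ precedes } b \text{ in } w \,\}$, which is a nonnegative integer, well-defined because $a \prec b$ guarantees every normal form has $a$ before $b$. In the commuting case I must check that after the swap $a$ and $b$ are still in the correct order — this uses that $a \prec b$ holds in \emph{every} normal form, so in particular in $w'$. The main obstacle I anticipate is precisely this bookkeeping: ensuring that in the ``push $a$ rightward'' step one never accidentally needs to move $a$ past $b$, and that the recursion always applies to a pair still ordered by $\prec$ with strictly smaller gap. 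The underlying fact that makes this work is that the syllable immediately after $a$ in a normal form is either $\prec$-above $a$ (in which case it contributes to the chain) or commutes with $a$ and is not above it (in which case it can be shuffled left without disturbing the $a$--$b$ relation); this is exactly the structure that move $(3)$ preserves, so no genuinely new idea beyond the normal form theory of \cite{HerM,CLM} is needed, only a careful induction.
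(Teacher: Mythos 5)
Your forward direction (transitive closure of $\overset{m}{\prec}$ is contained in $\prec$) is fine and matches the paper's implicit treatment. The reverse direction, however, has a genuine gap in Case 1: after extracting $a \overset{m}{\prec} c$ from the adjacent pair $a, c$ with $a \prec c$, you apply the induction hypothesis to $(c,b)$ — but the induction hypothesis only produces a chain when $c \prec b$, and you never establish this. It can genuinely fail. Take $\Gamma$ with vertices $p,q,t$ and the single edge $\{q,t\}$, and $g = pqt$ with $a = p$, $b = t$. The normal forms are $pqt$ and $ptq$, so $p \prec q$, $p \prec t$, but $q$ and $t$ are unordered. Working in $w = pqt$, your Case 1 fires ($c = q$, $a \prec c$), and the recursion on $(q,t)$ is vacuous since $q \not\prec t$; the correct chain is the single link $p \overset{m}{\prec} t$ witnessed by $ptq$, which your procedure never finds. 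In the ``fixed normal form'' version of your induction the argument therefore breaks outright. In the ``minimal gap'' version it is true that the successor $c$ of $a$ satisfies $c \prec b$ — but that is exactly the nontrivial assertion that every syllable lying between $a$ and $b$ in a gap-minimizing normal form belongs to the order-interval $\{s : a \prec s \prec b\}$, i.e.\ that any in-between syllable with $s \not\prec b$ can be shuffled rightward past $b$. That shuffling-on-the-right step is the mirror image of your Case 2 and is precisely what you would need to prove; as written your dichotomy only shuffles on the left of the gap.

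The paper avoids this by reversing the order of operations: it first takes a chain $a = a_1 \prec \cdots \prec a_n = b$ of \emph{maximal length} in the partial order, and only then runs the gap-minimization argument on each consecutive pair $a_i \prec a_{i+1}$. Maximality guarantees there is no $s$ with $a_i \prec s \prec a_{i+1}$, so the syllable $s$ immediately following $a_i$ in a gap-minimizing normal form satisfies $a_i \prec s$ (else shuffle left) and $s \not\prec a_{i+1}$ (else the chain extends), whence $s$ can be shuffled right past $a_{i+1}$, contradicting minimality; consecutive chain elements are therefore adjacent in some normal form and so $\overset{m}{\prec}$-related. If you want to keep your greedy left-to-right construction, you must first prove the interval characterization of gap-minimizing normal forms; otherwise, adopting the maximal-chain device is the cleaner fix.
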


\begin{proof}
From the definition of $\overset{m}{\prec}$ it suffices to show that if $x_i^{e_i} \prec x_j^{e_j}$ in $\syl(w)$ then $x_i^{e_i}$ and $x_j^{e_j}$ cobound a chain of syllables for which adjacent terms are ordered by $\overset{m}{\prec}$. To this end, let 
$$x_i^{e_i} = a_1 \prec a_2 \prec \ldots \prec a_n = x_j^{e_j} $$
be a chain of maximal length joining $x_i^{e_i}$ and $x_j^{e_j}$ in $\syl(g)$. We show that each pair of consecutive terms in the chain is ordered by $\overset{m}{\prec}$. Take $1\le i\le n$ and consider the $w\in \mathrm{Min}(g)$ for which $a_i$ and $a_{i+1}$ are separated by the least number of syllables in $w$. If $a_i$ and $a_{i+1}$ are adjacent in $w$ we are done, otherwise write
$$w=w_1 \cdot a_i \cdot s \cdot w_2 \cdot a_{i+1} \cdot w_3$$
where $w_1,w_2,w_3$ are possibly empty subwords of $w$ and $s$ is a syllable of $w$. By our choice of $w$, $a_i \prec s$, for otherwise we could commute $s$ past $a_i$ resulting in a normal form for $g$ with fewer syllables separating $a_i$ and $a_{i+1}$.  Then either $s \prec a_{i+1}$, which contradicts the assumption that the chain is maximal, or $s$ can be commuted past $a_{i+1}$ resulting in a normal form $w' \in \mathrm{Min}(g)$ with
$$w' = w_1 \cdot a_i \cdot w_2' \cdot a_{i+1} \cdot s \cdot w_3' $$
where $w_2'$ is a subword of $w_2$. This contradicts our choice of $w$. Hence, $a_i$ and $a_j$ must occur consecutively in $w$ and so $a_i \overset{m}{\prec} a_{i+1}$ as required.
\end{proof}

\section{Large projection distance}
Fix an admissible system $ \mathcal{S} = (\mathcal{A},\{f_i\})$ for $\mathbb{F}$ with coincidence graph $\Gamma$. This determines a homomorphism $\phi=\phi_{\mathcal{S}}: A(\Gamma) \to \Out(\mathbb{F})$ by mapping the vertex generator $s_i$ to the outer automorphism $f_i$.   

For $g \in A(\Gamma)$ with $w = x_1^{e_1} \dots x_k^{e_k} \in \text{Min}(g)$, let  $J: \{ 1, \ldots,k\} \to \{ 1, \ldots, n \}$ be defined so that $x_i = s_{J(i)}$, as generators of $A(\Gamma)$. Hence, $\phi(x_i) = f_{J(i)}$ is supported on $A_{J(i)}$. Write
$$A^w(x_i^{e_i})  = \phi(x_1^{e_1} \ldots x_{i-1}^{e_{i-1}})(A_{J(i)})$$
for $i = 2, \ldots, k$ and $A^w(x_1^{e_1}) = A_{J(1)}$. This defines a map
$$A^w : \text{syl}(w) \to \mathcal{FF} .$$
It is verified in \cite{CLM} that this map is well-defined for $g \in A(\Gamma)$, independent of the choice of normal form. 
Then, set $A^{g} := A^w$ for $w \in \text{Min}(g)$ and set $\fact(g)$ equal to the image of the map $A^g:  \text{syl}(g) \to \mathcal{FF}$. We refer to the free factors in $\fact(g)$ as the \emph{active free factors} for $g \in A(\Gamma)$.  For notional convenience, set $B_i = A_{J(i)}$ and $g_i = \phi(x_i^{e_i}) = f^{e_i}_{J(i)}$. Note that this notation is for a fixed $w \in \text{Min}(g)$.

Having developed the necessary tools in the free group setting, the proof of the first part of the following theorem is a verification that the arguments of \cite{CLM} extend to this situation, even with a weaker form of Proposition \ref{B} . We repeat their argument here for completeness. Let $M$ be the constant determined in Proposition \ref{B} and let $L = 4$ be the Lipschitz constant for the projection $\pi_A: \K \to\mathcal{F}(A)$, $A\in \FF$. In the appendix we determine the corresponding Lipschitz constant for $\mathcal{X} \to \mathcal{F}(A)$. 


\begin{theorem} \label{factordistance}
Given admissible collection $\mathcal{A}$ of free factors for $\mathbb{F}$ with coincidence graph $\Gamma$ and $T \in \mathcal{K}^0$, there is a $K \ge 5M+3L$ so that if outer automorphisms $\{f_i\}$ are chosen to make $(\mathcal{A},\{f_i\})$ an admissible system with $\ell_{A_i}(f_i) \ge 2K$ then the induced homomorphism $\phi: A(\Gamma) \to \Out(\mathbb{F})$ satisfies the following: For any $g \in A(\Gamma)$ with normal form $w = x_1^{e_1} \ldots x_k^{e_k} \in \mathrm{Min}(g)$,
\begin{enumerate}
\item $d_{A^g(x_i^{e_i})} (T, \phi(g)T) \ge K|e_i|$ for $1 \le i \le k$. 
In particular, $\fact(g) \subset \Omega(K,T,\phi (g) T)$.

\item If $x_i^{e_i} \overset{m}{\prec} x_j^{e_j}$, then $A^g(x_i^{e_i})$ and $A^g(x_j^{e_j})$ overlap and 
$$A^g(x_i^{e_i} )\prec A^g(x_j^{e_j}).$$
\end{enumerate}

\end{theorem}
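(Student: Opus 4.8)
The plan is to follow the structure of the analogous argument in \cite{CLM}, adapting it to the free-group setting using the tools developed above, especially the version of Behrstock's inequality in Proposition \ref{B}, the basic properties of subfactor projections in Propositions \ref{properties1} and \ref{properties2}, and Proposition \ref{factororder}. I will write $B_i = A_{J(i)}$ and $g_i = \phi(x_i^{e_i}) = f_{J(i)}^{e_i}$ and $h_i = g_1 \cdots g_i$ (with $h_0 = 1$), so that $A^g(x_i^{e_i}) = h_{i-1}(B_i)$, and I put $K = $ (some multiple of $M$ and $L$, to be fixed in the course of the argument, at least $5M + 3L$) and assume $\ell_{A_i}(f_i) \ge 2K$.

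\textbf{Part (1).} The key point is that $h_{i-1}^{-1}$ carries the projection $\pi_{h_{i-1}(B_i)}$ back to $\pi_{B_i}$, and conjugating $T$ and $\phi(g)T$ by $h_{i-1}^{-1}$ reduces the claim to estimating $d_{B_i}(h_{i-1}^{-1}T, g_i \cdots g_k\, T)$. Now $g_i = f_{J(i)}^{e_i}$ acts on $\mathcal{F}(B_i)$ with translation length $\ell_{B_i}(f_i)|e_i| \ge 2K|e_i|$ (since $f_i|_{B_i}$ is fully irreducible, by Proposition \ref{fftrans}, and $\ell_{\FF}(f^n) = n\,\ell_{\FF}(f)$). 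The outer automorphisms $g_{i+1}, \ldots, g_k$ that come after $x_i^{e_i}$ either are supported on factors disjoint from $B_i$ — in which case, after conjugating into $B_i$, they act trivially on $\mathcal{F}(B_i)$ by Proposition \ref{properties2}(3)–(4) and so do not move the projection — or are supported on factors that are \emph{not} disjoint from $B_i$, and for those I use that any normal form word has the syllable $x_i^{e_i}$ commuting past all later syllables whose generators commute with $x_i$, so I can arrange that the remaining later syllables contribute boundedly via the Lipschitz estimate $d_{B_i}(\pi_{B_i}(\cdot), \pi_{B_i}(\cdot)) \le L \cdot (\text{adjacent-tree distance})$ together with the coarse Lipschitz property of $\pi_{B_i}$ (Proposition \ref{properties1}(1)–(2)). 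A symmetric statement handles $g_1, \ldots, g_{i-1}$ after the conjugation. Putting these together with the triangle inequality gives $d_{B_i}(h_{i-1}^{-1}T, g_i\cdots g_k T) \ge 2K|e_i| - (\text{bounded error}) \ge K|e_i|$ once $K$ is taken large enough relative to $M$ and $L$; this is exactly the argument of \cite{CLM} transcribed. The inclusion $\fact(g) \subset \Omega(K, T, \phi(g)T)$ is then immediate from the definition of $\Omega$.

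\textbf{Part (2).} Here $x_i^{e_i} \overset{m}{\prec} x_j^{e_j}$ means there is a normal form $w' \in \mathrm{Min}(g)$ in which $x_i^{e_i}$ and $x_j^{e_j}$ are adjacent; in particular the generators $x_i$ and $x_j$ do \emph{not} commute, so the supporting factors $A_{J(i)}$ and $A_{J(j)}$ are \emph{not} disjoint — by the admissible collection hypothesis they therefore overlap, and since overlapping is preserved under the $\Out(\F)$-action (Proposition \ref{properties2}(1)), the conjugates $A^g(x_i^{e_i}) = h_{i-1}(B_i)$ and $A^g(x_j^{e_j}) = h_{j-1}(B_j) = h_{i-1} g_i (B_j)$ overlap (using $w'$, so that $h_{j-1} = h_{i-1}g_i$ on the nose, i.e.\ nothing between them moves things). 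To get the order, by Proposition \ref{factororder} and Part (1) it suffices to show $d_{A^g(x_i^{e_i})}(T, A^g(x_j^{e_j})) \ge M+1$; conjugating by $h_{i-1}^{-1}$ this becomes $d_{B_i}(h_{i-1}^{-1}T, \pi_{B_i}(g_i(B_j))) \ge M+1$. But $g_i(B_j)$ differs from $B_j$ only by the action of $g_i = f_{J(i)}^{e_i}$, and $\pi_{B_i}(B_j)$ has diameter $\le 4$ in $\mathcal{F}(B_i)$, while $g_i$ translates $\mathcal{F}(B_i)$ by at least $2K|e_i| \ge 2K$; combined with the lower bound $d_{B_i}(h_{i-1}^{-1}T, \pi_{B_i}(B_j)) \ge K - (\text{bounded})$ from a Part-(1)-type estimate, the triangle inequality forces the distance from $h_{i-1}^{-1}T$ to $\pi_{B_i}(g_i B_j)$ to be large, in particular $\ge M+1$. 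Hence $A^g(x_i^{e_i}) \prec A^g(x_j^{e_j})$.

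\textbf{Expected main obstacle.} The delicate point is bookkeeping the ``bounded error'' in Part (1): one must be careful that the later (and earlier) syllables whose generators do \emph{not} commute with $x_i$ really do contribute only a uniformly bounded amount to $d_{B_i}$, rather than accumulating. In \cite{CLM} this is handled by choosing the normal form so that $x_i^{e_i}$ sits as far right (resp.\ left) as possible among its commuting class and then noting that the projection of the whole remaining word is coarsely the projection of a single tree; the subtlety here is that we only have the \emph{weaker} Behrstock inequality of Proposition \ref{B} (with its non-transitive order $\prec$ on factors), so the argument controlling interference between successive active factors must be routed through Proposition \ref{factororder} and Lemma \ref{trans} rather than an honest partial order. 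Getting the constants to line up — choosing $K$ large enough that $2K$ minus the accumulated bounded error still exceeds $K$, and exceeds $M+1$ for the order claim — is routine once the structure is in place, and is essentially forced by the inequality $K \ge 5M + 3L$ in the statement.
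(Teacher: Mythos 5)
Your skeleton matches the paper's: induction on syllable length, a decomposition of $\phi(g)$ into the part before $x_i^{e_i}$, the commuting part, $g_i$ itself, and the part after, with equivariance of the projections (Proposition \ref{properties2}) reducing everything to estimates in $\mathcal{F}(B_i)$, and Proposition \ref{factororder} for the ordering statement. But the central estimate in Part (1) is not established by what you wrote. You claim the later syllables whose generators do not commute with $x_i$ ``contribute boundedly via the Lipschitz estimate.'' That cannot work: the Lipschitz property of $\pi_{B_i}$ only gives $d_{B_i}(T, cT) \le L\cdot d_{\K}(T,cT)$, which grows with the length of $c=g_{i+1}\cdots g_k$ and is in no way uniformly bounded. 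The actual mechanism, which is the heart of the proof, is to apply the \emph{induction hypothesis to the subword} $c$ (and to $a^{-1}=g_l^{-1}\cdots g_1^{-1}$): since $c$ begins with a syllable not commuting with $x_i^{e_i}$, induction gives $d_{B_{i+1}}(T,cT)\ge K$, and since $d_{B_{i+1}}(T,B_i)\le K/2-M$ one gets $d_{B_{i+1}}(B_i,cT)\ge M+1$; Behrstock's inequality (Proposition \ref{B}) then flips this to $d_{B_i}(B_{i+1},cT)\le M$, whence $d_{B_i}(T,cT)\le d_{B_i}(T,B_{i+1})+M\le K/2$. You gesture at this in your ``obstacle'' paragraph (``the projection of the whole remaining word is coarsely the projection of a single tree''), but you never carry it out, and the route you do write down is the one that fails. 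Relatedly, your $K$ must be taken of the form $5M+3L+2\max_{i,j} d_{A_i}(T,A_j)$ — i.e.\ it depends on $T$ and the collection $\mathcal{A}$, not just on $M$ and $L$ — since the argument repeatedly needs $d_{A_i}(T,A_j)\le K/2-M$ for overlapping pairs.

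Your Part (2) is essentially sound and takes a mildly different route from the paper: you verify the defining condition $d_{A^g(x_i^{e_i})}(T,A^g(x_j^{e_j}))\ge M+1$ directly, whereas the paper reduces by induction to the case $j=k$ and verifies the equivalent condition $d_{A^g(x_k^{e_k})}(A^g(x_i^{e_i}),\phi(g)T)\ge M+1$ of Proposition \ref{factororder}; both work, and your observation that adjacency in a normal form of $\prec$-ordered syllables forces the generators to be non-commuting (hence the factors to overlap, by admissibility) is exactly right. But note that your Part (2) estimate again leans on a ``Part-(1)-type estimate'' for $d_{B_i}(h_{i-1}^{-1}T, B_j)$, so it inherits the gap above; once the induction-plus-Behrstock step is in place, both parts close.
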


\begin{proof}

Set $K = 5M +3L + 2\cdot \text{max} \{ d_{A_i}(T,A_j) \}$ and observe that this choice of $K$ has the property that if $A_i$ and $A_j$ overlap then $d_{A_i}(T,A_j) \le K/2 -M$. The proof of $(1)$ is by induction on the syllable length of $w \in \mathrm{Min}(g)$.  If $w$ has only one syllable then 
$$d_{A_{J(1)}}(T,f^{e_1}_{J(1)}T) \ge \ell_{A_{J(1)}}(f^{e_1}_{J(1)}) \ge 2K|e_1|.$$

Now suppose that $(1)$ has been proven for all elements in $A(\Gamma)$ that have representative with less than or equal to $k-1$ syllables. Take $g \in A(\Gamma)$ with $w = x_1^{e_1} \dots x_k^{e_k} $ a $k$-syllable normal form representative for $g$. Using the notation at the beginning of this section, write $\phi(w)$ as $g_1 \ldots g_k$ so that for  $1 \le i \le k$ we must show 
$$d_{g_1 \ldots g_{i-1}B_i} (T, g_1 \ldots g_k T) \ge K|e_i|.$$

With $x_i^{e_i} \in \mathrm{syl}(g)$ fixed and $g_i = \phi(x_i^{e_i})$, we write $\phi(g)$ as $abg_ic$ by choosing a normal form $w \in \text{Min}(g)$ so that
\begin{enumerate}
\item $c = g_{i+1} \ldots g_k$ and $g_i$ and $g_{i+1}$ do not commute,
\item $a = g_1 \ldots g_l$ with $l$ the largest index among $w \in \text{Min}(g)$ so that $g_l$ and $g_i$ do not commute, and
\item $b = g_{l+1}\ldots g_{i-1}$, all of which commute with $g_i$. 
\end{enumerate}
Note that we allow $a,b$ or $c$ to be empty.

Using this notation, we show that $d_{abB_i}(T,abg_icT) \ge K|e_i|$. By Lemma \ref{properties2} and the triangle inequality,
\begin{eqnarray}
d_{abB_i}(T,abg_icT) &=& d_{B_i}(b^{-1}a^{-1}T, g_icT) \\
&\ge& d_{B_i}(T, g_iT ) - d_{B_i}(b^{-1}a^{-1}T,T ) -d_{B_i}(g_icT, g_iT ). 
\end{eqnarray}

Since $b$ is written in terms of generators that restrict to the identity outer automorphism on $B_i$ and $g_i$ restricts to an isometry of the free factor complex of $B_i$, Lemma \ref{properties2} implies
$$d_{B_i}(b^{-1}a^{-1}T,T ) = d_{B_i}(a^{-1}T,T)$$
and
$$d_{B_i}(g_icT, g_iT ) =d_{B_i}(cT, T ). $$

This, along with our hypothesis on translation length, allows us to write
\begin{eqnarray}
d_{abB_i}(T,abg_icT) \ge 2K|e_i| - d_{B_i}(a^{-1}T,T) - d_{B_i}(cT, T ).
\end{eqnarray}
We use the induction hypotheses to show that both terms subtracted in $(3)$ are $\le K/2$. This will complete the proof of (1). First, observe that each of  $a^{-1} = g_l^{-1} \ldots g_1^{-1}$ and $c = g_{i+1} \ldots g_k$ is either trivial or is the image of a normal form subword of $w$ with strictly fewer than $k$ syllables and begins with a syllable not commuting with $x_i^{e_i}$. This is all that is needed for the remainder of the proof. We show the inequality $d_{B_i}(a^{-1}T,T) \le K/2$, the other appears in \cite{CLM} where the proof follows through without change.

By the induction hypothesis applied to $a^{-1}$,
$$d_{B_{l}}(T,a^{-1}T) = d_{B_{l}}(T,g_l^{-1} \ldots g_1^{-1}T)  \ge K|e_{l}|,$$
and so since $d_{B_{l}}(T,B_i) \le K/2 -M$ by our choice of $K$, we have $d_{B_{l}}(B_i,a^{-1}T)\ge K-(K/2 -M) \ge M+1$. Since $B_i$ and $B_l$ overlap, Proposition \ref{B} implies that $d_{B_i}(B_l, a^{-1}T) \le M$, so by another application of $d_{B_{i}}(T,B_l) \le K/2 -M$,
$$d_{B_i}(a^{-1}T,T) \le M+(K/2 -M) \le K/2 ,$$
as required. This completes the proof of the first part of the theorem.

The second part of the theorem is also proven by induction on syllable length. If $g \in A(\Gamma)$ has syllable length equal to $1$, then there is nothing to prove. Suppose that the ordering statement holds for all $g$ with a minimal syllable representative with less then or equal to $k-1$ syllables. As in the first part of the proof, take $g \in A(\Gamma)$ with $w = x_1^{e_1} \dots x_k^{e_k} $ a $k$-syllable normal form representative for $g$. Write $\phi(w)$ as $g_1 \ldots g_k$ and suppose that $x_i^{e_i} \overset{m}{\prec} x_j^{e_j}$ as syllables of $g$. If $j \le k-1$ then we may apply the induction hypothesis to a prefix of $w$ and conclude $A^w(x_i^{e_i} )\prec A^w(x_j^{e_j})$. More precisely, let $w'$ be the word formed by the first $k-1$ syllables of $w$; this is a normal form word for some $g' \in A(\Gamma)$. By the induction hypothesis $A^{w'}(x_i^{e_i} )$ and $A^{w'}(x_j^{e_j})$ overlap and $A^{w'}(x_i^{e_i} )\prec A^{w'}(x_j^{e_j})$. This suffices since for $l\le k-1$ we have $A^{w}(x_l^{e_l})= A^{w'}(x_l^{e_l})$, using the obvious identification of the syllables of $w'$ with those of $w$.

Otherwise, $j=k$ and by definition of $\overset{m}{\prec}$ we may choose $w\in \text{Min}(g)$ so that $w = ax_i^{e_i}x_k^{e_k}$ and so $\phi(w) = \phi(a)g_ig_k$. Since $x_i^{e_i} \overset{m}{\prec} x_k^{e_k}$, $B_i$ and $B_k$ overlap and so $\phi(a)g_iB_i = \phi(a)B_i$  and $\phi(a)g_iB_k$ also overlap. We have
\begin{eqnarray*}
d_{A^g(x_k^{e_k})} (A^g(x_i^{e_i}), \phi(g)T) 
&=& d_{\phi(a)g_iB_k}(\phi(a)B_i, \phi(a)g_ig_kT) \\
&=& d_{B_k}(B_i, g_kT) \\
&\ge&  d_{B_k}(T, g_kT) - d_{B_k}(B_i, T) \\
&\ge& d_{A_{J(k)}}(T,f^{e_k}_{J(k)}T)  - d_{A_{J(k)}}(A_{J(i)}, T) \\
&\ge& 2K - K  \\
&\ge& M+1,
\end{eqnarray*}
and so since $A^g(x_i^{e_i}), A^g(x_k^{e_k}) \in \Omega(K,T,\phi(g)T)$, by Proposition \ref{factororder}
$$A^w(x_i^{e_i} )\prec A^w(x_k^{e_k}) .$$
\end{proof}

\section{The lower bound on distance for admissible systems}\label{lower}
Take $K \ge 5M +3L$ as in Theorem \ref{factordistance}.

Let $\mathcal{A} = (\{A_i\}, \{f_i\})$ be an admissible system satisfying the hypotheses of Theorem \ref{factordistance} for $T \in \mathcal{K}^0$. For $g \in A(\Gamma)$ and $w \in \text{Min}(g)$ write in normal form 
$$w = x_1^{e_1} \ldots x_k^{e_k}.$$
We make use of the notation introduced at the beginning of the previous section.

Set $T' = \phi(g)T$ and choose a geodesic $T =T_0, T_1, \ldots, T_N = T'$ in the $1$-skeleton of  $\mathcal{K}_n$. Similar to \cite{MM2}, we define the subinterval $I_A =[a_A,b_A] \subset [0,N]$ associated to the free factor $A \in \Omega(K,T,T')$ as follows: Set
$$a_A = \max\{k\in \{0, \ldots, N\}: d_A(T,T_k)\le 2M+L \} $$
and 
$$b_A =  \min\{k\in \{a_A, \ldots, N\}: d_A(T_k,T')\le 2M+L \}.$$
Since $A \in \Omega(K,T,T')$, $d_A(T,T')\ge K \ge 5M+3L$ and so both $a_A$ and $b_A$ are well-defined and not equal. Hence, the interval $I_A$ is nonempty and for all $k \in I_A$, 
$$d_A (T_k,T) \ge 2M+1 \quad \mathrm{and} \quad d_A(T_k,T') \ge 2M+1.$$
This uses that fact that the projection from $\K^0$ to $\mathcal{F}(A)$ is $L$-Lipschitz. The next lemma shows that if syllables are ordered, then distance in their associated free factors cannot be made simultaneously. 

\begin{lemma} \label{order}
With notation fixed as above, if $x_i^{e_i}, x_j^{e_j} \in \syl(w)$ and $x_i^{e_i} \prec x_j^{e_j} $ then $$I_{A^w(x_i^{e_i})} < I_{A^w(x_j^{e_j})}.$$
That is, the intervals are disjoint and correctly ordered in $[0,N]$. 
\end{lemma}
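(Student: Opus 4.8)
The plan is to reduce the general case to the special case of \emph{consecutive} syllables under $\overset{m}{\prec}$, and then handle that case using the Behrstock-type inequality of Proposition~\ref{B} together with Theorem~\ref{factordistance}(2). By Lemma~\ref{trans}, the strict partial order $\prec$ is the transitive closure of $\overset{m}{\prec}$, so if $x_i^{e_i}\prec x_j^{e_j}$ there is a chain $x_i^{e_i}=a_1\overset{m}{\prec}a_2\overset{m}{\prec}\cdots\overset{m}{\prec}a_r=x_j^{e_j}$. Since the relation ``$I_{A^w(\cdot)}<I_{A^w(\cdot)}$'' is evidently transitive (disjointness and correct ordering of intervals in $[0,N]$ compose), it suffices to prove the lemma when $x_i^{e_i}\overset{m}{\prec}x_j^{e_j}$, i.e. the two syllables are adjacent in some normal form. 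So assume this for the rest of the argument, and write $A=A^w(x_i^{e_i})$, $B=A^w(x_j^{e_j})$.

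By Theorem~\ref{factordistance}(2), $A$ and $B$ overlap and $A\prec B$ in the ordering on $\Omega(K,T,T')$ of Section~7. By Proposition~\ref{factororder} (using $K\ge 2M+1$ and that $A,B\in\Omega(K,T,T')$ by Theorem~\ref{factordistance}(1)), $A\prec B$ is equivalent to $d_A(T,B)\ge M+1$ and to $d_A(T',B)\le M$, and symmetrically $d_B(T,A)\le M$ while $d_B(T',A)\ge M+1$. The goal is to show $b_A<a_B$, i.e. that the interval for $A$ lies entirely to the left of the interval for $B$ in $[0,N]$. First I would show that for every $k\in I_B$ one has $d_A(T,T_k)\le 2M+L$: indeed $d_A(T_k,T')\le 2M+1$ for $k\in I_B$ is false in general, but for $k\in I_B$ we do have $d_B(T,T_k)\ge 2M+1$; combined with $d_B(T,A)\le M$ this gives $d_B(A,T_k)\ge M+1$, whence Proposition~\ref{B} (applied to the overlapping factors $A,B$ and the tree $T_k$, whose projections to both are nonempty) forces $d_A(B,T_k)\le M$, and then $d_A(T,T_k)\le d_A(T,B)+d_A(B,T_k)\le (M+?)+M$. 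Here I must be careful with the $\mathrm{diam}$ constant: $d_A(T,B)$ is not a priori $\le M+1$ but Proposition~\ref{factororder}(5) with the roles reversed, or rather the computation $d_A(T',B)\le M$ does not directly bound $d_A(T,B)$; instead the clean bound is $d_A(T,T_k)\le d_A(T,T')+d_A(T',T_k)$ is no good, so I use $d_A(T',B)\le M$ and $d_A(B,T_k)\le M$ to get $d_A(T',T_k)\le 2M+\mathrm{diam}_A(\pi_A(B))\le 2M+4\le 2M+L$. By definition of $b_A$ as a minimum, this shows $b_A\le k$ for all $k\in I_B$, hence $b_A\le a_B$.

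It remains to upgrade $b_A\le a_B$ to the strict inequality $b_A<a_B$, equivalently to rule out $b_A=a_B$. The hard part is exactly this boundary case: a single index $k$ could in principle be the right endpoint of $I_A$ and the left endpoint of $I_B$. Here I would argue that at $k=b_A$ we have $d_A(T_k,T')\le 2M+L$, while at $k=a_B$ we have $d_B(T,T_k)\le 2M+L$; if $b_A=a_B=k$ then $T_k$ is ``close to $T'$ in $A$'' and ``close to $T$ in $B$''. Using $d_A(T,T')\ge K\ge 5M+3L$, closeness of $T_k$ to $T'$ in $\mathcal F(A)$ forces $d_A(T,T_k)\ge K-(2M+L)\ge 3M+2L > 2M+L$, so $k\ne a_A$ and in fact $k>a_A$; symmetrically, $d_B(T',T_k)\ge K-(2M+L)>2M+L$ forces $k<b_B$. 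Now I want a contradiction from the two constraints $d_A(B,T_k)$ small (from $d_A(T',B)\le M$, $d_A(T',T_k)\le 2M+L$, giving $d_A(B,T_k)\le 3M+L$) and $d_B(A,T_k)$ small (similarly $\le 3M+L$), which together violate Proposition~\ref{B} provided $M$ was chosen $\ge 3M+L$ — which is absurd. So the genuinely careful point is the choice of thresholds: I should either define $a_A,b_A$ with a cutoff making the ``both projections small'' case contradict Proposition~\ref{B} directly, or note that the cutoff $2M+L$ in the lemma's setup is already engineered (via $K\ge 5M+3L$) so that at an index lying in \emph{both} $I_A$ and $I_B$ one gets $d_A(B,T_k)\le M$ and $d_B(A,T_k)\le M$ simultaneously from Proposition~\ref{B}, contradicting $\min\{d_A(B,T_k),d_B(A,T_k)\}$ being forced to be $\le M$ only for one of them while the \emph{other} is forced $\ge M+1$ by the order relation transported to $T_k$ (via $d_A(T,B)\ge M+1$, $d_A(T,T_k)$ small when $k\in I_A$ near the $T$-end — but $b_A$ is the $T'$-end, so one must instead use $k\in I_B$ gives $d_B(T,T_k)\ge 2M+1$ hence $d_B(A,T_k)\ge M+1$, while $k\in I_A$ gives $d_A(T',T_k)\ge 2M+1$ hence with $d_A(T',B)\le M$ that $d_A(B,T_k)\ge M+1$; both $\ge M+1$ contradicts Proposition~\ref{B}). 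This last contradiction is clean and is, I expect, the intended argument; the main obstacle is just bookkeeping the constants so that $2M+L$ is the right window to make ``$k\in I_A\cap I_B$'' impossible. Once $b_A<a_B$ is established for the $\overset{m}{\prec}$ case, transitivity of interval ordering and Lemma~\ref{trans} finish the general case, completing the proof.
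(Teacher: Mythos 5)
Your overall strategy is the paper's: reduce to $\overset{m}{\prec}$-adjacent syllables via Lemma \ref{trans} and the transitivity of the interval order, invoke Theorem \ref{factordistance}(2) and Proposition \ref{factororder} to get the four projection inequalities, and feed them into Proposition \ref{B} along the geodesic. Your disjointness argument at the end is correct and clean: an index $k\in I_A\cap I_B$ would force both $d_A(B,T_k)\ge M+1$ and $d_B(A,T_k)\ge M+1$, contradicting Proposition \ref{B}.

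The step that is supposed to produce the \emph{ordering}, however, has a gap. You show that every $k\in I_B$ satisfies $d_A(T_k,T')\le 2M\le 2M+L$ and conclude ``by definition of $b_A$ as a minimum, $b_A\le k$.'' But $b_A$ is defined as a minimum over $\{a_A,\dots,N\}$ only, so this conclusion requires first knowing $k\ge a_A$ --- and nothing you have established guarantees that. The set $\{k: d_A(T,T_k)\le 2M+L\}$ need not be an initial segment of $\{0,\dots,N\}$ (geodesics in $\K_n$ are not known to project to unparametrized quasi-geodesics in $\mathcal{F}(A)$; only folding paths in Outer space have bounded backtracking), so knowing $d_A(T,T_k)>2M+L$ for $k\in I_B$ does not place $k$ to the right of $a_A$. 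Concretely, your two conclusions --- $d_A(T_k,T')\le 2M$ for all $k\in I_B$, and $I_A\cap I_B=\emptyset$ --- are both consistent with the wrong ordering $I_B<I_A$. The fix is to run the symmetric computation over $k\in I_A$, which is what the paper does: for $k\in I_A$ one has $d_A(T_k,T')\ge 2M+1$ and $d_A(B,T')\le M$, hence $d_A(T_k,B)\ge M+1$, hence $d_B(T_k,A)\le M$ by Proposition \ref{B}, hence $d_B(T,T_k)\le 2M$. Since $a_B$ is a maximum over \emph{all} of $\{0,\dots,N\}$ with no range restriction, this immediately gives $k\le a_B$ for every $k\in I_A$, and disjointness then upgrades $b_A\le a_B$ to $b_A<a_B$.
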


\begin{proof}
We first prove the proposition when $x_i^{e_i} \overset{m}{\prec} x_j^{e_j} $. Recall that since $x_i^{e_i} \overset{m}{\prec} x_j^{e_j} $, Theorem \ref{factordistance} implies that the free factors $A^w(x_i^{e_i})$ and $A^w(x_j^{e_j})$ overlap and are ordered, $A^w(x_i^{e_i}) \prec A^w(x_j^{e_j})$. If  $k \in I_{A^w(x_i^{e_i})}$, then $d_{A^w(x_i^{e_i})}(T_k,T') \ge 2M+1$ and since $A^w(x_i^{e_i}) \prec A^w(x_j^{e_j})$ we have $d_{A^w(x_i^{e_i})}(A^w(x_j^{e_j}),T') \le M$. The triangle inequality then implies that
$$d_{A^w(x_i^{e_i})}(T_k, A^w(x_j^{e_j})) \ge M+1.$$

As the free factors $A^w(x_i^{e_i})$ and $A^w(x_j^{e_j})$ overlap, by Proposition \ref{B} we have $$d_{A^w(x_j^{e_j})}(T_k, A^w(x_i^{e_i})) \le M.$$
Combining this with the inequality $d_{A^w(x_j^{e_j})}(A^w(x_i^{e_i}),T) \le M$, again coming from the ordering, provides
$$d_{A^w(x_j^{e_j})}(T, T_k) \le 2M.$$
Since this is true for each $k \in I_{A^w(x_i^{e_i})}$ it follows from the definition of $I_{A^w(x_j^{e_j})}$ that $I_{A^w(x_i^{e_i})} \cap I_{A^w(x_j^{e_j})} = \emptyset$. So if there were an index $k \in I_{A^w(x_i^{e_i})}$ with $k > a_{A^w(x_j^{e_j})}$ then by disjointness of the intervals $a_{A^w(x_i^{e_i})}> a_{A^w(x_j^{e_j})}$. This contradiction the choice of $a_{A^w(x_j^{e_j})}$ as the largest index $k$ with $d_{A^w(x_j^{e_j})}(T,T_k) \le 2M+1$ and shows that the intervals of interest are disjoint and ordered as $I_{A^w(x_i^{e_i})} < I_{A^w(x_j^{e_j})}$.

Now, if more generally we have that  $x_i^{e_i} \prec x_j^{e_j} $, then by Lemma \ref{trans}, $x_i^{e_i}$ and $x_j^{e_j} $ can be joined by a chain of syllables 
$$x_i^{e_i} = a_0 \overset{m}{\prec} a_1 \overset{m}{\prec} \ldots \overset{m}{\prec} a_l = x_j^{e_j} .$$ 
Hence, we conclude 
$$I_{A^w(x_i^{e_i})} < I_{A^w(a_1)}< \ldots < I_{A^w(x_j^{e_j})}, $$
as required.
\end{proof}

Let $s = s(\Gamma)$ be the size of the largest complete subgraph of $\Gamma$. This is also the maximal rank of a free abelian subgroup of $A(\Gamma)$. Note by the definition of an admissible system, $s$ is bounded above by a constant depending only on the rank of $\mathbb{F}$. To simplify notations, associated to the free factor $A^g(x_i^{e_i})$ we set $a_i = a_{A^g(x_i^{e_i})}$ and $b_i = b_{A^g(x_i^{e_i})}$.

\begin{lemma}[Lower bound on distance] \label{lowerbound}
With notation fixes as above, $K$ as in Theorem \ref{factordistance} and $w \in \mathrm{Min}(g)$ in normal form 
$$\sum_{1 \le i\le k} d_{A^g(x_i^{e_i})}(T,\phi(g)T) \le 5sL \cdot d_{\mathcal{K}}(T,\phi(g)T).$$
\end{lemma}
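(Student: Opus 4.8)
The plan is to estimate the contribution of each active free factor $A^g(x_i^{e_i})$ to the length of the chosen geodesic $T=T_0,\ldots,T_N=\phi(g)T$ in $\mathcal{K}^1$, and then add up these contributions with a bounded amount of overlap. The key point is that for each $i$, the interval $I_{A^g(x_i^{e_i})} = [a_i,b_i]$ is a subinterval of $[0,N]$ along which the projection to $\mathcal{F}(A^g(x_i^{e_i}))$ moves a definite amount. Indeed, by the definition of $a_i$ and $b_i$ together with the fact that $\pi_{A^g(x_i^{e_i})}$ is $L$-Lipschitz on $\mathcal{K}^0$, we have $d_{A^g(x_i^{e_i})}(T_{a_i},T_{b_i}) \ge d_{A^g(x_i^{e_i})}(T,\phi(g)T) - 2(2M+L) - 2L \ge d_{A^g(x_i^{e_i})}(T,\phi(g)T) - 4M - 4L$, and since this projection is $L$-Lipschitz, the length $b_i - a_i$ of the interval is at least $\tfrac{1}{L}\big(d_{A^g(x_i^{e_i})}(T,\phi(g)T) - 4M - 4L\big)$. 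Using $d_{A^g(x_i^{e_i})}(T,\phi(g)T) \ge K \ge 5M+3L$, one checks $4M+4L \le \tfrac{4}{5}\,d_{A^g(x_i^{e_i})}(T,\phi(g)T)$ (after possibly enlarging $K$ by a bounded amount, which is harmless), so $b_i - a_i \ge \tfrac{1}{5L}\, d_{A^g(x_i^{e_i})}(T,\phi(g)T)$.

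Next I would control how many of these intervals can overlap at a given index $k \in [0,N]$. Suppose $x_i^{e_i}$ and $x_j^{e_j}$ are two syllables of $g$ with $I_{A^g(x_i^{e_i})} \cap I_{A^g(x_j^{e_j})} \neq \emptyset$. By Lemma \ref{order}, neither $x_i^{e_i}\prec x_j^{e_j}$ nor $x_j^{e_j}\prec x_i^{e_i}$ holds; by the definition of the partial order $\prec$ on $\syl(g)$, this means the two syllables can be swapped, i.e. in some (equivalently every relevant) normal form they occur in either order, which forces their underlying vertex generators to commute in $A(\Gamma)$, so the corresponding vertices span an edge in $\Gamma$. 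Hence any collection of syllables whose intervals share a common index corresponds to vertices of $\Gamma$ that are pairwise joined by edges, i.e. a complete subgraph; such a subgraph has at most $s = s(\Gamma)$ vertices. (One should note that distinct syllables of the same generator cannot have overlapping intervals either, since they are always $\prec$-comparable.) Therefore at most $s$ of the intervals $I_{A^g(x_i^{e_i})}$ contain any fixed $k$, i.e. $\sum_i \mathbf{1}_{I_{A^g(x_i^{e_i})}}(k) \le s$ for every $k$.

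Putting the two estimates together and summing,
\begin{eqnarray*}
\sum_{1 \le i \le k} d_{A^g(x_i^{e_i})}(T,\phi(g)T)
&\le& \sum_{1 \le i \le k} 5L\,(b_i - a_i) \\
&=& 5L \sum_{1 \le i \le k} |I_{A^g(x_i^{e_i})}| \\
&\le& 5L \cdot s \cdot N \\
&=& 5sL \cdot d_{\mathcal{K}}(T,\phi(g)T),
\end{eqnarray*}
since the total overlap count $\sum_i |I_{A^g(x_i^{e_i})}| = \sum_{k=0}^{N-1}\sum_i \mathbf{1}_{I_{A^g(x_i^{e_i})}}(k)$ is at most $sN$ and $N = d_{\mathcal{K}}(T,\phi(g)T)$ as the path was chosen to be geodesic.

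I expect the main obstacle to be the combinatorial step identifying which intervals can overlap: one needs to be careful that the failure of $\prec$-comparability (which is what Lemma \ref{order} gives from overlapping intervals) really does translate, via the \cite{CLM}/\cite{HerM} normal form theory, into commutation of the corresponding standard generators, and hence into a clique in $\Gamma$ and a bound by $s(\Gamma)$. The Lipschitz/length bookkeeping in the first paragraph is routine provided one is willing to absorb the additive constants $4M+4L$ into the lower bound $K$ (one may need to slightly increase the threshold $K$ in Theorem \ref{factordistance}, but only by a constant depending on $M$ and $L$, which does not affect the statement). Everything else is a direct application of Lemma \ref{order}, Proposition \ref{B}, and the definitions of $a_A, b_A$.
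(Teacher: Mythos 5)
Your proof is correct and follows essentially the same route as the paper: disjointness of the intervals for $\prec$-ordered syllables via Lemma \ref{order}, the clique bound $s$ for pairwise unordered syllables (which is exactly how the paper bounds the overlap), and the Lipschitz bookkeeping converting interval lengths into projection distances. The only cosmetic difference is the spurious extra $2L$ in your triangle inequality, which forces you to enlarge $K$ to $5M+5L$; using $d_A(T,\phi(g)T)\le d_A(T_{a_i},T_{b_i})+2(2M+L)$ directly, the argument closes with $K\ge 5M+3L$ exactly as stated, so no enlargement is needed.
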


\begin{proof}
Since $A^g(x_i^{e_i}) \in \Omega(K,T,\phi(g)T)$ for all $x_i^{e_i} \in \syl(g)$ by Theorem \ref{factordistance}, we have the collection of nonempty subintervals $\{I_{A^g(x_i^{e_i})}: 1 \le i \le k \}$ of $\{0,1, \ldots, N\}$. If, for $i \le j$, it is the case that $x_i^{e_i} \prec x_j^{e_j}$ then by Lemma \ref{order}, $I_{A^g(x_i^{e_i})}$ and $I_{A^g(x_j^{e_j})}$ are ordered and, hence, disjoint.  Further, any collection of syllables pairwise unordered by $\prec$ has size bounded above by $s$. This is clear since such a collection of syllables can be commuted to be consecutive in $w$ using move $(3)$ and so correspond to distinct pairwise commuting standard generators. We conclude that for no more then $s$ indices $i = 1, \ldots, k$ is any integer in $\{0,1, \ldots, N\}$ contained in $I_{A^g(x_i^{e_i})}$. Hence,
$$\sum_{1 \le i \le k} | b_i - a_i | \le s \cdot d_{\mathcal{K}}(T, \phi(w)T) .$$

Using the Lipschitz condition on the projections and the triangle inequality,
\begin{eqnarray*}
d_{A^g(x_i^{e_i})}(T, \phi(g)T) &\le& d_{A^g(x_i^{e_i})}(T_{a_i}, T_{b_i})+ 4M +2L \\
&\le& L|b_i - a_i| +4M +2L.
\end{eqnarray*}
Since for each $A \in \Omega(K,T,\phi(g)T)$, $d_A(T,\phi(g)T)\ge K \ge 5M+3L$ we have $|b_A - a_A| \ge \frac{M+L}{L}$. This implies that $d_A(T,\phi(g)T) \le 5L \cdot |b_A - a_A|$ and so putting this with the inequality above
$$\sum_{1 \le i\le k} d_{A^g(i)}(T,\phi(g)T) \le 5sL \cdot d_{\mathcal{K}}(T,\phi(g)T),$$
as required.
\end{proof}

\section{The quasi-isometric embedding}
We can now prove Theorem \ref{main}.

\begin{theorem}\label{main}
Given an admissible collection $\mathcal{A}$ of free factors for $\mathbb{F}_n$ with coincidence graph $\Gamma$ there is a $C \ge 0$ so that if outer automorphism $\{f_i\}$ are chosen making $\mathcal{S} = (\mathcal{A},\{f_i\})$ an admissible system with $\ell_{A_i}(f_i) \ge C$ then the induced homomorphism $\phi= \phi_{\mathcal{S}}: A(\Gamma) \to \Out(\mathbb{F}_n)$ is a quasi-isometric embedding.
\end{theorem}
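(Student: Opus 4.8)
The plan is to prove the two required quasi-isometry inequalities separately, with the upper bound being routine and the lower bound being the substantive part. For the upper bound, note that $\phi$ maps each standard generator $s_i$ to $f_i \in \Out(\F_n)$, so for any $g \in A(\Gamma)$ with normal form $w = x_1^{e_1}\cdots x_k^{e_k}$ the element $\phi(g)$ is a product of $\sum_i |e_i|$ generators of $\Out(\F_n)$ (in the fixed generating set coming from the \v{S}varc--Milnor identification of $\Out(\F_n)$ with $\mathcal{K}_n$). Since word length in $A(\Gamma)$ with respect to the standard generators is $\sum_i |e_i|$ for a normal form word, this gives $d_{\mathcal{K}}(T,\phi(g)T) \le C_1 \, \|g\|_{A(\Gamma)}$ for a constant $C_1$ depending only on the $f_i$; this is the easy direction and requires nothing beyond the \v{S}varc--Milnor lemma recalled in Section~\ref{basics}.

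For the lower bound, I would first fix $C = 2K$ where $K \ge 5M + 3L$ is the constant furnished by Theorem~\ref{factordistance} (applied to the chosen basepoint $T \in \mathcal{K}^0$), so that the translation-length hypothesis $\ell_{A_i}(f_i) \ge C$ puts us in a position to apply that theorem. Given $g \in A(\Gamma)$ with normal form $w = x_1^{e_1}\cdots x_k^{e_k}$, Theorem~\ref{factordistance}(1) gives
\[
d_{A^g(x_i^{e_i})}(T,\phi(g)T) \ge K|e_i| \qquad \text{for each } i,
\]
so that summing over syllables yields
\[
\sum_{1 \le i \le k} d_{A^g(x_i^{e_i})}(T,\phi(g)T) \ge K \sum_{1\le i \le k} |e_i| = K\,\|g\|_{A(\Gamma)},
\]
using that $\sum_i |e_i|$ is the standard word length of $g$. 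On the other hand, Lemma~\ref{lowerbound} bounds the same sum above by $5sL \cdot d_{\mathcal{K}}(T,\phi(g)T)$. Combining the two inequalities gives
\[
d_{\mathcal{K}}(T,\phi(g)T) \;\ge\; \frac{K}{5sL}\,\|g\|_{A(\Gamma)},
\]
which, together with the upper bound and the fact that the orbit map $g \mapsto \phi(g)T$ is a quasi-isometry from $\Out(\F_n)$ to $\mathcal{K}_n$, yields the desired quasi-isometric embedding; additive errors are absorbed in the usual way since $A(\Gamma)$ and $\Out(\F_n)$ are finitely generated.

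The main obstacle is purely bookkeeping at this stage: essentially all the geometric content has already been extracted into Theorem~\ref{factordistance} (the large-projection estimate, built on the Behrstock inequality of Proposition~\ref{B} and the syllable ordering of Section~8) and into Lemma~\ref{lowerbound} (the Masur--Minsky-style comparison of the projection sum with $\mathcal{K}_n$-distance). What remains is to be careful that the constant $K$ chosen in Theorem~\ref{factordistance} depends only on the admissible collection $\mathcal{A}$ and on the basepoint $T$ (so that $C = 2K$ is a legitimate uniform threshold), to translate between word metric in $A(\Gamma)$ and syllable length of normal forms, and to translate between $\mathcal{K}_n$-distance and $\Out(\F_n)$-word length via \v{S}varc--Milnor. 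I would also remark, as the paper does just before this theorem, that a quasi-isometric embedding of a torsion-free group is automatically injective, so the homomorphism $\phi$ is in fact an isomorphism onto its image $A(\Gamma) \le \Out(\F_n)$.
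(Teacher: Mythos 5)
Your proposal is correct and follows essentially the same route as the paper: fix $C=2K$ with $K$ from Theorem~\ref{factordistance}, get the upper bound from the Lipschitz property of the orbit map, and get the lower bound by combining the per-syllable estimate $d_{A^g(x_i^{e_i})}(T,\phi(g)T)\ge K|e_i|$ with the summed upper bound $\sum_i d_{A^g(x_i^{e_i})}(T,\phi(g)T)\le 5sL\cdot d_{\mathcal{K}}(T,\phi(g)T)$ of Lemma~\ref{lowerbound}, then transfer to $\Out(\F_n)$ via \v{S}varc--Milnor. The bookkeeping points you flag (that $\sum_i|e_i|$ is the standard word length of $g$, and that $K$ depends only on $\mathcal{A}$ and $T$) are exactly the ones the paper relies on.
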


\begin{proof}
Suppose $\mathcal{A}$ is an admissible collection of free factors and $T \in \mathcal{K}^0$. Take $C =2K$, for $K$ as in Theorem \ref{factordistance}. We show that the orbit map $A(\Gamma) \to \mathcal{K}_n^1$ 
$$g \mapsto \phi(g)T$$
is a quasi-isometric embedding, where $A(\Gamma)$ is given the word metric in its standard generators. Since $\Out(\mathbb{F}_n)$ is quasi-isometric to $\mathcal{K}_n^1$, this suffices to prove the theorem. First, recall that the orbit map is Lipschitz, as is any orbit map induced by an isometric action of a finitely generated group on a metric space. Specifically, $d_{\K}(T,\phi(g)T) \le A\cdot |g|$, where $A = \mathrm{max}\{ d_{\K}(T, \phi(s_i)T : 1 \le i\le n\}$ and $s_1, \ldots, s_n$ are the standard generators.

Let $g\in A(\Gamma)$. By Theorem \ref{factordistance}, we know that if $w = x_1^{e_1} \ldots x_k^{e_k} \in \text{Min}(g)$, then
$$d_{A^g(x_i^{e_i})} (T, \phi(g)T) \ge K|e_i|$$
for $1 \le i \le k$. Hence, by Lemma \ref{lowerbound}
\begin{eqnarray*}
|g| &=& \sum_{1\le i \le k} |e_i| \\
&\le& \frac{1}{K} \sum_{1 \le i\le k} d_{A^g(x_i^{e_i})}(T,\phi(g)T) \\
&\le& \frac{5sL}{K} \cdot d_{\mathcal{K}}(T,\phi(g)T).
\end{eqnarray*}

We conclude that for any $g,h \in A(\Gamma)$
\begin{eqnarray*}
\frac{1}{A} d_{\K}(\phi(g)T,\phi(h)T) &=& \frac{1}{A} d_{\K}(T, \phi(g^{-1}h)T) \le |g^{-1}h|  = d_{A(\Gamma)}(g,h) 
\end{eqnarray*}
and
\begin{eqnarray*}
 d_{A(\Gamma)}(g,h)) \le \frac{5sL}{K} \cdot d_{\mathcal{K}}(T,\phi(g^{-1}h))T) = \frac{5sL}{K} \cdot d_{\mathcal{K}}(\phi(g)T,\phi(h))T),
\end{eqnarray*}
as required.
\end{proof}

\section{Appendix: quasi-isometric embeddings into Outer space}\label{appen}
Here we adapt the arguments of the previous sections to show that if $K$ in Proposition \ref{factordistance} is chosen appropriately then the homomorphism induced from an admissible system gives a quasi-isometric orbit map from $A(\Gamma)$ into $\mathcal{X}_n$, (projectivized) Outer space with its Lipschitz metric. Since this section is independent from the rest of the paper, we assume that the reader has some familiarity with the metric properties of $\mathcal{X}_n$. See \cite{FMout, BFhyp, BFproj} for background. Let $d_{\mathcal{X}_n}$ denote the Lipschitz metric on $\mathcal{X}_n$ and for $G \in  \mathcal{X}_n$ and a free factor $A$ of $\F_n$, we use the notation $A|G \in \mathcal{X}(A)$ to denote the core of the cover of $G$ corresponding to $A$. This is the quotient graph of the proper splitting $\pi_{\S(A)}(\tilde{G})$ with edge lengths induced from $G$.

\begin{lemma}\label{Liptoout}
There is an $L$ such that for any $G,G' \in \X_n$ and $A \in \FF_n$
$$d_A(G,G') \le L \cdot d_{\mathcal{X}_n}(G,G') +L. $$
\end{lemma}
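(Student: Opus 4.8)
The plan is to reduce the statement to a comparison between the Lipschitz metric on $\X_n$ and the Lipschitz metric on $\X(A)$, then compose with the (already known) coarse Lipschitz map $\X(A) \to \FF(A)$. Concretely, I would first recall from \cite{FMout} that the map $\pi_{\S(A)}$ realized geometrically — sending $G \in \X_n$ to $A|G$, the core of the $A$-cover with induced edge lengths — is coarsely Lipschitz from $(\X_n, d_{\X_n})$ to $(\X(A), d_{\X(A)})$. This is essentially because an optimal difference of markings $G \to G'$ lifts to a map between the corresponding covers, which in turn restricts to (or is homotopic rel the core to) a map between cores $A|G \to A|G'$; the lift does not increase the Lipschitz constant, and the core is obtained by collapsing non-core forest which only decreases distances in the Lipschitz metric. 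So $d_{\X(A)}(A|G, A|G') \le d_{\X_n}(G,G')$ up to a uniform additive error (or even with no error, depending on how carefully one sets things up).

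Next I would invoke the fact, due to Bestvina–Feighn \cite{BFhyp} (see also \cite{BFproj}), that the projection $\X(A) \to \FF(A)$ sending a point of Outer space of $A$ to the set of free factors arising as vertex groups of one-edge collapses of the corresponding free splitting is coarsely Lipschitz: there is a constant $L_0$, depending only on the rank of $A$ (hence bounded in terms of $n$), so that $d_A(\Sigma, \Sigma') \le L_0 \cdot d_{\X(A)}(\Sigma,\Sigma') + L_0$. Since $\pi_A(G) = \pi(\pi_{\S(A)}(G))$ and $\pi_{\S(A)}(G)$ is exactly the free splitting underlying $A|G$, composing the two coarsely Lipschitz maps gives the desired inequality with $L$ depending only on $n$ (taking the maximum of the constants over the finitely many ranks $\le n$ that can occur).

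The main obstacle I expect is the first step: one must be careful that the geometric realization $G \mapsto A|G$ really is coarsely Lipschitz for the \emph{asymmetric} Lipschitz metric, including a clean statement of what ``core of the cover with induced lengths'' means as a point of $\X(A)$ (it may be necessary to rescale so that the core has volume one, which could introduce a bounded multiplicative distortion that must be absorbed into $L$). I would handle this by citing the relevant estimates in \cite{FMout} on how covers interact with optimal Lipschitz maps, and by noting that rescaling to unit volume changes the Lipschitz metric by a bounded amount since the volume of $A|G$ is pinched between two positive constants times $\mathrm{vol}(G)$. Everything else — the triangle-inequality bookkeeping, boundedness of the diameter of $\pi_A(G)$ already established in Proposition \ref{properties1}, and the reduction to finitely many ranks — is routine.
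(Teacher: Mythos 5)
Your second step is fine: the coarse Lipschitz bound for $\mathcal{X}(A) \to \mathcal{F}(A)$ is Corollary 3.5 of \cite{BFhyp}, and the paper uses exactly that. The gap is in your first step. The map $G \mapsto A|G$, with $A|G$ rescaled to volume one, is \emph{not} coarsely Lipschitz from $(\X_n, d_{\mathcal{X}_n})$ to $(\mathcal{X}(A), d_{\mathcal{X}(A)})$, even in the correct asymmetric order, and the pinching claim you propose to patch this with --- that $\mathrm{vol}(A|G)$ is bounded between two positive constants times $\mathrm{vol}(G)$ --- is false. Concretely, take $\F_3 = \langle a,b,c\rangle$, $A = \langle a,b\rangle$, let $G$ be the rose with petal lengths $\ell(a)=\delta$, $\ell(b)=1/2$, $\ell(c)=1/2-\delta$, and let $G'$ be the rose with the same marking and lengths $\ell(a)=\delta$, $\ell(b)=\delta$, $\ell(c)=1-2\delta$. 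The linear map on petals has Lipschitz constant exactly $2$, so $d_{\mathcal{X}_3}(G,G') \le \log 2$ for every small $\delta$. But $A|G$ is the $(\delta,\,1/2)$-rose and $A|G'$ is the $(\delta,\,\delta)$-rose, so after normalizing to volume one the loop $a$ has length about $2\delta$ in $A|G$ and length $1/2$ in $A|G'$, whence $d_{\mathcal{X}(A)}(A|G, A|G') \ge \log\frac{1}{4\delta} \to \infty$. The underlying issue is that the Lipschitz metric only bounds the ratios $\ell_{G'}(\alpha)/\ell_G(\alpha)$ from above: loops of $A$ can shrink arbitrarily, so $\mathrm{vol}(A|G')$ can be arbitrarily small compared to $\mathrm{vol}(A|G)$, and the renormalization then distorts the shape of the core unboundedly. (In this example $d_A(G,G')$ in $\mathcal{F}(A)$ is still small, so the lemma survives --- the extra $\mathcal{X}(A)$-distance is ``thin part'' distance invisible in the free factor complex --- but your factorization through $\mathcal{X}(A)$ does not prove it.)

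The paper's proof runs your argument only where it is valid. It takes a geodesic folding path $G_t$ from $G$ to $G'$ and invokes Lemma 3.1 of \cite{BFproj}, which produces a subinterval $[\beta,\gamma)$ on which $A|G_t$ is (after rescaling and reparameterizing) itself a folding path in $\mathcal{X}(A)$ with the volume ratio $\mathrm{Vol}(A|G_\gamma)/\mathrm{Vol}(A|G_\beta)$ bounded by a constant depending only on $n$; on that interval one gets $\log d_{\mathcal{X}(A)}(A|G_\beta, A|G_\gamma) \le D' \log d_{\mathcal{X}_n}(G_\beta,G_\gamma)$ essentially as you envision. Crucially, the same lemma says that each complementary subpath has projection to $\S(A)$, hence to $\mathcal{F}(A)$, of uniformly bounded diameter, and the triangle inequality assembles the final bound. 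To repair your proposal you need this restriction to the active interval of a folding path (or an equivalent bounded-backtracking statement for the inactive part); the direct composition of two globally coarse Lipschitz maps is not available.
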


\begin{proof}
This follows essentially from Lemma $3.1$ of \cite{BFproj}. We provide some details using the notation found there. Let $\phi: G \to G'$ be an optimal map with induced path $G_t$, $t\in [\alpha,\omega]$. Since paths within a simplex of $\mathcal{X}_n$ do not affect the splitting, and so do not affect the projection to $\mathcal{F}(A)$, we assume that $G_t$ is a folding path. By Lemma $3.1$ of \cite{BFproj}, there is a subinterval $[\beta, \gamma) \subset [\alpha,\omega]$ so that for $t \in [\beta,\gamma)$,  $A|G_t$ has bounded volume and (after rescaling and reparameterizing) is a folding path in $\mathcal{X}(A)$. Moreover, the image in $\mathcal{X}_n$ of each component of the complement of $[\beta,\gamma)$ in $[\alpha,\omega]$ has projection diameter in $\S(A)$ bounded by $D$, a constant depending only on $n$. 

By definition of a folding path and properties of the subinterval $[\beta, \gamma)$, there is an optimal map $\phi_{\beta\gamma}:G_{\beta} \to G_{\gamma}$ which lifts to a map $\phi_A: A|G_{\beta} \to A|G_{\gamma}$. Hence,
\begin{eqnarray*}
\log d_{\mathcal{X}(A)}(A|G_{\beta},A|G_{\gamma}) &\le& \mathrm{Lip} \left( \phi_A: \frac{ A|G_{\beta}}{\mathrm{Vol} ( A|G_{\beta})}\to \frac{ A|G_{\gamma}}{\mathrm{Vol} ( A|G_{\gamma})} \right) \\
&\le& \frac{\mathrm{Vol} ( A|G_{\gamma})}{\mathrm{Vol} ( A|G_{\beta})}\cdot \mathrm{Lip}(\phi_A:A|G_{\beta} \to A|G_{\gamma}) \\
&\le& \frac{\mathrm{Vol} ( A|G_{\gamma})}{\mathrm{Vol} ( A|G_{\beta})}\cdot \mathrm{Lip}(\phi_{\beta\gamma}:G_{\beta} \to G_{\gamma} ) \\
&=& \frac{\mathrm{Vol} ( A|G_{\gamma})}{\mathrm{Vol} ( A|G_{\beta})}\cdot \log d_{\mathcal{X}_n}(G_{\beta}, G_{\gamma} ),
\end{eqnarray*}
where the ratio $\frac{\mathrm{Vol} ( A|G_{\gamma})}{\mathrm{Vol} ( A|G_{\beta})}$ is bounded by some constant $D'$ depending only on $n$. By Corollary 3.5 of \cite{BFhyp}, the projection $\mathcal{X}(A) \to \mathcal{F}(A)$ is $L'$-Lipschitz. We conclude
\begin{eqnarray*}
d_A(G,G') &\le& d_A(G,G_{\beta}) + d_A(G_{\gamma}, G') + d_A(G_{\beta},G_{\gamma})\\
&\le& 8+2D + L' \cdot d_{\mathcal{X}(A)}(G_{\beta},G_{\gamma}) \\
&\le& 8+2D+ L'e^{D'}  \cdot d_{\mathcal{X}_n}(G_{\beta}, G_{\gamma}) \\
&\le& 8+2D+ L'e^{D'}  \cdot d_{\mathcal{X}_n}(G, G').
\end{eqnarray*}
Setting $L = \max\{8+2D, L'e^{D'} \}$ completes the proof.

\end{proof}

Let $\mathcal{A}$ be an admissible collection of free factors with coincidence graph $\Gamma$ and for $T \in \K_n$, let $K$ be as in Theorem \ref{factordistance} where $L$ is taken from Lemma \ref{Liptoout}. By choosing outer automorphisms $\{f_i\}$ so that $\mathcal{S} = (\mathcal{A},\{f_i\})$ an admissible system with $\ell_{A_i}(f_i) \ge 2K$  the induced homomorphism $\phi: A(\Gamma) \to \Out(\mathbb{F}_n)$ is a quasi-isometric embedding by Theorem \ref{main}. This implies that $\phi$ is injective and permits us to identify $A(\Gamma)$ with its image in $\Out(\F_n)$. Fix $G \in \mathcal{X}_n$ so that the universal cover $\tilde{G}$ is conjugate as a free splitting to $T$. 

Recalling the notation of Section \ref{lower} for $g \in A(\Gamma)$ and $w \in \mathrm{Min}(g)$, set $G' = gG$ and choose a unit speed geodesic $G_t$,  $t\in [0,d_{\mathcal{X}_n}(G,G')]$, joining $G$ and $G'$ that is a folding path in $\X_n$. Define the subinterval $I_A =[a_A,b_A] \subset [0,d_{\mathcal{X}_n}(G,G')]$ associated to the free factor $A \in \Omega(K,G,G')$ as follows:
$$a_A = \sup\{t \in [0,d_{\mathcal{X}_n}(G,G']: d_A(G,G_t)\le 2M+L \} $$
and 
$$b_A =  \inf\{t \in [a_A,d_{\mathcal{X}_n}(G,G']: d_A(G_t,G')\le 2M+L \}.$$
Observe that for all $t \in I_A$, $d_A (T_t,T), d_A(T_t,T') \ge 2M+1$ since the projection from $\mathcal{X}_n$ to $\mathcal{F}(A)$ is $L$-Lipschitz. 

\begin{remark}
Bestvina and Feighn have show that the projection of a folding path in $\mathcal{X}_n$ to the free splitting complex (or free factor complex) of a free factor is an unparameterized quasi-geodesic with uniform constants \cite{BFproj}. This implies that the projection of a folding path has bounded backtracking and so outside of the interval $I_A$ the projection of the folding path $G_t$ to $\mathcal{F}(A)$ makes uniformly bounded progress.
\end{remark}

With this setup, the proof of Lemma \ref{order} follow exactly as stated when using the intervals $I_A$ for $A \in \Omega(K,G,gG)$ defined in this section. Alternatively, the ordering follows from the remark above. Once this is established, Lemma \ref{lowerbound} follows for the intervals of this section with $L$ as in Lemma \ref{Liptoout}. We can now prove the following:

\begin{theorem}
Given an admissible collection $\mathcal{A}$ of free factors for $\mathbb{F}_n$ with coincidence graph $\Gamma$ there is a $C \ge 0$ so that if outer automorphism $\{f_i\}$ are chosen making $\mathcal{S} = (\mathcal{A},\{f_i\})$ an admissible system with $\ell_{A_i}(f_i) \ge C$ then the induced homomorphism $\phi= \phi_{\mathcal{S}}: A(\Gamma) \to \Out(\mathbb{F}_n)$ is a quasi-isometric embedding and the orbit map into Outer space is a quasi-isometric embedding.
\end{theorem}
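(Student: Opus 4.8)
The plan is to re-run the proof of Theorem \ref{main} with $\mathcal{K}_n$ replaced by $\mathcal{X}_n$, taking $C = 2K$ where $K$ is the constant of Theorem \ref{factordistance} associated to the value of $L$ furnished by Lemma \ref{Liptoout}. The assertion that $\phi$ is a quasi-isometric embedding into $\Out(\F_n)$ is exactly Theorem \ref{main}; in particular $\phi$ is injective and we identify $A(\Gamma)$ with its image. Recall that $G \in \mathcal{X}_n$ has been fixed with $\tilde G$ conjugate as a free splitting to $T$, and consider the orbit map $g \mapsto gG$.

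For the coarse upper bound I would use the usual \u{S}varc--Milnor--type estimate: writing $g$ as a word $s_{i_1}^{\varepsilon_1}\cdots s_{i_m}^{\varepsilon_m}$ of length $|g|$ in the standard generators and invoking $\Out(\F_n)$-equivariance of the Lipschitz metric together with its triangle inequality, one gets $d_{\mathcal{X}_n}(G, gG) \le \sum_{j=1}^{m} d_{\mathcal{X}_n}(G, f_{i_j}^{\varepsilon_j} G) \le A'|g|$, where $A' = \max_i d_{\mathcal{X}_n}(G, f_i^{\pm 1}G)$.

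For the lower bound, the key point is that the projection of a point of $\mathcal{X}_n$ to a free factor complex depends only on its underlying free splitting, so $d_A(G, gG) = d_A(T, \phi(g)T)$ for every free factor $A$ of $\F_n$; hence Theorem \ref{factordistance}(1) gives $d_{A^g(x_i^{e_i})}(G, gG) \ge K|e_i|$ for each syllable $x_i^{e_i}$ of $w \in \mathrm{Min}(g)$, and $\fact(g) \subset \Omega(K, G, gG)$. As indicated in the discussion preceding the statement, Lemma \ref{order}, and then Lemma \ref{lowerbound}, carry over word for word once the geodesic in $\mathcal{K}_n^1$ is replaced by the chosen folding path $G_t$ and the $L$-Lipschitz projection $\pi_A \colon \mathcal{K}_n^1 \to \mathcal{F}(A)$ is replaced by the coarsely Lipschitz map of Lemma \ref{Liptoout} (with the Bestvina--Feighn bounded-backtracking property of \cite{BFproj} supplying the needed control of the projection of $G_t$ outside $I_A$); this yields $\sum_i d_{A^g(x_i^{e_i})}(G, gG) \le 5sL \cdot d_{\mathcal{X}_n}(G, gG)$, after enlarging $K$ to absorb the additive term of Lemma \ref{Liptoout} exactly as is done for $K$ in Theorem \ref{factordistance}. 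Combining,
$$|g| = \sum_i |e_i| \le \frac{1}{K}\sum_i d_{A^g(x_i^{e_i})}(G, gG) \le \frac{5sL}{K}\, d_{\mathcal{X}_n}(G, gG).$$

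To finish I would apply both estimates to $x = g^{-1}h$ and use $d_{\mathcal{X}_n}(gG, hG) = d_{\mathcal{X}_n}(G, g^{-1}hG)$, which follows from $\Out(\F_n)$ acting by isometries of the Lipschitz metric, to obtain
$$\frac{1}{A'}\, d_{\mathcal{X}_n}(gG, hG) \le d_{A(\Gamma)}(g,h) \le \frac{5sL}{K}\, d_{\mathcal{X}_n}(gG, hG),$$
which is the claimed quasi-isometric embedding of orbit maps. The only genuinely new ingredient beyond the proof of Theorem \ref{main} is that the interval/ordering machinery of Section \ref{lower} must be re-run along a folding path using the weaker projection estimate of Lemma \ref{Liptoout} in place of an honestly Lipschitz projection along a combinatorial geodesic; since that is precisely what Lemma \ref{Liptoout} and the preceding remark were set up to supply, I expect the main subtlety to be bookkeeping — in particular keeping the asymmetry of the Lipschitz metric consistent, so that every distance estimate is taken in the form $d_{\mathcal{X}_n}(G,\,\cdot\, G)$, which the $\Out(\F_n)$-equivariance converts into the symmetric-looking statement above for $g,h$.
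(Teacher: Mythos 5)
Your proposal is correct and follows essentially the same route as the paper's appendix: fix $C=2K$ with $K$ from Theorem \ref{factordistance} using the $L$ of Lemma \ref{Liptoout}, observe that the projections $\pi_A$ only see the underlying free splitting so Theorem \ref{factordistance} applies verbatim, and re-run the interval/ordering argument of Section \ref{lower} along a folding path with the coarse Lipschitz estimate (or bounded backtracking) in place of the $L$-Lipschitz projection from $\K_n^1$. Your extra care with the asymmetry of the Lipschitz metric and with taking $\max_i d_{\mathcal{X}_n}(G, f_i^{\pm 1}G)$ for the upper bound is a harmless refinement of what the paper does.
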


\begin{proof}
The only statement yet to be proven is that the orbit map into $\mathcal{X}_n$ is a quasi-isometric embedding. Fix $G \in \mathcal{X}_n$ and $K$ as determined in this section; set $C = 2K$. We have the orbit map $A(\Gamma) \to \mathcal{X}_n$ given by 
$$g \mapsto g G.$$
For the standard generators $s_1, \ldots, s_n$ of $A(\Gamma)$ set $A = \mathrm{max}\{d_{\mathcal{X}_n}(G,s_iG): 1\le i\le n) \}$. As in the proof of Theorem \ref{main} for any $g \in A(\Gamma)$, $d_{\mathcal{X}_n}(G,gG) \le A \cdot |g|$ where $|g|$ is the length of $g$ in the standard generators. Hence, the orbit map is $A$-Lipschitz.

Let $g\in A(\Gamma)$. By Theorem \ref{factordistance}, for $w = x_1^{e_1} \ldots x_k^{e_k} \in \text{Min}(g)$,
$$d_{A^g(x_i^{e_i})} (G, gG) \ge K|e_i|$$
for $1 \le i \le k$. Hence, by the $\mathcal{X}_n$-version of Lemma \ref{lowerbound}
\begin{eqnarray*}
|g| &=& \sum_{1\le i \le k} |e_i| \\
&\le& \frac{1}{K} \sum_{1 \le i\le k} d_{A^g(x_i^{e_i})}(G,gG) \\
&\le& \frac{5sL}{K} \cdot d_{\mathcal{X}_n}(G,gG),
\end{eqnarray*}
where $L$ is as in Lemma \ref{Liptoout}. This completes the proof.

\end{proof}

\bibliographystyle{amsalpha}

\bibliography{RAAGsandOut.bbl}

\end{document}